\providecommand*{\shuffle}{%
  \mathbin{\mathpalette\shuffle@{}}%
}
\newcommand*{\shuffle@}[2]{%
  \sbox0{$#1\vcenter{}$}%
  \kern .15\ht0 
  \rlap{\vrule height .25\ht0 depth 0pt width 2.5\ht0}%
  \raise.1\ht0\hbox to 2.5\ht0{%
    \vrule height 1.75\ht0 depth -.1\ht0 width .17\ht0 %
    \hfill
    \vrule height 1.75\ht0 depth -.1\ht0 width .17\ht0 %
    \hfill
    \vrule height 1.75\ht0 depth -.1\ht0 width .17\ht0 %
  }%
  \kern .15\ht0 
}
\DeclareMathOperator{\inv}{inv}
\DeclareMathOperator{\ind}{ind}
\DeclareMathOperator{\per}{per}
\DeclareMathOperator{\Fac}{Fac}
\DeclareMathOperator{\val}{val}
\DeclareMathOperator{\N}{\mathbb{N}}
\newcommand{\qbin}[2]{\binom{#1}{#2}_{\!\! q}}
\declaretheorem[numberwithin=section]{theorem}
\declaretheorem[sibling=theorem]{lemma,corollary,proposition}
\declaretheorem[sibling=theorem,style=definition]{example,definition,remark}
\declaretheorem[sibling=theorem,style=definition,refname={fact,facts},Refname={Fact,Facts}]{fact}
\declaretheoremstyle[
    headfont=\normalfont\itshape, 
    bodyfont = \normalfont,
    qed=$\blacksquare$, 
    headpunct={:}]{claimproofstyle} 
\crefname{equation}{}{}
\title{Introducing $q$-deformed binomial coefficients of words}
\author{Antoine Renard}
\author{Michel Rigo\thanks{Supported by the FNRS Research grant T.196.23 (PDR), ORCID 0000-0001-7463-8507}}
\author{Markus A. Whiteland\thanks{Supported by the FNRS Research grant 1.B.466.21F, ORCID 0000-0002-6006-9902.}}
\affil{Department of Mathematics, University of Li\`ege, Li\`ege, Belgium}
\affil{\texttt{\{antoine.renard,m.rigo,mwhiteland\}@uliege.be}}
\date{}
\begin{document}
\maketitle            
\begin{abstract}
Gaussian binomial coefficients are $q$-analogues of the binomial coefficients of integers. On the other hand,
binomial coefficients have been extended to finite words, i.e., elements of a finitely generated free monoid. In this
paper we bring these two notions together by introducing $q$-analogues of binomial coefficients of words. We
study their basic properties, e.g., by extending classical formulas such as the $q$-Vandermonde and Manvel--Meyerowitz--Schwenk--Smith--Stockmeyer identities to our setting. These $q$-deformations contain much richer
information than the original coefficients. From an algebraic perspective, we introduce a $q$-shuffle
and a family of $q$-infiltration products for non-commutative formal power series. Finally, we apply our results to
generalize a theorem of Eilenberg characterizing so-called $p$-group languages. We show that a language is of this type if and only if it is a Boolean combination of specific languages defined through $q$-binomial coefficients seen as polynomials over $\mathbb{F}_p$.
\end{abstract}


Keywords: binomial coefficients of words, $q$-deformations, $q$-binomials, $p$-group languages, formal series.

2000 Mathematics Subject Classification: 05A30, 68R15, 68Q70.
\section{Introduction}

When defining what is called a {\em $q$-analogue} of a counting function, one gets a polynomial in the variable $q$ with non-negative integer coefficients which reduces to the original function when $q$ tends to $1$. To be a useful deformation, one seeks to satisfy variations or
adaptations of some, if not all, of the algebraic properties of the original function. For instance, $q$-deformations are useful in constructing
generating functions or in obtaining new combinatorial identities. They find applications in physics and mathematics, in particular in number
theory (e.g.,~Hurwitz polyzeta functions \cite{Duchamp2017}). To get a glimpse of the $q$-mathematics, let us also mention quantum algebras that
are $q$-deformations of Lie algebras with important applications in mathematical physics. See, for instance, \cite{Andrews}. In particular, we were inspired by the work of Morier-Genoud and Ovsienko on $q$-deformed rationals, continued fractions, and reals \cite{MRO,MRO2}. Their recursive formula is analogous to the $q$-deformed Pascal identity for the Gaussian binomial coefficients, but they replace the Pascal triangle with the Farey graph. Building on this work $q$-rational and $q$-real binomial coefficients were considered in \cite{machacek2023qrational}.

Binomial coefficients of finite words have a central position in combinatorics on words  \cite{Dudik,LejeuneRosenfeld,ManvelMSSS1991reconstruction,RigoSalimov} and formal language theory. A celebrated theorem of Eilenberg, also credited to Sch\"utzenberger, \cite[Thm.~VIII.10.1]{Eilenberg1976} provides a characterization of $p$-group languages using these binomial coefficients. One of the main applications of our work is to obtain a generalization of this result working with the $q$-analogues of these coefficients. This opens the way to new perspectives in the study of these languages and of the groups themselves, with numerous applications \cite{Pin1,Pin2,ReutenauerFree}. For instance, relying on a systematic use of the binomial coefficients, the authors of \cite{Pin3} describe the two classes of languages recognized by the dihedral group $D_4$ and the quaternion group $Q_8$.

 \subsection{Gaussian binomial coefficients}
The Gaussian binomial coefficients are $q$-analogues of the binomial coefficients. They are polynomials in the variable~$q$ defined as
\[
\qbin{m}{r}
= 
\frac{(1-q^m)(1-q^{m-1})\cdots(1-q^{m-r+1})} {(1-q)(1-q^2)\cdots(1-q^r)},
\]
where $m\ge r\ge 0$ are integers. There are several combinatorial properties associated to Gaussian binomial coefficients. For example,
the coefficient of the monomial $q^i$ is the number of partitions of $i$ whose Ferrers diagram\footnote{A finite collection of unit squares called cells, arranged in an array of left-justified rows, with the row lengths in non-increasing order from top to bottom.} (or, Young tableau) fits in a $r \times (m-r)$ box. From the above definition, one gets for instance MacMahon's $q$-Catalan numbers that give the major index generating function on Dyck paths \cite{Haglund}.

\subsection{Binomial coefficients of words} 
We refer the reader to \cite{Lothaire1997} for a classical textbook on combinatorics on words. An alphabet $A$ is a finite set and a word is a finite sequence of elements in $A$. The free monoid $A^*$ is the set of words over $A$ equipped with the concatenation product and the empty word $\varepsilon$ is the identity element. We let $|x|$ denote the length of the word $x$.
The (classical) binomial coefficient of two words $u$ and $v$ counts how many times $v$ occurs as a subword of $u=a_1a_2\cdots a_n$, $a_i\in A$:
\[
\binom{u}{v}=\#\left\{i_1<\cdots <i_{|v|} \mid a_{i_1}a_{i_2}\cdots a_{i_{|v|}}=v \right\}.
\]
Its $q$-analogue is the main object of study of this paper. As we will see, this polynomial provides extra information about where those subwords occur. As a trivial example when $v$ is reduced to a single letter, the binomial coefficient simply counts the number of occurrences of this letter while its $q$-analogue is a polynomial where the monomials give the positions of these occurrences (starting from $0$ and counted from the right). For example,
\[
\binom{01001201021}{0}=5 \quad\text{ and }\quad\qbin{01001201021}{0}=q^{10}+q^8+q^7+q^4+q^2.
\]
As the reader may notice, evaluating the latter polynomial at $q=1$ gives back the value of the classical binomial coefficient of words. 

The fact that $q$-binomial coefficients contain more information than merely counting subwords is exemplified when considering the famous problem
of reconstructing a word from some of its binomial coefficients (see, for instance, \cite{ManvelMSSS1991reconstruction}). As an example, the two
words $u=0110$ and $v=1001$ are said to be $2$-binomially equivalent \cite{RigoSalimov}, i.e., $\binom{u}{x}=\binom{v}{x}$ for all words $x$ of
length at most $2$. Hence, the sole knowledge of these coefficients is not enough to uniquely determine the word. With their $q$-deformation
counterparts, the knowledge of $\qbin{u}{0}=q^3+1$ and $\qbin{u}{1}=q^2+q$ completely determines the word~$u$. In general, a word $u\in A^*$ is uniquely determined by the polynomials $(\qbin{u}{a})_{a\in A}$, see \cref{cor:reconstruction}.

In this paper, we define $q$-analogues of binomial coefficients of words \cref{eq:recdef} based on an analogue of a Pascal's identity satisfied by Gaussian binomial coefficients. We will see that most of the properties and identities of binomial coefficients of words can be adapted. This also holds for a $q$-analogue of the shuffle of two words which is a formal polynomial. 

When binomial coefficients of words are encountered, it is quite usual in the classical theory to study Parikh matrices because they contain such coefficients. In a series of papers first initiated by E{\u{g}}ecio{\u{g}}lu \cite{Eeciolu2004,Egecioglu}, $q$-analogues of Parikh matrices (whose upper diagonal entries are binomial coefficients of words) have been considered. Indeed, the $q$-counting found in \cite{Eeciolu2004} is the same as ours (up to a convenient power of $q$). However, ours is more natural due to the many links with Gaussian binomial coefficients established here.  
The polynomials that appear in \cite{Egecioglu} as coefficients of these matrices are related to a given subword $v=a_1\cdots a_k$ where $a_i\in A$. Each factorization of $u=u_1a_1u_2\cdots u_ka_ku_{k+1}$ where $u_i\in A^*$ provides the polynomial related to $v$ with a monomial $\prod_{i=1}^kq^{|u_i|_{a_i}}$ where $|u_i|_{a_i}$ is the number of letters $a_i$ occurring in $u_i$. This clearly cannot be compared with our contribution. 

\subsection{Our contributions}
This paper is organized as follows. In \cref{sec:2}, we define $q$-binomial coefficients of words. In \cref{sec:combinter}, we provide a combinatorial interpretation of these $q$-deformations. With \cref{thm:powers} we determine how each specific occurrence of $v$ as a subword of $u$ contributes to $\qbin{u}{v}$. This result has several important corollaries. In particular, we obtain a $q$-Vandermonde identity expressing $\qbin{xy}{u}$ as a sum of terms involving products of the form $\qbin{x}{u_1}\qbin{y}{u_2}$ where $u_1u_2$ is a factorization of $u$. In \cref{sec:5} we reconsider several classical identities in the context of $q$-binomial coefficients of words.

We then study algebraic properties of $q$-deformations in the next two sections. In \cref{sec:def}, we focus on studying $q$-deformations of the shuffle and infiltration products. Both these notions speak about formal polynomials, i.e., applications from $A^*\times A^*$ to
$\mathbb{N}[q]\langle A^*\rangle$. 
 In the literature, $q$-shuffle algebras were independently studied by Rosso \cite{RossoCRAS,Rosso} and by Green~\cite{Green} in a quite general setting (we refer the reader to the cited articles for details).
For our shuffle operation, the proposed generalization has good properties: this product is associative and verifies in particular a relation of the form $ \langle A^*\shuffle_q u, w\rangle = \qbin{w}{u}$.
For the infiltration product, we consider a family of definitions; we show that an analogue to the Chen--Fox--Lyndon relation~\eqref{eq:cfl}, see \cite[Chap.~6]{Lothaire1997},  cannot be attained.

We conclude this paper with \cref{sec:pgroup}. Let $p$ be a prime.
Since $q$-binomials are polynomials in $\mathbb{Z}[q]$, their coefficients may be reduced modulo~$p$ and then, these polynomials may be reduced modulo some non zero polynomial~$\mathfrak{M}$ in $\mathbb{F}_p[q]$. The languages of the form
\[
L_{v,\mathfrak{R},\mathfrak{M}}:=\left\{u\in A^*\mid \qbin{u}{v}\equiv \mathfrak{R}\pmod{\mathfrak{M}}\right\}
\]
where $\mathfrak{R}\in\mathbb{F}_p[q]$ is a non-constant polynomial of degree less than $\deg(\mathfrak{M})$ play a central role in our paper.
Linked to the notion of \emph{$k$-binomial equivalence} \cite{Eilenberg1976,RigoSalimov} (see \cite{LejeuneRosenfeld} for relationship with nilpotent groups), we say that two finite words $w_1,w_2\in A^*$ are {\em $(u,\mathfrak{M})$-binomially equivalent}  whenever, for all factors $v$ of $u$, 
    \[
    \qbin{w_1}{v}\equiv\qbin{w_2}{v}\pmod{\mathfrak{M}}.
    \]
    We study this equivalence relation and determine when it is a congruence and when the corresponding quotient of $A^*$ is a group. With \cref{thm:group_qinvertible} we get information about its order. From this, we generalize Eilenberg's theorem: a language is a $p$-group language if and only if it is a Boolean combination of languages of the form $L_{v,\mathfrak{R},a(q-1)^d}$. Indeed, our construction shows that the languages occurring in the classical formulation of Eilenberg's result are a disjoint union of some languages $L_{v,\mathfrak{R},a(q-1)^d}$.
    
\section{Introducing \texorpdfstring{$q$}{q}-deformations}\label{sec:2}

We now define the main object of the paper. We do this by generalizing the following Pascal--like formula which holds for the usual binomial coefficients of words: for all words $u$,
$v \in A^*$ and letters $a,b \in A$, we have
\begin{equation}\label{eq:conventional-pascal}
\binom{ua}{vb} = \binom{u}{vb} + \delta_{a,b}\binom{u}{v},
\end{equation}
where $\delta_{a,b}$ is the Kronecker delta on the letters of $A$.
For a reference on binomial coefficients of words, see \cite{Lothaire1997}. 
It is therefore natural to propose the following definition.
\begin{definition}
  We recursively define the {\em $q$-deformation}
  $\qbin{\cdot}{\cdot}$ --- an element of $\N[q]$ --- of the binomial coefficients on $A^* \times A^*$ as follows. For all words $u$,
  $v \in A^*$ and letters $a,b \in A$:
  \begin{equation}
    \label{eq:recdef}
    \qbin{u}{\varepsilon} = 1, \quad \qbin{\varepsilon}{v} = 0 \text{ if }v \neq \varepsilon, \quad \text{and } \quad \qbin{ua}{vb} = \qbin{u}{vb}\cdot q^{|vb|} + \delta_{a,b}\qbin{u}{v}.
  \end{equation}
\end{definition}

We note that the polynomial $\qbin{u}{v}$ evaluated at $1$ gives back the usual binomial coefficient 
$\binom{u}{v}$; this follows immediately from \eqref{eq:conventional-pascal} and the definition above.
Let $k,\ell$ be integers. In the case of a unary alphabet, we have
\[\qbin{a^k}{a^\ell} =\qbin{k}{\ell} \]
where on the right-hand side we have the Gaussian binomial coefficient of
integers. This easily follows from the fact that an analogue of Pascal's
identity exists:
\begin{equation}\label{eq:q-bin-Pascal}
\qbin{k+1}{\ell+1} = \qbin{k}{\ell+1}\cdot q^{\ell+1} + \qbin{k}{\ell}.
\end{equation}
  For arbitrary alphabets we observe that if $|u|<|v|$, then $\qbin{u}{v} = 0$; indeed this follows from the given definition straightforwardly. Moreover, if $|u| = |v|$ then $\qbin{u}{v} = 0$ if and only if $u\neq v$.
Furthermore, we have $\qbin{u}{u}=1$ for all words $u$. In fact, using these observations as the base cases for a double induction
on $|u|$ and $|v|$, we have
\begin{lemma}\label{lem:q-coefficient-zero}
For $u,v \in A^*$, we have that $\qbin{u}{v}$ is the zero polynomial if and only if $v$ does not appear as a subword in $u$.
\end{lemma}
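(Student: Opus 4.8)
The plan is to avoid manipulating the recursion \eqref{eq:recdef} directly and instead reduce the claim to a statement about the classical coefficient $\binom{u}{v}$ by specialising $q$ to $1$. The key point is that $\qbin{u}{v}$ has \emph{nonnegative} integer coefficients, so it can be the zero polynomial only when the sum of its coefficients---namely its value at $q=1$---vanishes; this is what makes the passage through $q=1$ lossless.

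First I would record that $\qbin{u}{v}\in\N[q]$. This is immediate by induction on $|u|$ from \eqref{eq:recdef}: the base cases produce the polynomials $0$ and $1$, and the inductive step builds $\qbin{ua}{vb}$ from $\qbin{u}{vb}$ and $\qbin{u}{v}$ using only multiplication by the monomial $q^{|vb|}$ (whose coefficient is $1$) and addition, both of which preserve nonnegativity of the coefficients. Next, as already observed after \eqref{eq:recdef}, evaluating at $q=1$ gives back the classical coefficient, so the value of $\qbin{u}{v}$ at $q=1$ equals $\binom{u}{v}$.

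The one genuine ingredient is the elementary fact that a polynomial $P\in\N[q]$ is the zero polynomial if and only if $P(1)=0$: writing $P=\sum_i c_i q^i$ with each $c_i\ge 0$, we have $P(1)=\sum_i c_i$, and a sum of nonnegative integers vanishes exactly when every summand does. Applying this to $P=\qbin{u}{v}$ yields the equivalence that $\qbin{u}{v}$ is the zero polynomial if and only if $\binom{u}{v}=0$.

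Finally I would invoke the defining description of the classical coefficient: $\binom{u}{v}$ counts the tuples $i_1<\cdots<i_{|v|}$ spelling $v$ as a subword of $u$, so $\binom{u}{v}=0$ holds exactly when no such tuple exists, i.e.\ exactly when $v$ is not a subword of $u$. Chaining the two equivalences completes the proof. I do not expect a serious obstacle here; the only subtlety is the nonnegativity of the coefficients of $\qbin{u}{v}$, without which the specialisation at $q=1$ could lose information. (The double induction on $|u|$ and $|v|$ suggested in the text is an alternative route, but it essentially re-establishes the subword-counting behaviour of $\binom{u}{v}$ directly at the level of the $q$-deformation.)
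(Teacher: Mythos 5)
Your proof is correct, but it takes a different route from the paper. The paper proves the lemma by a double induction on $|u|$ and $|v|$ carried out directly on the recursion \eqref{eq:recdef}, using as base cases the observations that $\qbin{u}{v}=0$ whenever $|u|<|v|$, that for $|u|=|v|$ one has $\qbin{u}{v}=0$ if and only if $u\neq v$, and that $\qbin{u}{u}=1$; the combinatorial content (occurrence of $v$ as a subword of $u$) is thus re-derived inside the $q$-deformed setting. You instead factor the argument through the specialisation $q=1$: an easy induction shows $\qbin{u}{v}\in\N[q]$, the paper's own remark after \eqref{eq:recdef} gives $\qbin{u}{v}(1)=\binom{u}{v}$, a polynomial with nonnegative coefficients vanishes identically exactly when its value at $1$ vanishes, and $\binom{u}{v}=0$ holds by definition exactly when $v$ is not a subword of $u$. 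Both arguments are sound. Yours is shorter and correctly isolates the one point where something could go wrong (positivity of the coefficients, without which the passage through $q=1$ would lose information in one direction); it also delegates all combinatorics to the classical coefficient, which the paper takes as a definition. The paper's induction is more self-contained and, as a by-product, records the auxiliary facts about the cases $|u|<|v|$ and $|u|=|v|$, which are reused as standing observations later in the text; your route does not produce these. Note also that the explicit expansion of \cref{thm:powers} would make the lemma immediate, but since that theorem appears only in the following section, neither you nor the paper may use it here, and neither does.
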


\begin{remark}
  Instead of processing letters starting from the right, we could replace \cref{eq:recdef} with
  \[
  \qbin{au}{bv} = \qbin{u}{bv}\cdot q^{|bv|} + \delta_{a,b}\qbin{u}{v}
  \]
  and this leads to another polynomial with similar properties. As an example, take $u=01101$ and $v=01$. With \cref{eq:recdef}, the $q$-binomial is $q^6+q^5+q^3+1$ and if we process letters from the left, we would get $q^6+q^3+q+1$. We refer the reader to \cref{cor:reversal} where we deal with reversals.
\end{remark}

\begin{example}\label{exa:first}
 Let us apply the above definition to get 
  \[\qbin{0100110}{011}=q^{10}+q^9+q^6+q^4+q^3.\]
The computation is depicted in \cref{fig:tree}. If a node has two children, they correspond to the two terms in \eqref{eq:recdef}.  In that case, the multiplication by the convenient power of the variable $q$ is the label of the left edge. If a node has a single child, this means that the last letter of the two arguments are distinct. The curly bracket on a leaf is obtained by multiplying the labels of the edges going back to the root.%

  \begin{figure}[h!tb]
  \centering
  \begin{tikzpicture}
[
    level 2/.style = {sibling distance = 6cm},
    level 3/.style = {sibling distance = 4cm},
    level 4/.style = {sibling distance = 3cm},
    level 5/.style = {sibling distance = 1.5cm}
] 
\node {$\qbin{0100110}{011}$}
child {node {$\qbin{010011}{011}$}
  child {node {$\qbin{01001}{011}$}
    child {node {$\qbin{0100}{011}$}
      child {node {$\qbin{010}{011}=0$} edge from parent node [left] {$q^3$}}
      edge from parent node [left] {$q^3$}}
   child {node {$\qbin{0100}{01}$}
     child {node {$\qbin{010}{01}$}
       child {node {$\underbrace{\qbin{01}{01}=1}_{q^{10}}$} edge from parent node [left] {$q^2$}}
       edge from parent node [left] {$q^2$}}}
    edge from parent node [left] {$q^3$}}
  child {node {$\qbin{01001}{01}$}
    child {node {$\qbin{0100}{01}$}
      child {node {$\qbin{010}{01}$}
        child {node {$\underbrace{\qbin{01}{01}=1}_{q^9}$}
        edge from parent node [left] {$q^2$}}
        edge from parent node [left] {$q^2$}}
      edge from parent node [left] {$q^2$}}
    child {node {$\qbin{0100}{0}$}
      child {node {$\qbin{010}{0}$}
        child {node {$\qbin{01}{0}$}
          child {node {$\underbrace{\qbin{0}{0}=1}_{q^6}$}
          edge from parent node [left] {$q$}}
          edge from parent node [left] {$q$}}
        child {node {$\underbrace{\qbin{01}{\varepsilon}=1}_{q^4}$}}
      edge from parent node [left] {$q$}}
      child {node {$\underbrace{\qbin{010}{\varepsilon}=1}_{q^3}$}}
    }}
  edge from parent node [left] {$q^3$}};
\end{tikzpicture}
  \caption{Applying recursively the definition \cref{eq:recdef} to compute $\qbin{0100110}{011}$.}\label{fig:tree}
\end{figure}
Evaluating the above expression at $q=1$, we get the value of the classical binomial coefficient of words
\[\binom{0100110}{011}=5.\]
The non-zero coefficients of the $q$-deformation provide extra information about the occurrences of subwords as we will see in the next section.
\end{example}

\section{Combinatorial interpretation}\label{sec:combinter}

The exponents occurring in a $q$-deformed binomial provide extra data compared with the classical binomial coefficients of words. Each occurrence of $v$ as a subword of $u$ provides a term of the form $q^\alpha$ in $\qbin{u}{v}$ where $\alpha$ is the sum over all letters of $v$ of the number of letters at the right of them and not being part of that specific occurrence of the subword $v$.
The following theorem makes this observation precise.

\begin{theorem}\label{thm:powers} Let $u$ be a word over $A$, $k\ge 0$, and $a_1,\ldots,a_k\in A$. Then
  \[\qbin{u}{a_1\cdots a_k}=\sum_{\substack{u_0,u_1,\ldots,u_k\in A^*\\ u=u_0a_1\cdots u_{k-1}a_k u_k}} q^{\sum_{i=1}^k i|u_i|}.\]
\end{theorem}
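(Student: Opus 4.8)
The plan is to prove the identity by induction on $|u|$, mirroring the recursive structure of the definition \eqref{eq:recdef}. Write $R(u;a_1\cdots a_k)$ for the right-hand side, i.e. the sum of $q^{\sum_{i=1}^k i|u_i|}$ over all factorizations $u=u_0a_1u_1\cdots a_ku_k$; the goal is to show $\qbin{u}{a_1\cdots a_k}=R(u;a_1\cdots a_k)$ for all $k$ and all choices of $a_1,\dots,a_k$. First I would dispose of the two base cases. When $k=0$ (so the target is $\varepsilon$) there is a single factorization $u=u_0$ with empty exponent sum, giving $R=1=\qbin{u}{\varepsilon}$. When $u=\varepsilon$ and $k\ge 1$ no factorization exists, so $R=0=\qbin{\varepsilon}{a_1\cdots a_k}$. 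These agree with the first two clauses of \eqref{eq:recdef}.

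For the inductive step I would write $u=u'c$ with $c\in A$, assume the claim for $u'$ against every target word, and show that $R$ obeys exactly the recursion defining $\qbin{\cdot}{\cdot}$, namely
\[
R(u'c;a_1\cdots a_k)=R(u';a_1\cdots a_k)\cdot q^{k}+\delta_{c,a_k}\,R(u';a_1\cdots a_{k-1}).
\]
Once this is established, applying the induction hypothesis to both terms on the right and invoking \eqref{eq:recdef} (with $|a_1\cdots a_k|=k$) closes the argument. To prove the displayed recursion I would partition the factorizations $u=u_0a_1u_1\cdots a_ku_k$ according to the role of the trailing letter $c$ of $u$. Either $c$ lies inside the last gap $u_k$, so $u_k\neq\varepsilon$; deleting it yields a bijection with the factorizations of $u'$ against $a_1\cdots a_k$. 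Or $u_k=\varepsilon$, which forces the last matched letter $a_k$ to coincide with $c$; this case is vacuous unless $c=a_k$, and removing the trailing $a_k$ yields a bijection with the factorizations of $u'$ against $a_1\cdots a_{k-1}$.

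The only delicate point is tracking how the exponent transforms under these two bijections, and I would spell it out explicitly. In the first case a factorization of $u$ has exponent $\sum_{i=1}^{k-1}i|u_i|+k(|u_k|-1)+k$, whose first two summands are precisely the exponent of the corresponding factorization of $u'$; summing over this class therefore produces the factor $q^{k}$ in front of $R(u';a_1\cdots a_k)$. In the second case the empty gap $u_k$ contributes $k\cdot 0=0$, so the exponent agrees verbatim with that of the corresponding factorization of $u'$ against $a_1\cdots a_{k-1}$, yielding the term $\delta_{c,a_k}\,R(u';a_1\cdots a_{k-1})$. Beyond this index bookkeeping the argument is entirely formal: the split by the last letter is exactly what \eqref{eq:recdef} encodes, so the two recursions are forced to coincide, and I expect no substantive obstacle.
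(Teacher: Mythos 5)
Your proof is correct and follows essentially the same route as the paper's: induction on $|u|$ by peeling off the last letter, splitting the factorizations of $u'c$ according to whether $u_k\neq\varepsilon$ (yielding the factor $q^k$) or $u_k=\varepsilon$ (yielding the Kronecker-delta term), and matching the result against the recursion \eqref{eq:recdef}. The exponent bookkeeping you spell out is exactly the computation in the paper's displayed chain of equalities.
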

As an example, we get
\[
\qbin{0110011}{01}=q^{10}+q^9+q^6+q^5+q^3+2 q^2+q,
\]
and we have to consider all factorizations of $0110011$ of the form $u_0\, 0\, u_1\, 1\, u_2$:
\[
\begin{array}{c|cccccccc}
  (u_1,u_2) & (\varepsilon,10011)& (1,0011) & (1100,1)&(11001,\varepsilon)&( 0,1)&( 01,\varepsilon)&( \varepsilon,1)&( 1,\varepsilon)\\
    \hline
    |u_1|+2 |u_2| & 10 & 9 & 6 & 5 & 3 & 2 & 2 & 1 \\
  \end{array}
  \]

  \begin{proof}
    The theorem is true for $k=0$, because the summation has only one term, namely $q^0$. The theorem is also true for $u=\varepsilon$ and any $k\geq 1$ as the sum is empty. Assume the theorem holds true for some $u\in A^*$ and all $k \geq 0$. Let $b \in A$ and
    $a_1$, \ldots, $a_k \in A$ with $k \geq 1$. Then
    \begin{align*}
      \sum_{\substack{u_0,u_1,\ldots,u_k\in A^*\\ ub=u_0a_1\cdots u_{k-1}a_k u_k}} \!\!\!\!\! q^{\sum_{i=1}^k i|u_i|}
      &=  \sum_{\substack{u_0,u_1,\ldots,u_k\in A^*, u_k\neq\varepsilon \\ ub=u_0a_1\cdots u_{k-1}a_k u_k}} q^{\sum_{i=1}^k i|u_i|} +  \sum_{\substack{u_0,u_1,\ldots,u_{k-1}\in A^* \\ ub=u_0a_1\cdots u_{k-1}a_k}} q^{\sum_{i=1}^{k-1} i|u_i|}\\
      &=  \sum_{\substack{u_0,u_1,\ldots,u_{k-1},u_k'\in A^*\\ u=u_0a_1\cdots u_{k-1}a_k u_k'}} \!\! q^{k+\sum_{i=1}^k i|u_i|} +\delta_{a_k,b}  \sum_{\substack{u_0,u_1,\ldots,u_{k-1}\in A^* \\ u=u_0a_1\cdots u_{k-1}}} q^{\sum_{i=1}^{k-1} i|u_i|}\\
      &= \qbin{u}{a_1\cdots a_k}q^k+\delta_{a_k,b} \qbin{u}{a_1\cdots a_{k-1}}=\qbin{ub}{a_1\cdots a_k}.
    \end{align*}
    To proceed from the first to the second line, in the considered factorizations of $ub$, note that $u_k$ ending with $b$ is of the form $u_k'b$ and this allows the first sum to be rewritten accordingly.  Hence the theorem holds true for $ub$.
  \end{proof}

We get the following straightforward corollary.

\begin{corollary}\label{cor:degg}
Let $u,v$ be words.  If $\binom{u}{v} \neq 0$, consider the leftmost occurrence of $v= a_1\cdots a_k$, $a_i \in A$ in $u = u_0a_1\cdots u_{k-1} a_k u_k$, $u_i \in A^*$, i.e., $u_0a_1\cdots u_{j-1}a_j$ is the shortest prefix of $u$ containing $a_1\cdots a_j$ for each $j=1,\ldots,k$. Then the polynomial $\qbin{u}{v}$ has degree $\sum_{i=1}^{k} i|u_i|$. Furthermore, it  is monic and the non-zero coefficient of the monomial of least degree is $1$.

In particular, the independent (constant) term $\qbin{u}{v}(0)$ equals $1$ if and only if $v$ is a suffix of $u$; otherwise it equals $0$. Similarly 
the degree of $\qbin{u}{v}$ is less than or equal to $|v|(|u|-|v|)$ and the coefficient of the monomial $q^{|v|(|u|-|v|)}$ is $1$ if and only if $v$ is a prefix of $u$; otherwise it equals $0$.
\end{corollary}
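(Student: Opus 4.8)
The plan is to deduce everything directly from \cref{thm:powers}, which gives
\[
\qbin{u}{v}=\sum_{\substack{u_0,\ldots,u_k\in A^*\\ u=u_0a_1\cdots u_{k-1}a_k u_k}} q^{\sum_{i=1}^k i|u_i|},
\]
so each occurrence of $v=a_1\cdots a_k$ in $u$ contributes a single monomial $q^{\alpha}$ with $\alpha=\sum_{i=1}^k i|u_i|$, where $(u_0,\ldots,u_k)$ is the induced factorization of $u$. The whole statement is then a matter of identifying which factorization minimizes, respectively maximizes, this exponent and checking the corresponding coefficient is $1$.

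First I would handle the claim about the degree and leading coefficient. The exponent $\alpha=\sum_{i=1}^k i|u_i|$ is maximized by pushing as many letters as possible into the blocks $u_i$ with large index $i$; concretely, the leftmost occurrence greedily chooses each $a_j$ as early as possible, thereby leaving the maximal number of letters to the right of each chosen position, which simultaneously maximizes each $|u_i|$ for the trailing blocks in the appropriate weighted sense. I expect this to be the main obstacle: one must argue carefully that the greedy leftmost embedding really maximizes the weighted sum $\sum i|u_i|$ and not merely the total $\sum|u_i|$ (which is constant, equal to $|u|-k$, over all occurrences). The clean way is an exchange argument: given any occurrence whose chosen positions are not all leftmost, move the first non-leftmost letter $a_j$ to an earlier admissible position; this strictly increases $\alpha$ because it transfers at least one letter from a block of index $j-1$ to a block of index $\ge j$ while leaving later choices available, so the weight contribution does not decrease and strictly increases at the moved position. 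Uniqueness of the leftmost occurrence then shows the top monomial has coefficient exactly $1$, proving monicity with degree $\sum_{i=1}^k i|u_i|$ for the leftmost factorization.

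Next, by the symmetric (dually greedy, rightmost) argument the \emph{minimal} exponent is attained, and I would show the least exponent is likewise attained by a unique factorization, giving least coefficient $1$; here pushing letters into low-index blocks minimizes the weighted sum, and the rightmost embedding is the unique minimizer by the mirror exchange argument. For the two \emph{In particular} assertions I would specialize. The constant term corresponds to $\alpha=0$, i.e.\ to a factorization with $|u_i|=0$ for all $i\ge 1$, equivalently $u=u_0a_1\cdots a_k$, which says precisely that $v$ is a suffix of $u$; such a factorization is unique when it exists, so $\qbin{u}{v}(0)\in\{0,1\}$ and equals $1$ iff $v$ is a suffix. For the top-degree bound, since $\sum_{i=1}^k |u_i|=|u|-k=|u|-|v|$ and the weights satisfy $i\le k=|v|$, we get $\alpha\le |v|(|u|-|v|)$ with equality iff all of $u$'s surplus letters lie in the last block $u_k$, i.e.\ $u=a_1\cdots a_k u_k$, which means $v$ is a prefix of $u$; again this factorization is unique, so the coefficient of $q^{|v|(|u|-|v|)}$ is $1$ exactly in that case and $0$ otherwise.

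Finally I would note that the prefix case is also an immediate instance of the degree formula: if $v$ is a prefix then the leftmost occurrence has $u_0=\cdots=u_{k-1}=\varepsilon$ and $|u_k|=|u|-|v|$, so $\sum i|u_i|=k|u_k|=|v|(|u|-|v|)$, consistent with the general leading-monomial claim. Thus all four statements follow from \cref{thm:powers} once the extremality and uniqueness of the leftmost and rightmost factorizations are established; the only genuinely nontrivial point is the weighted exchange argument, and everything else is bookkeeping.
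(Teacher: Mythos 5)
Your proof is correct and takes essentially the same approach as the paper: both deduce everything from \cref{thm:powers} and argue that the leftmost (resp.\ rightmost) occurrence is the unique maximizer (resp.\ minimizer) of the weighted exponent $\sum_{i} i|u_i|$, your exchange argument being the contrapositive form of the paper's induction showing that any maximizing factorization must coincide with the leftmost one. One cosmetic slip: $\sum_{i=1}^{k}|u_i| = |u|-|v|-|u_0| \le |u|-|v|$ rather than equality in general, but since your equality analysis already forces $u_0=\varepsilon$, the bound and the prefix characterization go through unchanged.
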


\begin{proof}
  Let $u=u_0'a_1 \cdots u_{k-1}'a_ku_k'$ be a factorization such that the expression $\sum_{i=1}^k i|u_i'|$ is maximal. We show that $u_i=u_i'$ for all $i\in\{0,\ldots,k\}$. Hence there is a unique occurrence of $v$ providing a monomial of highest degree.

  Clearly, $|u_0|=|u_0'|$ because if $|u_0'|>|u_0|$ then 
$u_0'a_1u_1'=u_0a_1u_1''$ for some $u_1''$ longer than $u_1'$ and $|u_1''|+\sum_{i=2}^k i|u_i'|>\sum_{i=1}^k i|u_i'|$ contradicting maximality. Proceed by induction. Assume $u_i=u_i'$ for all $i<\ell$ and prove similarly that $|u_\ell|=|u_\ell'|$.
  
  For the monomial of least degree, consider the rightmost occurrence of $v$. The second part of the statement is now immediate.
\end{proof}

In the next statement, in the $q$-analogue formula for binomial coefficients of words, we have here a correcting power of $q$ that does not appear in the classical setting of coefficients of words~\cite[Cor.~6.3.7]{Lothaire1997}. Such a correcting power already appears in the {\em $q$-Vandermonde identity} which is a $q$-analogue of the Vandermonde identity:
\[
\qbin{m + n}{k} =\sum_{j} q^{j(m-k+j)} \qbin{m}{k - j} \qbin{n}{j},
\]
where the nonzero contributions to this sum come from values of $j$ such that $\max(0, k-m)\le j \le min(n, k)$, see for instance \cite{Stanley}. The reader may also see \cite{machacek2023qrational} for the Chu--Vandermonde identity in the context of $q$-real binomial coefficients.

\begin{corollary}\label{cor:xy-u}
For all words $x,y \in A^*$ and letters $a_1,\ldots,a_k\in A$, we have
 \[
 \qbin{xy}{a_1\cdots a_k}=\sum_{j=0}^k q^{ j (|y|-k+j)} \qbin{x}{a_1\cdots a_j}\qbin{y}{a_{j+1}\cdots a_k}.
\]
\end{corollary}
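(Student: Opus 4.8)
The plan is to deduce the identity directly from the combinatorial formula of \cref{thm:powers}, rather than from the recursive definition \cref{eq:recdef}. Applying \cref{thm:powers} to the word $xy$, I obtain $\qbin{xy}{a_1\cdots a_k}$ as a sum of $q^{\sum_{i=1}^k i|u_i|}$ over all factorizations $xy = u_0 a_1 u_1 \cdots u_{k-1} a_k u_k$. The key idea is to sort these factorizations according to where the boundary between the $x$-part and the $y$-part falls: for each such occurrence of $a_1\cdots a_k$ there is a unique index $j \in \{0,\ldots,k\}$ such that $a_1,\ldots,a_j$ sit inside $x$ and $a_{j+1},\ldots,a_k$ sit inside $y$.

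For a fixed $j$, the boundary lies inside the gap factor $u_j$, so I would write $u_j = w w'$, where $w$ is the suffix of $x$ following $a_j$ and $w'$ is the prefix of $y$ preceding $a_{j+1}$. This produces a factorization $x = u_0 a_1 \cdots a_j w$ realizing $a_1\cdots a_j$ as a subword of $x$, and a factorization $y = w' a_{j+1} \cdots a_k u_k$ realizing $a_{j+1}\cdots a_k$ as a subword of $y$. Crucially, these two factorizations are completely independent of one another, so the occurrences of $a_1\cdots a_k$ in $xy$ with split index $j$ are in bijection with pairs consisting of one occurrence in $x$ and one in $y$.

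The heart of the argument is a bookkeeping computation on the exponent. Splitting $\sum_{i=1}^k i|u_i|$ at the index $j$ and substituting $|u_j| = |w| + |w'|$, I would show that it equals $E_x + E_y + j\bigl(|w'| + \sum_{\ell=j+1}^{k}|u_\ell|\bigr)$, where $E_x = \sum_{i=1}^{j-1} i|u_i| + j|w|$ is the exponent attached to the $x$-occurrence and $E_y = \sum_{\ell=1}^{k-j}\ell|u_{j+\ell}|$ is the exponent attached to the $y$-occurrence by \cref{thm:powers}. The main obstacle, and really the only subtle point, is to recognize that this leftover cross term does not depend on the particular occurrence: reading off the length of $y = w' a_{j+1} u_{j+1}\cdots a_k u_k$ gives $|y| = |w'| + (k-j) + \sum_{\ell=j+1}^{k}|u_\ell|$, hence $|w'| + \sum_{\ell=j+1}^{k}|u_\ell| = |y| - (k-j)$, and the cross term collapses to the constant $j(|y| - k + j)$.

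Once this is settled, I would regroup the sum by $j$, factor out the constant $q^{j(|y|-k+j)}$, and use the bijection together with \cref{thm:powers} applied separately to $x$ and to $y$ to recognize the inner double sum as the product $\qbin{x}{a_1\cdots a_j}\,\qbin{y}{a_{j+1}\cdots a_k}$. This yields exactly the stated identity, with the correcting power of $q$ arising precisely from the way the boundary cuts through the factor $u_j$. An alternative route by induction on $|y|$ using \cref{eq:recdef} is available, but matching the power-of-$q$ shift at each recursive step is more delicate, so I expect the direct combinatorial count above to be both cleaner and less error-prone.
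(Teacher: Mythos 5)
Your proof is correct and follows essentially the same route as the paper: both apply \cref{thm:powers} to $xy$, sort the factorizations by the split index $j$, and observe that the leftover cross term in the exponent collapses to the constant $j(|y|-k+j)$ because the gap lengths in $y$ sum to $|y|-(k-j)$. Your write-up merely makes explicit the bijection (splitting $u_j$ at the boundary) and the exponent bookkeeping that the paper's two-line computation leaves implicit.
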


\begin{proof}
  We apply \cref{thm:powers}:
  \begin{eqnarray*}
    \qbin{xy}{a_1\cdots a_k}&=& \sum_{j=0}^k \sum_{\substack{x=x_0a_1\cdots x_{j-1}a_jx_j,\\ y=y_0a_{j+1}\cdots y_{k-j-1}a_ky_{k-j}}} q^{\sum_{i=1}^ji|x_i|+\sum_{i=0}^{k-j}(i+j)|y_i|}\\
                            &=& \sum_{j=0}^k q^{j (|y|-k+j)} \sum_{x=x_0a_1\cdots x_{j-1}a_jx_j} q^{\sum_{i=1}^ji|x_i|} \sum_{ y=y_0a_{j+1}\cdots y_{k-j-1}a_ky_{k-j}} q^{\sum_{i=0}^{k-j}i|y_i|}
  \end{eqnarray*}
and the result follows.
\end{proof}

\begin{remark}\label{rem:xy-u}
  \cref{cor:xy-u} can also be written as follows. For all words $x,y,u \in A^*$, we have
 \[
 \qbin{xy}{u}=\sum_{\substack{u=u_1u_2\\ u_1,u_2 \in A^*}} q^{ |u_1| (|y|-|u_2|)} \qbin{x}{u_1}\qbin{y}{u_2}.
\]
\end{remark}

Proceeding by induction, we find

\begin{corollary}\label{cor:general}
Let $k\ge 2$ and $x_1,\ldots,x_k,u$ be words. We have
 \[
 \qbin{x_1\cdots x_k}{u}=\sum_{\substack{u=u_1\cdots u_k\\ u_i\in A^*}} q^{ \sum_{i=1}^{k-1} |u_i|(|x_{i+1}\cdots x_k|-|u_{i+1}\cdots u_k|)} \qbin{x_1}{u_1}\cdots \qbin{x_k}{u_k}.
 \]
\end{corollary}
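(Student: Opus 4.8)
The plan is to induct on $k$, taking the two-factor identity of \cref{rem:xy-u} as the base case $k=2$. For the inductive step, I would split off the last factor by writing $x_1\cdots x_{k+1} = (x_1\cdots x_k)\,x_{k+1}$ and applying \cref{rem:xy-u} with $x = x_1\cdots x_k$ and $y = x_{k+1}$. This expresses $\qbin{x_1\cdots x_{k+1}}{u}$ as a sum over factorizations $u = v\,u_{k+1}$ of
\[
q^{|v|(|x_{k+1}|-|u_{k+1}|)}\,\qbin{x_1\cdots x_k}{v}\,\qbin{x_{k+1}}{u_{k+1}}.
\]
I would then expand the factor $\qbin{x_1\cdots x_k}{v}$ using the induction hypothesis, which introduces a further sum over factorizations $v = u_1\cdots u_k$. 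Merging the two summations, the intermediate index $v$ disappears and one is left precisely with a sum over all factorizations $u = u_1\cdots u_{k+1}$ into $k+1$ parts, carrying the product $\qbin{x_1}{u_1}\cdots\qbin{x_{k+1}}{u_{k+1}}$ as required.

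The only real content is to check that the exponent of $q$ assembles into the claimed form. After the substitution, the exponent reads
\[
|v|(|x_{k+1}|-|u_{k+1}|) + \sum_{i=1}^{k-1}|u_i|\bigl(|x_{i+1}\cdots x_k|-|u_{i+1}\cdots u_k|\bigr),
\]
where $|v| = \sum_{i=1}^{k}|u_i|$. I would distribute the first term as $\sum_{i=1}^{k}|u_i|(|x_{k+1}|-|u_{k+1}|)$ and, using the empty-product convention, absorb the vanishing summand $|u_k|\bigl(|x_{k+1}\cdots x_k|-|u_{k+1}\cdots u_k|\bigr)=0$ into the second sum so that it too runs from $1$ to $k$. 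Adding the two sums term by term and recognizing that $|x_{i+1}\cdots x_k|+|x_{k+1}| = |x_{i+1}\cdots x_{k+1}|$ and $|u_{i+1}\cdots u_k|+|u_{k+1}| = |u_{i+1}\cdots u_{k+1}|$ then yields $\sum_{i=1}^{k}|u_i|(|x_{i+1}\cdots x_{k+1}|-|u_{i+1}\cdots u_{k+1}|)$, which is exactly the exponent predicted by the statement for $k+1$ factors.

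I expect this exponent bookkeeping to be the main, if mild, obstacle: one must be attentive to the empty-product conventions at the boundary index $i=k$, and to the fact that the length $|v|$ of the first block equals the sum of the first $k$ block lengths so that it distributes correctly across the inner sum. Everything else is a routine reindexing of nested sums, and no properties of the $q$-binomials beyond \cref{rem:xy-u} are needed.
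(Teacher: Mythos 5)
Your proof is correct and matches the paper's intent exactly: the paper gives no details beyond ``Proceeding by induction,'' and the intended argument is precisely yours---induct on $k$ via \cref{rem:xy-u}, splitting off the last factor and merging the nested sums. Your exponent bookkeeping, including the boundary term at $i=k$ and the distribution of $|v|=\sum_{i=1}^{k}|u_i|$, checks out.
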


We let $\widetilde{v}$ denote the reversal of the word $v$. In the next statement, $\binom{u}{v}_{1/q}$ means that the $q$-binomial polynomial is evaluated at $1/q$.
\begin{corollary}\label{cor:reversal}
  Let $u,v$ be words. We have 
  \[\qbin{u}{v}=q^{|v|(|u|-|v|)}\binom{\widetilde{u}}{\widetilde{v}}_{1/q}.\]
  In particular, if $u$ and $v$ are palindromes, then the list of coefficients of $\qbin{u}{v}$ for degrees ranging in $\{0,\ldots,|v|(|u|-|v|)\}$ is a palindrome.
\end{corollary}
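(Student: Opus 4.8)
The plan is to prove the identity $\qbin{u}{v}=q^{|v|(|u|-|v|)}\binom{\widetilde{u}}{\widetilde{v}}_{1/q}$ directly from the combinatorial interpretation of \cref{thm:powers}, since the right-hand side is just a bookkeeping transformation of the exponents on each monomial. First I would fix notation: write $u=b_1\cdots b_n$ and $v=a_1\cdots a_k$, so that $\widetilde{u}=b_n\cdots b_1$ and $\widetilde{v}=a_k\cdots a_1$. By \cref{thm:powers}, each occurrence of $v$ as a subword of $u$ corresponds to a factorization $u=u_0a_1u_1\cdots a_ku_k$ and contributes the monomial $q^{\sum_{i=1}^k i|u_i|}$. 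The idea is to set up a length-preserving bijection between occurrences of $v$ in $u$ and occurrences of $\widetilde{v}$ in $\widetilde{u}$, namely the map that reverses the whole configuration, and then to track exactly how the exponent transforms under this reversal.

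The key computational step is the exponent matching. If the occurrence of $v$ in $u$ has gap-word lengths $|u_0|,|u_1|,\ldots,|u_k|$, then the reversed occurrence of $\widetilde v$ in $\widetilde u$ has gap-word lengths $|u_k|,|u_{k-1}|,\ldots,|u_0|$ read in order, so applying \cref{thm:powers} to $\widetilde u$ and $\widetilde v$ gives the monomial $q^{\sum_{i=1}^k i |u_{k-i}|}$ in $\binom{\widetilde{u}}{\widetilde{v}}$. Evaluating at $1/q$ turns this into $q^{-\sum_{i=1}^k i |u_{k-i}|}$, and multiplying by the prefactor $q^{|v|(|u|-|v|)}=q^{k(n-k)}$ I must check that
\[
k(n-k)-\sum_{i=1}^k i\,|u_{k-i}| \;=\; \sum_{i=1}^k i\,|u_i|,
\]
matching the exponent coming from the occurrence of $v$ in $u$. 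Using $\sum_{i=0}^k |u_i| = n-k$ and reindexing $j=k-i$, the left sum becomes $\sum_{j=0}^{k-1}(k-j)|u_j|$, and the target is $\sum_{i=1}^k i|u_i|$; a direct manipulation with $\sum_j |u_j|=n-k$ confirms the two agree termwise after the prefactor is distributed. This is the only nontrivial arithmetic, and I expect it to be the main obstacle — not because it is deep, but because one must be careful with the reindexing of the gap words and the separate roles of $|u_0|$ and $|u_k|$, which are treated asymmetrically by \cref{thm:powers}.

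With the exponent identity established, summing over all occurrences gives the claimed equality, since the reversal map is a bijection on occurrences and preserves all gap-length data. For the second assertion, suppose $u$ and $v$ are palindromes, so $\widetilde u=u$ and $\widetilde v=v$. Then the identity specializes to $\qbin{u}{v}=q^{|v|(|u|-|v|)}\qbin{u}{v}_{1/q}$; writing $\qbin{u}{v}=\sum_i c_i q^i$ with $d=|v|(|u|-|v|)$ the maximal possible degree, the relation $\sum_i c_i q^i = \sum_i c_i q^{d-i}$ forces $c_i=c_{d-i}$ for every $i$, which is precisely the statement that the coefficient list over degrees $\{0,\ldots,d\}$ is palindromic. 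I would present the first part as the substantive argument and the palindrome corollary as an immediate specialization.
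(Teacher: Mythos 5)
Your proposal is correct and takes essentially the same route as the paper: both arguments apply \cref{thm:powers} to the reversal bijection sending the factorization $u=u_0a_1u_1\cdots a_ku_k$ to $\widetilde{u}=\widetilde{u_k}a_k\cdots a_1\widetilde{u_0}$, and then verify the exponent identity $|v|(|u|-|v|)-\sum_{i=1}^k i|u_{k-i}|=\sum_{i=1}^k i|u_i|$ (the paper writes the same computation as $\sum_{i=0}^k(k-i)|u_i|=|v|(|u|-|v|)-\sum_{i=1}^k i|u_i|$). Your explicit derivation of the palindrome consequence from $c_i=c_{d-i}$ is a welcome elaboration of what the paper leaves as immediate.
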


\begin{proof}
  There is a bijection between the set of occurrences of the subword $v=a_1\cdots a_k$ in $u$ and the set of occurrences of $a_k\cdots a_1$ in $\widetilde{u}$. The factorization $u=u_0a_1\cdots u_{k-1}a_{k} u_k$ is associated with the factorization $\widetilde{u}=\widetilde{u_k}a_k\cdots \widetilde{u_1}a_1\widetilde{u_0}$. The latter factorization provides $\binom{\widetilde{u}}{\widetilde{v}}$ with a term $q$ with exponent
  \[\sum_{i=0}^k(k-i)|\widetilde{u}_i|=-\sum_{i=1}^k i|u_i|+k\sum_{i=0}^k |u_i|=-\sum_{i=1}^k i|u_i|+ |v|(|u|-|v|).\]
\end{proof}

\section{Some classical formulas revisited}\label{sec:5}

Dudik and Schulman \cite{Dudik} explicitly gave the following identity which is, for instance, useful to show $k$-binomial equivalence by only considering subwords of length exactly $k$ instead of length at most~$k$. Notice that it implicitly appears first in the work
of Manvel et al.~\cite[Lemma~1]{ManvelMSSS1991reconstruction}. If $|u|\ge k\ge |x|$,
then
\[\binom{|u|-|x|}{k-|x|} \binom{u}{x} = \sum_{t\in A^k}
\binom{u}{t}\binom{t}{x}.\] This relation nicely extends to
$q$-binomial coefficients of words.

\begin{theorem} Let $k$ be an integer, $u,x$ words such that $|u|\ge k\ge |x|$. We have 
\[\qbin{|u|-|x|}{k-|x|} \qbin{u}{x} = \sum_{t\in A^k} \qbin{u}{t}\qbin{t}{x}.\]
\end{theorem}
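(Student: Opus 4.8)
The plan is to prove the identity by a $q$-refined double count, interpreting each $q$-binomial as a generating function of subsets weighted by an inversion statistic. Set $n=|u|$, $m=|x|$, identify the letters of $u$ with the positions $[n]:=\{1,\dots,n\}$ read from the left, and for $T\subseteq[n]$ write $u|_T$ for the subword of $u$ at the positions of $T$. For $S\subseteq V\subseteq[n]$ put $\inv_V(S):=\#\{(p,q):p\in S,\ q\in V\setminus S,\ p<q\}$, the number of positions of $S$ lying to the left of a position of $V\setminus S$. With this notation \cref{thm:powers} reads $\qbin{u}{t}=\sum_{u|_T=t}q^{\inv_{[n]}(T)}$: indeed the exponent $\sum_i i|u_i|$ of an occurrence is precisely the number of non-chosen positions lying to the right of chosen ones. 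The integer Gaussian binomial has the parallel classical description $\qbin{N}{\ell}=\sum_{U\subseteq V,\,|U|=\ell}q^{\inv_V(U)}$ for any linearly ordered $V$ with $|V|=N$, the generating function of Ferrers diagrams in an $\ell\times(N-\ell)$ box recalled in the introduction.

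First I would collapse the sum over $t$. Each $k$-subset $T\subseteq[n]$ determines the unique word $u|_T\in A^k$, so
\[
\sum_{t\in A^k}\qbin{u}{t}\qbin{t}{x}=\sum_{\substack{T\subseteq[n]\\|T|=k}}q^{\inv_{[n]}(T)}\,\qbin{u|_T}{x}.
\]
Expanding $\qbin{u|_T}{x}$ by \cref{thm:powers} \emph{inside} the ordered universe $T$, each occurrence of $x$ in $u|_T$ is a subset $S\subseteq T$ with $u|_S=x$ contributing $q^{\inv_T(S)}$; reordering the sum over the pair $S\subseteq T$ with $S$ outermost turns the right-hand side into
\[
\sum_{\substack{S\subseteq[n]\\u|_S=x}}\ \sum_{\substack{S\subseteq T\subseteq[n]\\|T|=k}}q^{\,\inv_{[n]}(T)+\inv_T(S)}.
\]

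The crux is the exponent identity, valid for all $S\subseteq T\subseteq[n]$:
\[
\inv_{[n]}(T)+\inv_T(S)=\inv_{[n]}(S)+\inv_{[n]\setminus S}(T\setminus S).
\]
This is a bookkeeping of ordered pairs: expanding $\inv_{[n]}(S)$ through $[n]\setminus S=(T\setminus S)\sqcup([n]\setminus T)$ isolates $\inv_T(S)$ plus the pairs with $p\in S,\ q\in[n]\setminus T$, while expanding $\inv_{[n]}(T)$ through $T=S\sqcup(T\setminus S)$ isolates those same pairs plus $\inv_{[n]\setminus S}(T\setminus S)$; the common term cancels. I expect this cancellation to be the one genuinely delicate point, since it requires tracking, for each ordered pair of positions, whether each endpoint lies in $S$, in $T\setminus S$, or outside $T$.

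With the exponent identity the computation closes. For fixed $S$ the inner sum factors, and applying the Gaussian description to the ordered $(n-m)$-element set $[n]\setminus S$ (with $U=T\setminus S$) gives
\[
\sum_{\substack{S\subseteq T\subseteq[n]\\|T|=k}}q^{\inv_{[n]}(T)+\inv_T(S)}=q^{\inv_{[n]}(S)}\!\!\sum_{\substack{U\subseteq[n]\setminus S\\|U|=k-m}}\!\!q^{\inv_{[n]\setminus S}(U)}=q^{\inv_{[n]}(S)}\qbin{n-m}{k-m}.
\]
Summing over all occurrences $S$ of $x$ in $u$ and using \cref{thm:powers} once more to recognise $\sum_S q^{\inv_{[n]}(S)}=\qbin{u}{x}$ yields $\qbin{|u|-|x|}{k-|x|}\qbin{u}{x}$, the left-hand side. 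A direct induction on $|u|$ via \cref{eq:recdef} and the integer Pascal rule \cref{eq:q-bin-Pascal} looks possible but markedly messier, so I would favour this bijective route.
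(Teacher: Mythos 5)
Your proof is correct and takes essentially the same route as the paper: both arguments expand each coefficient via \cref{thm:powers}, group the right-hand sum over $t$ by the occurrence $S$ of $x$ it contains, and recognize the leftover exponent as a term of the Gaussian coefficient $\qbin{|u|-|x|}{k-|x|}$ attached to the complementary positions. Your exponent identity $\inv_{[n]}(T)+\inv_T(S)=\inv_{[n]}(S)+\inv_{[n]\setminus S}(T\setminus S)$ is exactly the paper's ``delicate point'' --- the rewriting of \cref{eq:ddsumi} via the interleaved indices $j_i$ and $m_i$ --- repackaged in subset-and-inversion language, where it becomes a transparent bookkeeping of ordered pairs.
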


\begin{proof}
  Assume that $x$ occurs in $u$, otherwise both sides equal $0$. Fix a specific occurrence of $x=a_1\cdots a_\ell$ in~$u$ and a subword $t=b_1\cdots b_k$ of length $k$ of $u$ containing this particular occurrence of $x$.  Otherwise stated, we consider $u=u_0b_1\cdots u_{k-1}b_ku_k$ and indices $j_1<\cdots <j_\ell$ such that $b_{j_1}\cdots b_{j_\ell}=x$. We set $j_0=0$ and $j_{\ell+1}=k+1$. We focus on the contribution of these fixed elements as a power of $q$ to both sides of the relation. By \cref{thm:powers}, these occurrences respectively provide $\qbin{u}{t}$ and $\qbin{t}{x}$ on the right-hand side with
  \begin{equation}
    \label{eq:contri0}
    q^{\sum_{i=1}^k i |u_i|}\quad\text{ and }\quad q^{\sum_{i=1}^{\ell} i(j_{i+1}-j_i-1)}.
  \end{equation}
 Similarly, this occurrence of $x$ provides $\qbin{u}{x}$ on the left-hand side with
  \begin{equation}
    \label{eq:contri1}
    q^{\sum_{i=1}^{\ell} i(|u_{j_i}\cdots u_{j_{i+1}-1}|+j_{i+1}-j_{i}-1)}.
  \end{equation}
  So the difference of the exponents in \eqref{eq:contri0} and \eqref{eq:contri1} is equal to
  \begin{equation}
    \label{eq:ddsumi}
    \sum_{i=0}^\ell \sum_{n=j_i}^{j_{i+1}-1} (n-i)|u_n|.
  \end{equation}
  Consider the indices $m_1<\cdots <m_{k-\ell}$ such that $\{j_1,\ldots,j_\ell,m_1,\ldots,m_{k-\ell}\}=\{1,\ldots,k\}$. We set $m_0=0$ and $m_{k-\ell+1}=k+1$. This partition of $\{1,\ldots,k\}$ into two subsets of indices permits us to highlight either the letters of $x$ (when referring to the indices $j_i$) or the letters of $t$ and not in $x$ (using the indices $m_i$).

  We have reached a delicate point, and an example will give the reader a better understanding of the situation. Take $k=9$, $\ell=3$ and $m_1<m_2<j_1<m_3<j_2<m_4<m_5<j_3<m_6$ is the partition of $\{1,\ldots,9\}$. Using the indices in the previous sum, we have the following table
  \[\begin{array}{c|cccccccccc}
      & &m_1&m_2&j_1&m_3&j_2&m_4&m_5&j_3&m_6\\
      \hline
      i&0&0&0&1&1&2&2&2&3&3\\
      n&0&1&2&3&4&5&6&7&8&9\\
      (n-i)&0&1&2&2&3&3&4&5&5&6\\
      \end{array}\]

    In \cref{eq:ddsumi} we understand that $i$ is incremented each time a new $j_i$ index is reached and the value of $(n-i)$ is incremented each time a new $m_i$ index is reached. Hence \cref{eq:ddsumi} can be rewritten as
  \[\sum_{i=0}^{k-\ell}i|u_{m_i}\cdots u_{m_{i+1}-1}|\]
  which is the exponent of the term in the Gaussian coefficient (for the reader not used to these coefficients, we can interpret them as $q$-binomial coefficients of words over a unary alphabet $\{\diamond\}$ and use \cref{thm:powers})
  \begin{equation}
    \label{eq:gaussian_interpret}
    \qbin{|u|-\ell}{k-\ell}=\qbin{\diamond^{|u|-\ell}}{\diamond^{k-\ell}}=\sum_{\substack{w_0,w_1,\ldots,w_{k-\ell}\in \{\diamond\}^*\\
        \diamond^{|u|-\ell}=w_0\diamond\cdots w_{k-\ell-1}\diamond w_{k-\ell}}} q^{\sum_{i=1}^k i|w_i|}
  \end{equation}
  provided by the factorization \(|w_i|=|u_{m_i}\cdots u_{m_{i+1}-1}| \)
  \[ w_0\underline{\diamond}\cdots w_{k-\ell-1}\underline{\diamond} w_{k-\ell} = \diamond^{|u_0\cdots u_{m_1-1}|} \underline{\diamond} \cdots \diamond^{|u_{m_{k-\ell-1}}\cdots u_{m_{k-\ell}-1}|} \underline{\diamond}\, \diamond^{|u_{m_{k-\ell}}\cdots u_k|} \]  
where the underlined positions correspond exactly to the letters added to $x$ to get $t$.
\end{proof}

The following corollary generalizes an observation made in the introduction.
\begin{corollary}\label{cor:reconstruction}
Let $u \in A^*$ and $k\in\{1,\ldots, |u|\}$. The sequence $(\qbin{u}{x})_{x \in A^k}$ uniquely determines the word $u$.
\end{corollary}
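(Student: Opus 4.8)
The plan is to reduce the general statement to the case $k=1$ --- the observation recalled in the introduction --- and to settle that case directly. For $k=1$ I would invoke \cref{thm:powers} with a single letter: writing $u = a_1\cdots a_n$ with $n = |u|$, one obtains $\qbin{u}{a} = \sum_{i\,:\,a_i = a} q^{\,n-i}$, so the coefficient of $q^{\,n-i}$ in $\qbin{u}{a}$ equals $1$ precisely when $a_i = a$. Consequently, for each position $i$ exactly one letter $a\in A$ carries a nonzero coefficient on $q^{\,n-i}$ across the family $(\qbin{u}{a})_{a\in A}$, and reading off these letters recovers $u$ --- provided the length $n=|u|$ is known.

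The first thing I would do, then, is extract $|u|$ from the data. Evaluating at $q=1$ turns $q$-binomials into classical binomials, and $\sum_{x\in A^k}\binom{u}{x} = \binom{|u|}{k}$ counts, with multiplicity, the length-$k$ subwords of $u$. Hence $\sum_{x\in A^k}\qbin{u}{x}(1) = \binom{|u|}{k}$; since $k\ge 1$ is fixed, $n\mapsto\binom{n}{k}$ is strictly increasing for $n\ge k$, and $|u|\ge k$ by hypothesis, this single integer determines $|u|$ uniquely.

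With $|u|$ in hand, I would use the identity just proved to collapse the length-$k$ information down to length $1$. For any letter $a$ (so that $|x|=1\le k\le|u|$), that theorem gives
\[
\qbin{|u|-1}{k-1}\,\qbin{u}{a} \;=\; \sum_{t\in A^k}\qbin{u}{t}\,\qbin{t}{a}.
\]
The right-hand side is entirely determined by the known polynomials $(\qbin{u}{t})_{t\in A^k}$ together with the coefficients $\qbin{t}{a}$, which depend only on $t$ and $a$ and are therefore computable. The Gaussian coefficient $\qbin{|u|-1}{k-1}$ is a nonzero element of $\mathbb{Z}[q]$ (it equals $\binom{|u|-1}{k-1}\ge 1$ at $q=1$), so dividing in the field of fractions $\mathbb{Q}(q)$ recovers the polynomial $\qbin{u}{a}$ for every $a\in A$. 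Feeding these into the $k=1$ argument above reconstructs $u$.

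The main obstacle is conceptual rather than computational: recognizing that the preceding Manvel--Meyerowitz--Schwenk--Smith--Stockmeyer identity is exactly the device that reduces the length-$k$ coefficients to the length-$1$ coefficients, once the single global parameter $|u|$ has been read off. The only point needing genuine care is the invertibility of the Gaussian coefficient, which holds because $|u|\ge k$ forces $\qbin{|u|-1}{k-1}$ to be a bona fide nonzero polynomial, so the division over $\mathbb{Q}(q)$ is legitimate and returns the element $\qbin{u}{a}\in\mathbb{N}[q]$.
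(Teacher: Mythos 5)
Your proof is correct and takes essentially the same route as the paper's: fix $|u|$ first, then apply the $q$-deformed Manvel--Meyerowitz--Schwenk--Smith--Stockmeyer identity with $x=a$ a single letter to recover each $\qbin{u}{a}$ (dividing by the nonzero Gaussian coefficient $\qbin{|u|-1}{k-1}$), and finally read the letters of $u$ off the exponents as in the $k=1$ case. The only, immaterial, difference is how the length is extracted: you use $\sum_{x\in A^k}\qbin{u}{x}(1)=\binom{|u|}{k}$ and monotonicity in $|u|$, whereas the paper reads $|u|$ off the largest degree occurring in the sequence via \cref{cor:degg}.
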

\begin{proof}
  First note that $|u|$ can be determined from the largest degree in the sequence, see \cref{cor:degg}. 
The claim is true for $k=1$; indeed the polynomial $\qbin{u}{a}$ encodes the positions of $a$ in $u$.
For larger $k$, we use the above theorem to find the polynomials $\qbin{|u|-1}{k-1}\qbin{u}{a}$, $a \in A$, and thus the polynomials $\qbin{u}{a}$, which in turn uniquely determine
$u$.
\end{proof}

The classical formula
\[\sum_{v\in A^n}\binom{u}{v}=\binom{|u|}{n}\]
can naturally be extended to $q$-binomial coefficients of words. Observe that this result is independent of the size of the alphabet. 
If $P(q)=\sum_{i=0}^d c_i\, q^i$ is a polynomial in the variable~$q$, we let $[q^m]P$ denote the coefficient $c_m$ of the monomial of degree~$m$.

\begin{proposition}\label{prop:sum_over_subwords} Let $u$ and $v$ be words. Let $n\ge 0$ be an integer. We have
  \begin{equation}   \label{eq:sumu}
  \sum_{v\in A^n}\qbin{u}{v} = \qbin{|u|}{n}; \quad \text{and} \quad 
  \sum_{u\in A^n}\qbin{u}{v} = (\# A)^{n-|v|}\qbin{n}{|v|}
  \end{equation}
  where on the right-hand sides we have Gaussian binomial coefficients of integers. Consequently, for all~$i$
  \[
 [q^i]\qbin{u}{v}\le [q^i]\qbin{|u|}{|v|}.
 \]
\end{proposition}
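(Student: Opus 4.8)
The plan is to derive both summation identities directly from the occurrence expansion of \cref{thm:powers}, and then to read off the coefficient inequality as an immediate consequence of the first identity. The unifying idea is that once $\qbin{u}{v}$ is written as a sum over factorizations weighted by the gap-based exponent $\sum_i i|u_i|$, summing over all $v$ (respectively all $u$) turns the double sum into a sum over \emph{position sets}, whose weight depends only on the lengths of the gaps; this is exactly the unary-alphabet expansion of a Gaussian binomial, as already exploited in the proof of the preceding theorem via \cref{eq:gaussian_interpret}.

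For the first identity I would expand each $\qbin{u}{v}$ with $v=a_1\cdots a_n$ using \cref{thm:powers} and swap the order of summation, so that $\sum_{v\in A^n}\qbin{u}{v}$ becomes a sum over all choices of letters $a_1,\ldots,a_n$ together with a compatible factorization $u=u_0a_1\cdots u_{n-1}a_nu_n$. Choosing the letters and such a factorization is the same datum as choosing $n$ positions $p_1<\cdots<p_n$ in $u$: the letters $a_i$ are then forced to be the letters of $u$ at those positions, so exactly one pair $(v,\text{factorization})$ arises from each position set. Since the exponent $\sum_{i=1}^n i|u_i|$ depends only on the gap lengths, this is precisely the expansion of $\qbin{\diamond^{|u|}}{\diamond^n}=\qbin{|u|}{n}$ obtained by applying \cref{thm:powers} over the unary alphabet $\{\diamond\}$. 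Alternatively, one may induct on $|u|$, writing $u=u'a$ and splitting the sum using the recurrence \cref{eq:recdef}, so that the two resulting terms $q^n\qbin{|u'|}{n}$ and $\qbin{|u'|}{n-1}$ recombine via the Gaussian Pascal identity \cref{eq:q-bin-Pascal}.

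For the second identity I would fix $v=a_1\cdots a_\ell$ with $\ell=|v|$, expand $\qbin{u}{v}$ by \cref{thm:powers}, and sum over all $u\in A^n$. Here each pair $(u,\text{embedding of }v)$ is described by a choice of $\ell$ positions carrying the forced letters $a_1,\ldots,a_\ell$ together with an arbitrary assignment of letters to the remaining $n-\ell$ positions. For a fixed set of $\ell$ positions the exponent $\sum_{i=1}^\ell i|u_i|$ is determined by the gaps and is independent of the free letters, so summing over the $(\#A)^{n-\ell}$ choices of those letters contributes exactly the factor $(\#A)^{n-\ell}$, while summing the weights over the $\ell$-position sets in $\{1,\ldots,n\}$ yields $\qbin{n}{\ell}$ by the same unary interpretation. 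This gives $(\#A)^{n-|v|}\qbin{n}{|v|}$.

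Finally, the coefficient inequality follows at once: taking $n=|v|$ in the first identity gives $\sum_{w\in A^{|v|}}\qbin{u}{w}=\qbin{|u|}{|v|}$, and since every $\qbin{u}{w}$ lies in $\N[q]$ and hence has non-negative coefficients, the single summand $\qbin{u}{v}$ is dominated coefficient by coefficient by the whole sum, i.e.\ $[q^i]\qbin{u}{v}\le[q^i]\qbin{|u|}{|v|}$ for every $i$. I expect the only delicate point to be the exponent bookkeeping in the two main identities, namely confirming that the gap-based weight $\sum_i i|u_i|$ coming from words matches, position set by position set, the weight appearing in the Gaussian expansion; this is precisely the matching already carried out around \cref{eq:gaussian_interpret}, so no new difficulty should arise beyond careful indexing.
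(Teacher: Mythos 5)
Your proposal is correct and follows essentially the same route as the paper: both identities are obtained by expanding via \cref{thm:powers}, swapping summation order, and recognizing the resulting sum over position sets (with gap-dependent weights) as the unary-alphabet Gaussian expansion of \cref{eq:gaussian_interpret}, with the free-letter count $(\#A)^{n-|v|}$ factoring out in the second identity; the coefficient inequality is then, exactly as in the paper, a consequence of the first identity and non-negativity of coefficients. The inductive alternative you sketch via \cref{eq:recdef} and \cref{eq:q-bin-Pascal} is a valid bonus but not needed.
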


Examples for both identities are
\begin{align*}
\sum_{v\in \{0,1\}^3}\qbin{011010}{v} &=\qbin{6}{3}=q^9+q^8+2 q^7+3 q^6+3 q^5+3 q^4+3 q^3+2 q^2+q+1; \text{ and}\\
\sum_{u\in \{0,1\}^5}\qbin{u}{01}     &=2^3\qbin{5}{2}=8 q^6+8 q^5+16 q^4+16 q^3+16 q^2+8 q+8.
\end{align*}
\begin{proof}
We first prove the identity on the left of \cref{eq:sumu}. We apply \cref{thm:powers} and since the sum ranges over all words of length $n$, this corresponds to selecting $n$ positions amongst the $|u|$ available ones
\begin{eqnarray*}
  \sum_{a_1\cdots a_n\in A^n}\qbin{u}{a_1\cdots a_n}&=&
                                                        \sum_{a_1\cdots a_n\in A^n}\sum_{\substack{u_0,u_1,\ldots,u_{n}\in A^* \\ u=u_0a_1\cdots u_{n-1} a_n u_{n}}} q^{\sum_{i=1}^n i|u_i|}\\
  &=& \sum_{1\le j_1<\cdots< j_n<|u|+1=j_{n+1}} q^{\sum_{i=1}^ni(j_{i+1}-j_i-1)}=\qbin{|u|}{n}.
\end{eqnarray*}
The last equality is the interpretation of Gaussian binomial coefficients as the one used in \cref{eq:gaussian_interpret}. 
   
The second identity of \cref{eq:sumu} is proved in a similar manner: by \cref{thm:powers}
\begin{eqnarray*}
  \sum_{u\in A^n}\qbin{u}{a_1\cdots a_k} &=& \sum_{u\in A^n}
                                             \sum_{\substack{u_0,u_1,\ldots,u_{k}\in A^* \\ u=u_0a_1\cdots u_{k-1} a_k u_{k}}} q^{\sum_{i=1}^k i|u_i|}\\
  &=& \sum_{j_0=0< j_1<\cdots< j_k<|u|+1=j_{k+1}}  \sum_{\substack{u_0,u_1,\ldots,u_{k}\in A^* \\ u=u_0a_1\cdots u_{k-1} a_k u_{k}\\ |u_i|=j_{i+1}-j_i-1\\}} q^{\sum_{i=1}^k i|u_i|}\\
  &=& (\# A)^{n-k} \sum_{j_0=0< j_1<\cdots< j_k<|u|+1=j_{k+1}} q^{\sum_{i=1}^k i(j_{i+1}-j_i-1)}.
\end{eqnarray*}
The third formula is a direct consequence of the first one and the fact that all coefficients of $q$-binomial coefficients of words are non-negative.
\end{proof}
  
\begin{remark}
The second identity of \cref{eq:sumu} also holds for classical coefficients of words (by letting $q$ tend to $1$) but seems to have been unnoticed in the literature.
The combinatorial explanation is the following one. Let $v$ be a word. If we list all words of length $n$ and fix exactly $|v|$ positions (amongst the $n$ available), there are exactly $(\# A)^{n-|v|}$ words of the list where $v$ occurs as a subword in these particular positions. Of course, notice that there are $\binom{n}{|v|}$ ways to choose $|v|$ positions amongst $n$.
\end{remark}

\section{Deformations in formal series}\label{sec:def}

Classical binomial coefficients of words appear naturally in certain operations on formal series.
Let $(\mathbb{K},+,\cdot,0,1)$ be a semiring such as $\mathbb{N}$ or $\mathbb{N}[q]$.
We let $\mathbb{K}\langle\langle A^*\rangle\rangle$ denote the set of {\em formal series} over $A^*$ with coefficients in $\mathbb{K}$, i.e., the set of maps from $A^*$ to $\mathbb{K}$. 

In this section we consider $q$-deformations of two operations, namely the \emph{shuffle} and \emph{infiltration} products.
These two operations are thoroughly studied in \cite[Chap.~6]{Lothaire1997}, and our aim is to define the corresponding $q$-deformations
in such a way that most of the nice properties, like associativity, extend to $q$-deformed versions. For the $q$-shuffle product
all the sought properties are established. For the $q$-infiltration, we consider a family \cref{eq:geninf} of definitions; each of them
is shown to fail to give an associative operation. However, focusing on one particular version, we establish $q$-deformed versions of some classical properties.

We set some terminology and notation used throughout this section. The coefficient of $w\in A^*$ in $s\in \mathbb{K}\langle\langle A^*\rangle\rangle$ is denoted by $\langle s,w\rangle$. The {\em support} of the series $s$ is the set of words $w$ such that $\langle s,w\rangle\neq 0$. The set of polynomials, i.e., series with finite support, is denoted by $\mathbb{K}\langle A^*\rangle$.  The {\em degree} of $s\in \mathbb{K}\langle A^*\rangle$ is the maximal length of the words with a non-zero coefficient.

\subsection{Deformed shuffle product}

The usual shuffle of two words belonging to $A^*$ is a polynomial in
$\mathbb{N}\langle A^*\rangle$ encoding the multiset of the words of the form $u_1v_1\cdots u_kv_k$,
where $u=u_1\cdots u_k$, $v=v_1\cdots v_k$, and $u_i,v_i\in A^*$ for all~$i$. For instance, shuffling
$010$ and $0$, we get
\[{\color{blue}010}\shuffle{\color{red}0}={\color{blue}010}{\color{red}0}+
{\color{blue}01}{\color{red}0}{\color{blue}0}+
{\color{blue}0}{\color{red}0}{\color{blue}10}+
{\color{red}0}{\color{blue}010}=2\cdot 0100+2\cdot 0010.\]
We define a $q$-deformed shuffle product of two words $u$ and $v$ by extending in a natural way the
recursive definition found in \cite[Chap.~6]{Lothaire1997}. We further inspect the corresponding properties of the $q$-shuffle, where the coefficients are not integers but polynomials in $q$.

To be meaningful we need to define two operations. The first one is an external operation of $\mathbb{N}[q]$ on $\mathbb{N}[q]\langle\langle A^*\rangle\rangle$ acting on the left (or on the right):
\[\forall P\in\mathbb{N}[q], s\in \mathbb{N}[q]\langle\langle A^*\rangle\rangle, w\in A^*,\quad \langle P.s,w\rangle=P\langle s,w\rangle.\]
The second one is an external operation of $A^*$ on $\mathbb{N}[q]\langle\langle A^*\rangle\rangle$ acting on the right:
\[\forall s\in \mathbb{N}[q]\langle\langle A^*\rangle\rangle, v,w\in A^*,\quad \langle s.v,wv \rangle=\langle s,w \rangle.\]
Considering $v$ as the series $1.v$, this is just the usual product of two series.

\begin{definition}\label{def:shuffle} Let $u,v$ be two finite words over an alphabet $A$ and $a,b\in A$.
  The {\em $q$-shuffle} of $u$ and $v$ is a polynomial in $\mathbb{N}[q]\langle A^*\rangle$ defined recursively by
  \begin{equation}
    u \shuffle_q \varepsilon = \varepsilon \shuffle_q u = 1.u; \quad 
    \label{eq:qshuffle}
    ua \shuffle_q vb = q^{|vb|} (u\shuffle_q vb)a+(ua\shuffle_q v)b.
  \end{equation}
\end{definition}
As an example, we have
\begin{equation}
  \label{eq:exashuffle}
  010 \shuffle_q 0 = (q^3+q^2) 0010 + (q+1) 0100,
\end{equation}
and the reader may notice that evaluation at $q=1$ gives back the classical shuffle.

\begin{lemma}\label{lem:combshuffle}
  Let $u$ be a word and $a_1,\ldots,a_n$ be letters. We have
  \[u \shuffle_q a_1\cdots a_n=\sum_{\substack{u_0,\ldots,u_n\in A^*\\ u=u_0\cdots u_n}} q^{\sum_{i=1}^n i|u_i|} u_0a_1\cdots u_{n-1}a_nu_n.\]
  In particular, $\deg \langle u\shuffle_q v,vu\rangle=|u|\, |v|$ is the maximal degree of the coefficients. Similarly, the constant term $1$ appears in the polynomial coefficient of $uv$.
\end{lemma}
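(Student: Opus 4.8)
The plan is to prove the displayed identity by induction on the total length $|u|+n$, which amounts to verifying that its right-hand side satisfies the same initial conditions and recursion \cref{eq:qshuffle} as the $q$-shuffle. This is the natural induction: both terms on the right of \cref{eq:qshuffle} strictly decrease $|u|+|v|$, so that recursion together with the base case $u\shuffle_q\varepsilon=\varepsilon\shuffle_q u=1.u$ determines the product uniquely. First I would dispatch the base cases. For $n=0$ the sum has the single factorization $u_0=u$ and returns $1.u$; for $u=\varepsilon$ the only factorization has all blocks empty and returns $1.(a_1\cdots a_n)$. Both match \cref{eq:qshuffle}.

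For the inductive step I would write the left argument as $ua$ and apply the recursion with $b=a_n$, so that
\[
ua\shuffle_q a_1\cdots a_n = q^{n}\,(u\shuffle_q a_1\cdots a_n)\,a + (ua\shuffle_q a_1\cdots a_{n-1})\,a_n,
\]
and then substitute the induction hypothesis into each summand (both have strictly smaller total length). The crux is to recognise the two summands as the two halves of the target sum, split according to whether the last block $u_n$ is empty. The first summand, after the scalar $q^n$ and the appended letter $a$, reindexes to the factorizations of $ua$ with nonempty last block (set $u_n':=u_na$); the second summand, after the appended letter $a_n$, reindexes to the factorizations with empty last block ($u_n=\varepsilon$).

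I expect the main obstacle to be the exponent bookkeeping in the first summand: one must check that the prefactor $q^{n}$ is exactly what is needed to convert the weight $\sum_{i=1}^{n} i|u_i|$ of a factorization of $u$ into the weight of the corresponding factorization of $ua$ whose last block gained one letter sitting at weight $n$, namely $n+\sum_{i=1}^{n} i|u_i|=\sum_{i=1}^{n} i|u_i'|$ with $u_n'=u_na$. Since the two families of factorizations partition all factorizations of $ua$, their contributions add up to the claimed formula, closing the induction.

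Finally I would read off the two ``in particular'' claims from the extremal monomials of the formula. Bounding $\sum_{i=1}^{n} i|u_i|\le n\sum_{i\ge 1}|u_i|\le n|u|=|v|\,|u|$ shows the maximal exponent is $|v|\,|u|$, attained only by $u_0=\cdots=u_{n-1}=\varepsilon$ and $u_n=u$, whose associated word is $vu$; hence $|v|\,|u|$ is the top degree occurring, and it appears (monically) in the coefficient of $vu$, giving $\deg\langle u\shuffle_q v,vu\rangle=|u|\,|v|$. Dually, a constant term arises only from $u_1=\cdots=u_n=\varepsilon$, i.e. $u_0=u$, whose associated word is $uv$, yielding the constant term $1$ in the coefficient of $uv$.
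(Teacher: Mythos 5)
Your proposal is correct and takes essentially the same route as the paper: both arguments derive the formula from the recursion \eqref{eq:qshuffle}, the paper by directly unrolling it (processing the letters of $u_n$ via the first term, then $a_n$ via the second, then $u_{n-1}$, and so on, so each factorization accumulates $q^{n|u_n|}\cdot q^{(n-1)|u_{n-1}|}\cdots$), while you package the identical case split --- last letter taken from $u$ versus $u_n=\varepsilon$ --- as a formal induction on $|u|+n$. Your exponent bookkeeping in the inductive step and your extraction of the two extremal-monomial claims (maximum attained uniquely at $u_0=\cdots=u_{n-1}=\varepsilon$, $u_n=u$, giving $vu$; constant term uniquely at $u_0=u$, giving $uv$) are both accurate, so nothing is missing.
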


\begin{proof}
  The support of the polynomial \(u \shuffle_q a_1\cdots a_n\) is the set
  \[\{u_0a_1\cdots u_{n-1}a_nu_n \mid u_0,\ldots,u_n\in A^*,\ u=u_0\cdots u_n\}.\]
Each factorization of $u=u_0\cdots u_n$ provides a power of $q$ by recursive applications of \eqref{eq:qshuffle}. We process each letter of $u_n$ with the first term of \eqref{eq:qshuffle}. This gives a factor $q^{n|u_n|}$. Then we consider the letter $a_n$ with the second term of \eqref{eq:qshuffle}. We continue with the letters of $u_{n-1}$ providing this time $q^{(n-1)|u_{n-1}|}$ and so on and so forth.
\end{proof}

Applying this result to the same example as above, we have
\[{\color{blue}010}\shuffle_q{\color{red}0}= 1\cdot {\color{blue}010}{\color{red}0}+
q\, {\color{blue}01}{\color{red}0}{\color{blue}0}+
q^2\, {\color{blue}0}{\color{red}0}{\color{blue}10}+
q^3\, {\color{red}0}{\color{blue}010}.\]
Roughly speaking, one has to count for each red letter, how many blue letters are on the right of it. Summing up these numbers gives the exponent to take into account. Similarly, 
\[010 \shuffle_q 00 =  (1+q+q^2) 01000+ (q^2+2q^3+q^4) 00100+ (q^4+q^5+q^6) 00010\]
because
\begin{align*}
  {\color{blue}010} \shuffle_q {\color{red}00} = &\ 
1\cdot {\color{blue}010}{\color{red}00}+
q\, {\color{blue}01}{\color{red}0}{\color{blue}0}{\color{red}0}+
q^2\, {\color{blue}0}{\color{red}0}{\color{blue}10}{\color{red}0}+
q^3\, {\color{red}0}{\color{blue}010}{\color{red}0}+
q^2\, {\color{blue}01}{\color{red}00}{\color{blue}0}+\\
&\ q^3\, {\color{blue}0}{\color{red}0}{\color{blue}1}{\color{red}0}{\color{blue}0}+
q^4\, {\color{red}0}{\color{blue}01}{\color{red}0}{\color{blue}0}+
q^4\, {\color{blue}0}{\color{red}00}{\color{blue}10}+
q^5\, {\color{red}0}{\color{blue}0}{\color{red}0}{\color{blue}10}+
q^6\, {\color{red}00}{\color{blue}010}.
\end{align*}

The following result gives an alternative statement for \cref{lem:combshuffle}. As a consequence, we get that the $q$-shuffle is associative. Let $\pi$ be a permutation in the symmetric group $S_n$. We let $\inv(\pi)$ be the number of inversions of $\pi$, i.e., $\inv(\pi)=\#\{1\le i<j\le n:\pi(i)>\pi(j)\}$.

\begin{corollary}\label{cor:ref} 
  Let $a_1,\ldots,a_n\in A$. We have
  \[ a_1\cdots a_k\shuffle_q a_{k+1}\cdots a_n=\sum_{\substack{\pi\in S_n: \pi(1)<\cdots <\pi(k),\\ \pi(k+1)<\cdots<\pi(n)}} q^{\inv(\pi)} a_{\pi(1)}\cdots a_{\pi(n)}\]
  and in particular, the $q$-shuffle is associative.
\end{corollary}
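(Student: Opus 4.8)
The plan is to prove the permutation formula first and then deduce associativity as a consequence. To establish the displayed identity
\[
a_1\cdots a_k\shuffle_q a_{k+1}\cdots a_n=\sum_{\substack{\pi\in S_n:\ \pi(1)<\cdots<\pi(k),\\ \pi(k+1)<\cdots<\pi(n)}} q^{\inv(\pi)}\, a_{\pi(1)}\cdots a_{\pi(n)},
\]
I would start from \cref{lem:combshuffle}. That lemma describes $u\shuffle_q a_{k+1}\cdots a_n$ (with $u=a_1\cdots a_k$) as a sum over factorizations $u=u_0\cdots u_{n-k}$ weighted by $q^{\sum_i i|u_i|}$. The main task is therefore to set up a bijection between such factorizations (interleaving the letters of $u$ with the letters $a_{k+1},\ldots,a_n$) and the shuffle permutations $\pi\in S_n$ satisfying $\pi(1)<\cdots<\pi(k)$ and $\pi(k+1)<\cdots<\pi(n)$, and to check that the weight $\sum_{i} i|u_i|$ equals $\inv(\pi)$ under this correspondence.

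The bijection itself is the standard one: a factorization $u_0(a_{k+1})u_1(a_{k+2})\cdots u_{n-k-1}(a_n)u_{n-k}$ records exactly how the $k$ letters of $u$ and the $n-k$ letters of $a_{k+1}\cdots a_n$ are interleaved in the resulting word of length $n$, and this interleaving is precisely the data of a permutation $\pi$ that keeps both blocks in their internal order (hence the increasing conditions on $\pi$). The heart of the argument is the weight-matching step: I would argue that $|u_i|$ counts the number of letters of the \emph{first} block sitting strictly between the $i$-th and $(i+1)$-th inserted letters of the second block, so that $i|u_i|$ counts the inversions created by those $|u_i|$ first-block letters against the $i$ second-block letters positioned to their right. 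Summing over $i$ gives exactly the number of pairs $(p,s)$ with $p$ in the first block, $s$ in the second block, and $p$ appearing after $s$ in the shuffled word — which is $\inv(\pi)$, since within each block the order is preserved and contributes no inversions. I expect the slightly delicate point to be matching the indexing conventions of \cref{lem:combshuffle} (letters processed from the right, positions counted from the right) with the inversion count of $\pi$, so I would verify the weight equality carefully, perhaps on a small example, to pin down the orientation.

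For associativity, I would use the permutation formula to give a uniform description of an iterated $q$-shuffle. Specifically, for a word $a_1\cdots a_n$ partitioned into three consecutive blocks, both $(a_1\cdots a_i\shuffle_q a_{i+1}\cdots a_j)\shuffle_q a_{j+1}\cdots a_n$ and $a_1\cdots a_i\shuffle_q(a_{i+1}\cdots a_j\shuffle_q a_{j+1}\cdots a_n)$ should expand to the same sum $\sum_\pi q^{\inv(\pi)}a_{\pi(1)}\cdots a_{\pi(n)}$ over all $\pi\in S_n$ that preserve the internal order of each of the three blocks. The key fact making this work is that when composing two shuffles, the inversion counts add: if an inner shuffle produces $q^{\inv(\sigma)}$ and the outer shuffle of the result against the third block contributes a further factor, the combined exponent is the total number of inter-block inversions of the resulting global permutation $\pi$, independent of the order of association. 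I would therefore reduce associativity to the statement that $\inv$ is additive over the three pairwise inter-block contributions, which follows because inversions only occur between distinct blocks and each such pair is counted exactly once regardless of the bracketing. Extending from single words to arbitrary polynomials in $\mathbb{N}[q]\langle A^*\rangle$ is then routine by bilinearity, since $\shuffle_q$ is $\mathbb{N}[q]$-bilinear by construction.
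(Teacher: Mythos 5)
Your proposal is correct and takes essentially the same route as the paper: both expand the shuffle via \cref{lem:combshuffle}, identify the weight $\sum_i i|u_i|$ with $\inv(\pi)$ by noting that $i|u_i|$ counts exactly the inversions between the letters of $u_i$ and the $i$ second-block letters preceding them (inversions arise only between the two blocks), and then deduce associativity from the analogous multi-block permutation formula with additive inversion counts. The only differences are cosmetic: the paper states the iterated-shuffle formula for an arbitrary number of blocks rather than three and leaves the bilinear extension implicit, while your phrase ``positioned to their right'' has the orientation reversed (the $i$ second-block letters lie to the \emph{left} of $u_i$), a slip your subsequent correct characterization of inversion pairs already repairs.
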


\begin{proof}
  Applying \cref{lem:combshuffle}, we get
  \[a_1\cdots a_k \shuffle_q a_{k+1}\cdots a_n=\sum_{\substack{u_0,\ldots,u_{n-k}\in A^*\\ a_1\cdots a_k=u_0\cdots u_{n-k}}} q^{\sum_{i=1}^{n-k} i|u_i|} u_0a_{k+1}\cdots u_{n-k-1}a_{n}u_{n-k}.\]
  Notice that $i|u_i|$ counts the number of inversions between the first $i$ letters $a_{k+1},\ldots,a_{k+i}$ and the letters of $u_i$.

  Let us consider associativity. For $0=k_0\le k_1\le \cdots \le k_h=n$, it is enough to observe that 
  \[a_{k_0+1}\cdots a_{k_1}\shuffle_q \cdots \shuffle_q a_{k_{h-1}+1}\cdots a_{k_h}=
    \sum_{\substack{\pi\in S_n:\\ \pi(k_i+1)<\cdots <\pi(k_{i+1})\\\text{ for }  0\le i<h}} q^{\inv(\pi)} a_{\pi(1)}\cdots a_{\pi(n)}. \]
\end{proof}

\begin{remark}\label{rem:ref}
  As pointed out by one of the reviewers, $(\mathbb{Z}[q]\langle A^*\rangle, \shuffle_q)$ is Green's quantized shuffle algebra of type $(A,\cdot)$ defined by $a\cdot b=1$ for all $a,b\in A$ over $\mathbb{Q}(q)$. See \cite[Sec.~4]{Green}. 
\end{remark}

The next lemma is similar to \cref{lem:combshuffle}, but here the focus is put on the letters of the first factor of the $q$-shuffle. In the above examples, it is equivalent to count for each blue letter, how many red letters are on its left.

\begin{lemma}\label{lem:combshuffle2}
Let $v$ be a word and $a_1,\ldots,a_k$ be letters. We have
\[a_1\cdots a_k\shuffle_q v=\sum_{\substack{v_0,\ldots,v_k\in A^*\\ v=v_0\cdots v_k}}  q^{\sum_{i=0}^k (k-i)|v_i|} v_0a_1v_1 \cdots v_{k-1}a_kv_k.\]
\end{lemma}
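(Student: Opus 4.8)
The plan is to prove the identity by induction on the quantity $k+|v|$, peeling off the last letters of both factors through the recursive definition \eqref{eq:qshuffle}. This parallels the proof of \cref{lem:combshuffle}, but now the exponent must be organized around the blocks $v_i$ carved out of the second factor $v$ rather than around the blocks of the first factor.

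First I would dispose of the two base cases. When $k=0$ the left-hand side is $\varepsilon\shuffle_q v=1.v$ and the right-hand side has the single factorization $v_0=v$ contributing $q^0\,v$; when $v=\varepsilon$ the left-hand side is $a_1\cdots a_k\shuffle_q\varepsilon=1.(a_1\cdots a_k)$ and the right-hand side again reduces to one term. Both agree with the claim.

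For the inductive step I assume $k\ge 1$ and $|v|\ge 1$, write $v=v'b$ with $b\in A$, and apply \eqref{eq:qshuffle} with first factor $(a_1\cdots a_{k-1})a_k$ and second factor $v'b$, obtaining
\[
a_1\cdots a_k\shuffle_q v = q^{|v|}\bigl((a_1\cdots a_{k-1})\shuffle_q v\bigr)a_k + \bigl((a_1\cdots a_k)\shuffle_q v'\bigr)b.
\]
Both summands have parameter sum $k+|v|-1$, so the induction hypothesis applies to each. In the first summand it expands $(a_1\cdots a_{k-1})\shuffle_q v$ over factorizations $v=w_0\cdots w_{k-1}$; appending $a_k$ turns each term into $w_0a_1w_1\cdots a_{k-1}w_{k-1}a_k$, which is exactly the target word $v_0a_1\cdots a_kv_k$ for the factorization with $v_i=w_i$ ($i<k$) and \emph{empty} last block $v_k=\varepsilon$. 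In the second summand the hypothesis expands over factorizations $v'=w_0\cdots w_k$; appending $b$ replaces $w_k$ by $w_kb$, producing the target word for the factorization with $v_i=w_i$ ($i<k$) and \emph{nonempty} last block $v_k=w_kb$. These two families (empty / nonempty last block) partition all factorizations $v=v_0\cdots v_k$, so every target word is produced exactly once.

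The point requiring care — and the main obstacle — is matching the powers of $q$. For the first summand I would use $|v|=\sum_{i=0}^{k-1}|v_i|$ to rewrite the prefactor as $q^{|v|}\cdot q^{\sum_{i=0}^{k-1}(k-1-i)|w_i|}=q^{\sum_{i=0}^{k-1}(k-i)|v_i|}$, which equals $q^{\sum_{i=0}^{k}(k-i)|v_i|}$ since the missing $i=k$ term vanishes as $v_k=\varepsilon$. For the second summand, appending $b$ introduces no power of $q$, and because the last block enters the exponent with weight $k-k=0$, enlarging $w_k$ to $w_kb=v_k$ leaves $\sum_{i=0}^{k}(k-i)|w_i|=\sum_{i=0}^{k}(k-i)|v_i|$ unchanged. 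Hence every word carries precisely the exponent $\sum_{i=0}^{k}(k-i)|v_i|$ demanded by the statement, closing the induction. As an alternative route one could derive the formula from \cref{cor:ref}: a shuffle of $a_1\cdots a_k$ with $v$ corresponds to a factorization $v=v_0\cdots v_k$, and the associated inversions are exactly the pairs in which a letter of $v_i$ precedes one of the first-factor letters $a_{i+1},\dots,a_k$, giving $\inv=\sum_{i=0}^{k}(k-i)|v_i|$.
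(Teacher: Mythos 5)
Your proof is correct and takes essentially the paper's route: the paper also argues by unwinding the recursion \eqref{eq:qshuffle} (``proceed as in the proof of \cref{lem:combshuffle}''), observing that each letter $a_i$ contributes $q^{|v_0\cdots v_{i-1}|}$ to the exponent, which sums to $\sum_{i=0}^k(k-i)|v_i|$ --- your induction on $k+|v|$, with the split into empty versus nonempty last block $v_k$, is a fully formalized version of that same bookkeeping. Your alternative sketch via the inversion count of \cref{cor:ref} is also sound, though it is not the argument the paper gives.
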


  \begin{proof}
    Proceed as in the proof of \cref{lem:combshuffle}. Assume that $v_0a_1v_1a_2v_2 \cdots v_{k-1}a_kv_k$ belongs to the support of $a_1\cdots a_k\shuffle_q v$, with $v=v_0\cdots v_k,\ v_i\in A^*$. Focus on a particular letter $a_i$. When applying \cref{eq:qshuffle}, this letter $a_i$ will contribute to the exponent of $q$ with the number of letters of $v$ to its left, which is exactly $|v_0\cdots v_{i-1}|$.
  \end{proof}
  
  We let $u\shuffle_{\frac{1}{q}} v$ denote the element in $\mathbb{N}[\frac{1}{q}]\langle\langle A^*\rangle\rangle$ such that $[q^{-i}]\langle u\shuffle_{\frac{1}{q}} v,w\rangle=[q^i]\langle u\shuffle_q v,w\rangle$ for all $i,w$. 
As an example,
\[010 \shuffle_{\frac{1}{q}} 0 = \left({\frac{1}{q^3}}+\frac{1}{q^2}\right) 0010 + \left(\frac{1}{q}+1\right) 0100.\]
\begin{proposition}\label{pro:reciprocal}
  The $q$-shuffle satisfies the reciprocal relation
  \[q^{|u| \cdot |v|} (v\shuffle_{\frac{1}{q}} u)=u\shuffle_q v.\]
\end{proposition}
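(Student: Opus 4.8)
The plan is to prove the reciprocal relation $q^{|u|\cdot|v|}(v\shuffle_{\frac{1}{q}} u)=u\shuffle_q v$ by comparing coefficients word-by-word, using the two combinatorial descriptions of the $q$-shuffle already at hand. Writing $v=b_1\cdots b_m$ and $u=a_1\cdots a_k$, I would first apply \cref{lem:combshuffle} to expand $v\shuffle_q u$: each factorization $v=v_0\cdots v_k$ contributes the word $v_0a_1\cdots v_{k-1}a_kv_k$ with weight $q^{\sum_{i=1}^k i|v_i|}$. Passing to $\shuffle_{\frac{1}{q}}$ replaces each exponent by its negative, and multiplying by $q^{|u|\cdot|v|}=q^{km}$ shifts every exponent up by $km$. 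Thus the left-hand side assigns to the word $v_0a_1v_1\cdots a_kv_k$ the power $q^{km-\sum_{i=1}^k i|v_i|}$.

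The key step is to match this against the right-hand side $u\shuffle_q v$, for which I would invoke \cref{lem:combshuffle2} (the ``dual'' lemma focusing on the letters of the first factor): the same word, read as $v_0a_1v_1\cdots a_kv_k$ with $v=v_0\cdots v_k$, arises with weight $q^{\sum_{i=0}^k(k-i)|v_i|}$. So the identity reduces to the purely arithmetic claim
\[
km-\sum_{i=1}^k i|v_i| = \sum_{i=0}^k (k-i)|v_i|,
\]
which I would verify by rewriting the right-hand side as $k\sum_{i=0}^k|v_i| - \sum_{i=0}^k i|v_i| = k|v| - \sum_{i=1}^k i|v_i| = km - \sum_{i=1}^k i|v_i|$, using $|v|=m$ and that the $i=0$ term vanishes in the last sum. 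Since the two expansions range over exactly the same set of factorizations of $v$ and produce exactly the same supporting words, this coefficientwise equality establishes the proposition.

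I expect the main subtlety to be bookkeeping rather than conceptual: one must be careful that the two lemmas are applied with the roles of the two factors arranged consistently, so that the generic word $v_0a_1v_1\cdots a_kv_k$ indexed by factorizations of $v$ is literally the same object on both sides. In particular, \cref{lem:combshuffle} describes $v\shuffle_q u$ by factorizing the \emph{first} argument $v$ and inserting the letters $a_1,\ldots,a_k$ of $u$, whereas \cref{lem:combshuffle2} describes $u\shuffle_q v$ by factorizing the \emph{second} argument $v$ and interleaving the letters $a_1,\ldots,a_k$ of $u$; I would check explicitly that the resulting words and their index sets coincide. Once this alignment is fixed, the exponent identity above is immediate and the proof closes.
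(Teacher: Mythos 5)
Your proposal is correct and follows essentially the same route as the paper, which likewise derives the identity from \cref{lem:combshuffle,lem:combshuffle2} by matching, for each factorization $v=v_0\cdots v_k$, the exponents on the two sides; the paper organizes the computation as a product of per-letter contributions $q^{|v|-|v_0\cdots v_{i-1}|}$ while you sum the exponents directly, but these are the same calculation. Your explicit verification that $\sum_{i=0}^k(k-i)|v_i|=k|v|-\sum_{i=1}^k i|v_i|$ is exactly the arithmetic identity underlying the paper's final display.
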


\begin{proof}
We can proceed by induction on $|u| + |v|$. Or, it can be seen as a consequence of the previous two \cref{lem:combshuffle,lem:combshuffle2}. The $q$-shuffles $u\shuffle_{q} v$ and $v\shuffle_{q} u$ clearly have the same support. Let $w$ be a word in this support and consider the specific factorization $w=v_0a_1v_1a_2v_2\cdots v_{k-1}a_kv_k$ with $u=a_1\cdots a_k,\ a_i\in A$ and $v=v_1\cdots v_k,\ v_i\in A^*$. For each $a_i$, the contribution to $u\shuffle_{q} v$ given by \cref{lem:combshuffle2} is $q^{|v_0\cdots v_{i-1}|}$ and the one to $v\shuffle_{q} u$ given by \cref{lem:combshuffle} is \[q^{|v_i\cdots v_k|}=q^{|v|-|v_0\cdots v_{i-1}|}.\]
Finally, with $k=|u|$, 
\[\prod_{i=1}^k q^{|v|-|v_0\cdots v_{i-1}|}= q^{|u|\, |v|} \prod_{i=1}^k \left(\frac{1}{q}\right)^{|v_0\cdots v_{i-1}|}.\] 
\end{proof}

To conclude this section, we can extend \cref{def:shuffle} to the $q$-shuffle of a word $u\in A^*$ and a series $s\in\mathbb{N}[q]\langle\langle A^*\rangle\rangle$ by
\[u\shuffle_q s= \sum_{v\in A^*} \langle s,v\rangle\, (u\shuffle_qv) \quad\text{ and }\quad s\shuffle_q u= \sum_{v\in A^*} \langle s,v\rangle\, (v\shuffle_qu).\]
In particular, if $A^*$ is understood as the characteristic formal series whose coefficients are all equal to $1$, then 
\[u\shuffle_q A^*=u\shuffle_q \sum_{v\in A^*}v= \sum_{v\in A^*} u\shuffle_qv \quad\text{ and }\quad  A^* \shuffle_q u=\sum_{v\in A^*} v\shuffle_qu.\]
By bilinearity, the $q$-shuffle can be readily extended to two series in $\mathbb{N}[q]\langle\langle A^*\rangle\rangle$.

As a consequence of \cref{thm:powers} and \cref{lem:combshuffle}, we are now able to prove the following result, which directly links the notions of $q$-shuffle and $q$-deformed binomial coefficients of words.

\begin{proposition}\label{prop:shuffbin}
  Let $u,w$ be words. Then
  \begin{equation}
  \label{eq:coeff_shuffle}
  \langle A^*\shuffle_q u, w\rangle = \qbin{w}{u}.
\end{equation}
\end{proposition}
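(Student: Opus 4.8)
The plan is to compute the coefficient $\langle A^*\shuffle_q u, w\rangle$ directly from the combinatorial descriptions already established, matching it term-by-term against the expansion of $\qbin{w}{u}$ given by \cref{thm:powers}. By the extension of the $q$-shuffle to series, we have
\[
A^*\shuffle_q u = \sum_{v\in A^*} v\shuffle_q u,
\]
so that $\langle A^*\shuffle_q u, w\rangle = \sum_{v\in A^*} \langle v\shuffle_q u, w\rangle$. The key observation is that for a fixed target word $w$, only finitely many $v$ can contribute: since every word in the support of $v\shuffle_q u$ has length $|v|+|u|$, only those $v$ with $|v|=|w|-|u|$ matter, and among these only the ones for which $w$ lies in the support of $v\shuffle_q u$.

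Writing $u=a_1\cdots a_k$ with $a_i\in A$, I would invoke \cref{lem:combshuffle2}, which describes $v\shuffle_q u$ with the focus on the letters of the first factor---but here it is more convenient to apply \cref{lem:combshuffle} with the roles arranged so that the letters $a_1,\ldots,a_k$ of $u$ are the distinguished ones. Concretely, each way of writing $w=v_0a_1v_1\cdots v_{k-1}a_kv_k$ (i.e.\ each occurrence of $u$ as a subword of $w$, together with a choice of $v=v_0v_1\cdots v_k$) contributes a monomial to exactly one summand $\langle v\shuffle_q u, w\rangle$, with the power of $q$ dictated by \cref{lem:combshuffle}. Summing over all $v$ therefore amounts to summing over all factorizations $w=v_0a_1\cdots v_{k-1}a_kv_k$ with $v_i\in A^*$, and the contributed exponent of $q$ is precisely $\sum_{i=1}^k i|v_i|$. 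This is exactly the sum appearing in \cref{thm:powers} for $\qbin{w}{u}$, so the two expressions coincide.

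The main obstacle is bookkeeping the power of $q$ consistently: I must verify that the exponent attached to a factorization $w=v_0a_1\cdots v_{k-1}a_kv_k$ by the shuffle (via \cref{lem:combshuffle}, which counts, for each distinguished letter, the blocks of $v$ lying to its right) agrees letter-for-letter with the exponent $\sum_{i=1}^k i|v_i|$ that \cref{thm:powers} assigns to the corresponding subword occurrence. Both count, for the $i$-th letter $a_i$ of $u$, the total number of non-distinguished letters to its right, so the weights match; making this identification clean is the heart of the argument. As an alternative route that avoids this matching entirely, I would note that one can instead argue by induction on $|u|$ using the recursive definitions \cref{eq:recdef} and \cref{eq:qshuffle} directly, but the combinatorial comparison above is the most transparent.
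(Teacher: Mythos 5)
Your proposal is correct and follows essentially the same route as the paper's proof: both expand $A^*\shuffle_q u$ as $\sum_{v\in A^*} v\shuffle_q u$, apply \cref{lem:combshuffle} to read off the contribution of each factorization $w=v_0a_1\cdots v_{k-1}a_kv_k$ as the monomial $q^{\sum_{i=1}^k i|v_i|}$, and match the resulting sum against the expansion of $\qbin{w}{u}$ given by \cref{thm:powers}. The paper phrases the same bijection in the opposite direction---fixing an occurrence of $u$ in $w$ and setting $x=w_0w_1\cdots w_k$---but the correspondence and the supporting lemmas are identical to yours.
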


\begin{proof}
  If $u$ is not a subword of $w$, then both sides of \cref{eq:coeff_shuffle} are zero. We can now assume that $u$ appears at least once as a subword of $w$. Let us fix one particular occurrence of $u=a_1\cdots a_k$ in $w$:
  \[w= w_0 a_1 w_1 \cdots w_{k-1} a_k w_k\]
  where $a_i\in A$ and $w_i\in A^*$ for all $i$. Let $x=w_0w_1\cdots w_k$. The word $w$ appears in the support of $x\shuffle_q u$. From \cref{lem:combshuffle}, this particular occurrence of $u$ contributes to the coefficient of $w$ in $x\shuffle_q u$ with a monomial $q^{\sum_{i=1}^k i|w_i|}$, which is enough to conclude with the proof using \cref{thm:powers}.
\end{proof}

To illustrate the previous proposition, given the $q$-shuffles $01 \shuffle_q 0=(q^2+q)001+1\cdot 010$ and $10 \shuffle_q 0=q^2 010+(q+1)100$, we get
\[
\langle A^*\shuffle_q 0, 010\rangle = \qbin{010}{0}=1+q^2.
\]

In view of \cref{pro:reciprocal}, if we permute the factors of the $q$-shuffle, \cref{eq:coeff_shuffle} becomes
\[\langle u\shuffle_q A^*, w\rangle = q^{|u|\cdot(|w|-|u|)}\binom{w}{u}_{1/q}.\]

\subsection{Deformed infiltration}
Inspired by \cite{Duchamp2017}, where the authors investigate various products defined by a recurrence relation, one could define several $q$-infiltrations using recurrences of the form $u\uparrow_q \varepsilon = \varepsilon \uparrow_q u = 1.u$ and 
\begin{equation}
  \label{eq:geninf}
 ua \uparrow_q vb= q^{|vb|}(u\uparrow_q vb)a+(ua\uparrow_q v)b+q^{\alpha(ua,vb)}\, \delta_{a,b}(u\uparrow_q v)a
\end{equation}
for every choice of a map $\alpha:A^*\times A^*\to\mathbb{N}$. In  \cite{Duchamp2001}, the choice is $\alpha(ua,vb)$ to be the constant function $1$. Roughly speaking, with such a choice, when building a term $q^n w$ of a monomial in the infiltration product, every application of the third term will increase the exponent of $q^n$ by $1$. So it will only record how many times the third term was used (see \cref{rem:alpha1}).

As for the $q$-shuffle, we can extend the above definition to the $q$-infiltration of a word $u\in A^*$ and a series $s\in\mathbb{N}[q]\langle\langle A^*\rangle\rangle$ by
\[u\uparrow_q s= \sum_{v\in A^*} \langle s,v\rangle\, (u\uparrow_qv)\quad\text{ and }\quad
s\uparrow_q u= \sum_{v\in A^*} \langle s,v\rangle\, (v\uparrow_qu).\]
For two series $s,t\in\mathbb{N}[q]\langle\langle A^*\rangle\rangle$, we define
\[s\uparrow_q t= \sum_{u,v\in A^*} \langle s,u\rangle\, \langle t,v\rangle\, (u\uparrow_qv).\]
As an example,
\[010 \uparrow_q 0 = (q+1) 0100 + (q^3+q^2) 0010 + (q^{\alpha(010,0)}+q^{\alpha(0,0)})\, 010\]
because, we have
\[{\color{blue}010}\uparrow_q{\color{red}0}=1\cdot {\color{blue}010}{\color{red}0}+
q\, {\color{blue}01}{\color{red}0}{\color{blue}0}+
q^2\, {\color{blue}0}{\color{red}0}{\color{blue}10}+
q^3\, {\color{red}0}{\color{blue}010}+ q^{\alpha(010,0)}\, 01{\color{cyan}0}+ q^{\alpha(0,0)}\, {\color{cyan}0}10\]
where the cyan color is used when letters are merged when using the third term in \cref{eq:geninf}.  The reader may notice on this example, that the $q$-infiltration has terms of maximal degree equal to the $q$-shuffle. This directly follows from the definition \cref{eq:geninf} whose first two terms in the recurrence relation coincide with the $q$-shuffle. We state this observation for our record. 

Recall that the degree of a polynomial $P\in \mathbb{K}\langle A^*\rangle$ is the maximal length of the words with a non-zero coefficient.
\begin{lemma}\label{lem:infiltration_main}
  Let $u,v$ be words. There exists a polynomial $P_{u,v}$ of degree less than $|u|+|v|$ such that
  \[u\uparrow_q v= u\shuffle_q v + P_{u,v}.\]
  Moreover, $P_{u,v}=0$ if and only if $u$ and $v$ are over disjoint alphabets.
\end{lemma}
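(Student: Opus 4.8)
The natural first move is to define $P_{u,v}:=u\uparrow_q v-u\shuffle_q v$ and to subtract the shuffle recurrence \cref{eq:qshuffle} from the infiltration recurrence \cref{eq:geninf}. Since the first two summands of \cref{eq:geninf} coincide with those of \cref{eq:qshuffle}, the entire shuffle part cancels and only the merge term survives, giving $P_{\varepsilon,v}=P_{u,\varepsilon}=0$ together with
\[P_{ua,vb}=q^{|vb|}(P_{u,vb}).a+(P_{ua,v}).b+q^{\alpha(ua,vb)}\,\delta_{a,b}\,(u\uparrow_q v).a,\]
where $(\cdot).a$ is the right-append operation on series. From this recurrence I read off, by easy inductions, two enabling facts: $P_{u,v}$ has nonnegative coefficients (each summand does, using that $u\uparrow_q v$ has nonnegative coefficients), and $u\uparrow_q v$ is never the zero series, since it contains $uv$ in its support (the coefficient of $uv$ in $u\shuffle_q v$ has constant term $1$ by \cref{lem:combshuffle}). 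Nonnegativity will be the engine of the whole argument, as it forbids cancellation between the three summands.

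For the degree bound I induct on $|u|+|v|$, where here \emph{degree} means the maximal word-length in the support; the base cases $u=\varepsilon$ or $v=\varepsilon$ give $P_{u,v}=0$. In the inductive step I bound the three summands separately. By induction $\deg P_{u,vb}<|u|+|vb|$ and $\deg P_{ua,v}<|ua|+|v|$, so appending a single letter keeps both of the first two terms of degree $<|ua|+|vb|$. For the third term I use that $\deg(u\uparrow_q v)\le|u|+|v|$ — this is exactly the remark that the infiltration shares the maximal degree of the shuffle, and the scalar $q^{\alpha(ua,vb)}$ is irrelevant to word-length degree — so after appending a letter it has degree at most $|u|+|v|+1<|ua|+|vb|$. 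Hence $\deg P_{ua,vb}<|ua|+|vb|$. A useful by-product, via \cref{lem:combshuffle}, is that $u\shuffle_q v$ is supported entirely in length $|u|+|v|$ whereas $P_{u,v}$ sits strictly below it, so the two have disjoint supports.

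For the equivalence, the forward direction is the short one: if $u$ and $v$ are over disjoint alphabets, then at every recursive step the processed letters $a,b$ come from the two disjoint alphabets, so $\delta_{a,b}=0$ and the merge term never contributes; since the sub-pairs $(u,vb)$ and $(ua,v)$ remain over disjoint alphabets, a straightforward induction yields $P_{u,v}=0$. For the converse I prove the contrapositive by induction on length: if $ua$ and $vb$ share a letter $c$, then $P_{ua,vb}\neq0$. If $a=b$, the merge term $q^{\alpha(ua,vb)}(u\uparrow_q v).a$ is a nonzero nonnegative series and we are done. If $a\neq b$, then either $c\in\mathrm{alph}(u)$, so the pair $(u,vb)$ still shares $c$ and induction makes the first summand nonzero, or else $c=a\in\mathrm{alph}(v)$, so $(ua,v)$ shares $a$ and induction makes the second summand nonzero. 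In either case nonnegativity prevents cancellation, so $P_{ua,vb}\neq0$.

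The crux of the proof is this converse: one must guarantee that some merge genuinely survives into the final series rather than being washed out. The only thing that could go wrong is cancellation of a positive merge contribution, and that is precisely what the nonnegativity of all coefficients excludes — which is why establishing nonnegativity at the outset is the decisive step. The remaining difficulty is purely bookkeeping, namely tracking whether the shared letter is the last letter being processed or occurs earlier, which the case split above resolves. As a sanity check, setting $q=1$ collapses the equivalence to the classical statement that the infiltration and the shuffle of two words agree exactly when their alphabets are disjoint.
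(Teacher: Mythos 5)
Your proof is correct and takes the route the paper itself intends: the paper states this lemma without a formal proof, remarking only that it ``directly follows from the definition \eqref{eq:geninf} whose first two terms in the recurrence relation coincide with the $q$-shuffle,'' and your argument is exactly that observation carried out in detail --- subtracting the two recurrences, inducting on $|u|+|v|$, and bounding the word-length degree of each summand. The one place you go beyond the paper is the ``moreover'' equivalence ($P_{u,v}=0$ if and only if the alphabets are disjoint), which the paper leaves entirely implicit and which your nonnegativity argument (no cancellation among the three summands, and $u\uparrow_q v\neq 0$) settles rigorously.
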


As we wish to recover the terms of~$\shuffle_q$ as defined in \cref{def:shuffle}, we do not have much freedom on the first two terms of the recurrence relation. Even though this first result seems promising, we start this section with a negative statement. Whatever the choice made for the function $\alpha$, the $q$-infiltration product is not associative and the Chen--Fox--Lyndon relation does not hold. In a second step, we focus on a specific choice of $\alpha(ua,vb)=|vb|$.

\begin{proposition}
  The $q$-infiltration product \eqref{eq:geninf} is not associative.
\end{proposition}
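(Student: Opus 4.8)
The plan is to disprove associativity by a single, fully explicit counterexample that works uniformly in $\alpha$. I would fix two distinct letters, say $0,1\in A$, and compare the two parenthesizations of the triple $(1,0,0)$, namely $(1\uparrow_q 0)\uparrow_q 0$ against $1\uparrow_q(0\uparrow_q 0)$, extending $\uparrow_q$ to polynomials by bilinearity as in the text. The only sub-product in which the third (merging) term of \eqref{eq:geninf} is ever triggered is $0\uparrow_q 0=(q+1)\,00+q^{\alpha(0,0)}\,0$; every other sub-product I need ($1\uparrow_q 0$, $1\uparrow_q 00$, $01\uparrow_q 0$, $10\uparrow_q 0$) has mismatched trailing letters at the top level, so any merge it contains comes solely from the inner occurrence of $0\uparrow_q 0$. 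This keeps the free parameter $\alpha$ confined to a single value.

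Next I would carry out the two expansions and read off the coefficient of the single word $01$, which is where the obstruction sits. Writing $c:=\alpha(0,0)$, a direct application of \eqref{eq:geninf} gives $01\uparrow_q 0=(q^2+q)\,001+q^{c+1}\,01+010$, so the left-associated product, being $q\,(01\uparrow_q 0)+(10\uparrow_q 0)$ and with $10\uparrow_q 0$ contributing no $01$, has $01$-coefficient $q^{c+2}$. On the other side, $1\uparrow_q 0=q\,01+10$ and $1\uparrow_q 00$ contains no $01$, so the term $q^{c}(1\uparrow_q 0)$ yields $01$-coefficient $q^{c+1}$. Since one single coefficient already distinguishes the two polynomials, this suffices: the coefficient of $01$ is $q^{\alpha(0,0)+2}$ on the left and $q^{\alpha(0,0)+1}$ on the right.

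The crux, and the one place where care is genuinely required, is the universal quantifier over $\alpha$. A single test generally constrains only finitely many values of $\alpha$, and these can frequently be tuned to restore the identity; for instance the symmetric triple $(0,0,1)$ merely forces $\alpha(0,10)=\alpha(0,0)$, which is harmless, and likewise here the $10$-coefficients ($q^{\alpha(10,0)}$ versus $q^{c}$) are a red herring that the freedom in $\alpha(10,0)$ could repair. What makes $(1,0,0)$ decisive is that the discrepancy lives in a \emph{pure monomial} whose exponent involves only $\alpha(0,0)$, with a fixed gap of exactly $1$ arising from the extra factor $q$ that the outer shuffle step applies to the merged letter when the mismatched letter $1$ is processed first. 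No value of $\alpha(0,0)\in\mathbb{N}$ can equate $q^{\alpha(0,0)+2}$ with $q^{\alpha(0,0)+1}$, so associativity fails for every choice of $\alpha$. I would therefore stress the coefficient of $01$ rather than that of $10$, and keep the two outer/inner letters distinct, since the phenomenon rests on the left--right asymmetry of $\uparrow_q$ and hence needs $\#A\ge 2$.
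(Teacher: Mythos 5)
Your proof is correct and takes essentially the same approach as the paper's: fix one concrete triple, expand both parenthesizations by bilinearity, and observe that the coefficient of a single word (here $01$) is a pure monomial in each, with exponents differing by exactly $1$ independently of $\alpha$, so no choice of $\alpha\colon A^*\times A^*\to\mathbb{N}$ can restore associativity. The only difference is cosmetic: the paper uses the triple $(01,0,01)$, obtaining coefficients $q^{1+2\alpha(0,0)+\alpha(01,01)}$ versus $q^{2\alpha(0,0)+\alpha(01,01)}$ for the word $01$, whereas your triple $(1,0,0)$ is slightly smaller and confines the dependence to the single value $\alpha(0,0)$.
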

\begin{proof}
  Take $f=01$, $g=0$, $h=01$. The polynomial coefficients of $01$ in $(f\uparrow_q g)\uparrow_q h$ and $f\uparrow_q(g\uparrow_q h)$ are respectively
  $$q^{1+2\alpha (0,0)+\alpha (01,01)} \text{ and }
  q^{2\alpha (0,0)+\alpha (01,01)}.$$
  This leads to a contradiction.  
\end{proof}

For classical binomial coefficients of words, resulting of the associativity of the infiltration product, the following Chen--Fox--Lyndon relation holds true
\begin{equation}
  \label{eq:cfl}
  \forall f,g,h\in A^*,\quad \binom{h}{f}\binom{h}{g}=\sum_{w\in A^*} \langle f\uparrow g,w\rangle \binom{h}{w}.
\end{equation}
There is no such relation for $\uparrow_q$ defined by \cref{eq:geninf}.
As a counterexample, take $h=01010$, $f=010$, $g=1$. On the one hand, we have 
\[\qbin{h}{f}\qbin{h}{g}=q^9+2 q^7+2 q^5+2 q^3+q\]
which only contains odd powers of $q$. 
On the other hand, we get
\[ f\uparrow_q g=q^{1+\alpha(01,1)} 010 + q^3\, 1010+ (q + q^2)\, 0110 +  1\cdot 0101\]
and 
\[\sum_{w\in A^*} \langle f\uparrow_q g,w\rangle \qbin{h}{w}=2 q^3 + 2 q^4 + q^{1+\alpha(01,1)} (1+q^2+q^4+q^6)\]
which contains consecutive powers of $q$.

We now study the choice $\alpha(ua,vb)=|vb|$ in the definition to record some more information into the corresponding exponent.
\begin{definition} Let $u,v$ be two finite words over an alphabet $A$ and $a,b\in A$.
  The {\em $q$-infiltration} of $u$ and $v$ is defined recursively by
  \[u \Uparrow_q \varepsilon = \varepsilon \Uparrow_q u = 1.u; \quad 
    ua \Uparrow_q vb= (ua\Uparrow_q v)b +q^{|vb|}\left[ (u\Uparrow_q vb)a+\delta_{a,b}(u\Uparrow_q v)a\right].\]
  We use a specific symbol $\Uparrow_q$ to distinguish from the general situation \cref{eq:geninf}.
\end{definition}
Reconsidering the same example as above
\[010 \Uparrow_q 0 = (q+1) 0100 + (q^3+q^2) 0010 +  (q^3+q)\, 010.\]
\begin{remark}\label{rem:alpha1}
   Had we chosen the map $\alpha$ to be the constant $1$, we would obtain $2q$ as coefficient of $010$ just meaning that $010$ can be obtained in two ways applying once the third term of the recurrence.
  Here, the exponents $q^3$ and $q$ give the information where the merge of letters $0$ occurred.
\end{remark}

Recall that the valuation of a series $s\in \mathbb{K}\langle\langle A^*\rangle\rangle$,
denoted by $\val(s)$, is the least integer $n$ such that the support of $s$ contains a monomial of degree $n$; by convention $\val(0)=+\infty$.  If we permute the order of the two factors, the $q$-infiltration being non-symmetric, we may get a simpler expression. One can compare this result with \cref{pro:reciprocal}.

\begin{proposition}
  Let $u,v,w$ be words such that $|u|\ge |v|$.
  There exists a polynomial $R_{u,v}$ such that $\val(R_{u,v})>|u|$ and
 \[u\Uparrow_q v= q^{|v|(|v|+1)/2}\qbin{u}{v} u + R_{u,v}.\]
  There exists a polynomial $S_{v,u}$ such that $\val(S_{v,u})>|u|$ and 
   \[v\Uparrow_q u=  q^{|v|(|u|-\frac{|v|-1}{2})}  \binom{u}{v}_{1/q} u + S_{u,v}.\]
  In particular, if $|u|\ge |v|,|w|$, then
\[\langle (u\Uparrow_q v)\Uparrow_q w,u\rangle=q^{|w|(|w|+1)/2+|v|(|v|+1)/2}\qbin{u}{w}\qbin{u}{v}.\]
\end{proposition}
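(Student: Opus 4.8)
The plan is to reduce all three assertions to one computation: the coefficient of the length-$|u|$ word $u$ in an infiltration product, i.e.\ the ``full merge'' term. The starting observation, proved by a one-line induction on the defining recursion of $\Uparrow_q$, is that every word in the support of $x\Uparrow_q y$ admits both $x$ and $y$ as subwords; hence every such word has length at least $\max(|x|,|y|)$, and when the shorter factor embeds in the longer one the unique word of that minimal length is the longer factor itself. Applied to $(u,v)$ and to $(v,u)$ with $|u|\ge|v|$, this shows $u$ is the only length-$|u|$ word occurring in $u\Uparrow_q v$ and in $v\Uparrow_q u$; so once the coefficient of $u$ is identified, the remainders collect only strictly longer words and $\val(R_{u,v}),\val(S_{u,v})>|u|$ follow automatically. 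It remains to pin down the two leading coefficients.

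For the first identity I would set $C(u,v):=\langle u\Uparrow_q v,u\rangle$ and read its recursion directly off the definition. In $ua\Uparrow_q vb=(ua\Uparrow_q v)b+q^{|vb|}\bigl[(u\Uparrow_q vb)a+\delta_{a,b}(u\Uparrow_q v)a\bigr]$ the first summand contributes nothing to the coefficient of $ua$ (all its words have length $>|u|+1$), and the bracket yields $C(ua,vb)=q^{|vb|}\bigl[C(u,vb)+\delta_{a,b}C(u,v)\bigr]$, with $C(u,\varepsilon)=1$ and $C(\varepsilon,v)=\delta_{v,\varepsilon}$. I would then check that $D(u,v):=q^{|v|(|v|+1)/2}\qbin{u}{v}$ obeys the same recursion: substituting the $q$-Pascal rule \eqref{eq:recdef} into $D$ and using $\tfrac{|vb|(|vb|+1)}2-\tfrac{|v|(|v|+1)}2=|vb|$ turns both the $q^{|vb|}$-weight of $\qbin{u}{vb}$ and the prefactor of $\delta_{a,b}\qbin{u}{v}$ into exactly $q^{|vb|}$ times $D(u,vb)$ and $D(u,v)$. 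A double induction on $|u|+|v|$ gives $C=D$, which is the first identity.

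For the second identity the philosophy is the same, but the bookkeeping is the crux. Writing $v=v'a$, $u=u'b$ and $E(v,u):=\langle v\Uparrow_q u,u\rangle$, the coefficient of $u'b$ picks up the first and third summands, giving $E(v'a,u'b)=E(v'a,u')+q^{|u|}\delta_{a,b}E(v',u')$ with $E(\varepsilon,u)=1$ (the middle summand vanishes, as $u'$ is too short to occur in $v'\Uparrow_q u$). I would verify that $G(v,u):=q^{|v|(|u|-(|v|-1)/2)}\binom{u}{v}_{1/q}$ satisfies this: peeling the last letters of $u$ and $v$ in the $q$-Pascal rule gives $\qbin{u'b}{v'a}=q^{|v|}\qbin{u'}{v}+\delta_{a,b}\qbin{u'}{v'}$, and after $q\mapsto 1/q$ the factor $q^{-|v|}$ recombines with the exponent gap $|v|(|u|-\tfrac{|v|-1}2)-|v|(|u'|-\tfrac{|v|-1}2)=|v|$ to reproduce $G(v,u')$, while the arithmetic identity $|v|(|u|-\tfrac{|v|-1}2)-|v'|(|u'|-\tfrac{|v'|-1}2)=|u|$ reproduces $q^{|u|}G(v',u')$. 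I expect this exponent arithmetic --- governed by the $q\mapsto 1/q$ reciprocity that makes $v\Uparrow_q u$ the ``reversed'' companion of $u\Uparrow_q v$, cf.\ \cref{cor:reversal} and \cref{pro:reciprocal} --- to be the main obstacle, since the half-integer terms $|v|(|v|-1)/2$ are easy to mismanage. Induction then yields $E=G$.

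Finally, the displayed ``in particular'' needs only the first identity, applied twice, together with bilinearity. Expanding $(u\Uparrow_q v)\Uparrow_q w=\sum_x\langle u\Uparrow_q v,x\rangle\,(x\Uparrow_q w)$ and taking the coefficient of $u$, every $x\neq u$ in the support of $u\Uparrow_q v$ has $|x|>|u|\ge|w|$, so all words of $x\Uparrow_q w$ are longer than $u$ and contribute nothing; only $x=u$ survives, whence $\langle(u\Uparrow_q v)\Uparrow_q w,u\rangle=\langle u\Uparrow_q v,u\rangle\,\langle u\Uparrow_q w,u\rangle$. Substituting the first identity into each factor gives $q^{|w|(|w|+1)/2+|v|(|v|+1)/2}\qbin{u}{w}\qbin{u}{v}$, as claimed.
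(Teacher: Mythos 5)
Your proposal is correct, and it reaches the two coefficient formulas by a genuinely different route than the paper. The paper computes $\langle u\Uparrow_q v,u\rangle$ and $\langle v\Uparrow_q u,u\rangle$ directly and combinatorially: fixing an occurrence $u=u_0a_1\cdots u_{k-1}a_ku_k$ of $v$ in $u$, it traces the defining recursion from right to left, observes that in $u\Uparrow_q v$ each letter of $u_i$ and also the merged letter $a_i$ contribute a factor $q^i$ (giving the exponent $\sum_i i(|u_i|+1)$, i.e.\ $q^{|v|(|v|+1)/2}\qbin{u}{v}$ via \cref{thm:powers}), while in $v\Uparrow_q u$ only the merged letters contribute, yielding $q^{k|u|-k(k-1)/2}\binom{u}{v}_{1/q}$. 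You instead derive closed recurrences for the diagonal coefficients $C(u,v)=\langle u\Uparrow_q v,u\rangle$ and $E(v,u)=\langle v\Uparrow_q u,u\rangle$ --- using a length argument to kill the extraneous summands of the recursion --- and then verify by double induction that the claimed closed forms obey the same recurrences and base cases via the Pascal rule \cref{eq:recdef}; your exponent identities $\tfrac{|vb|(|vb|+1)}{2}-\tfrac{|v|(|v|+1)}{2}=|vb|$ and $|v|\bigl(|u|-\tfrac{|v|-1}{2}\bigr)-|v'|\bigl(|u'|-\tfrac{|v'|-1}{2}\bigr)=|u|$ both check out, and the induction is safe even when the recursion leaves the regime $|u|\ge|v|$, since then both sides vanish. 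Each approach has its merits: the paper's occurrence-level computation is shorter and explains \emph{where} the correcting powers of $q$ come from (including the $1/q$-reciprocity flavor of the second formula), but its ``processing from right to left'' bookkeeping is informal; your recurrence-verification is more mechanical and, usefully, makes explicit two points the paper merely asserts --- the inductive proof that every word in the support of $x\Uparrow_q y$ contains both $x$ and $y$ as subwords (whence $\val(R_{u,v}),\val(S_{u,v})>|u|$ and the uniqueness of the length-$|u|$ term), and the derivation of the ``in particular'' clause by bilinearity, where only $x=u$ survives in $\sum_x\langle u\Uparrow_q v,x\rangle\,\langle x\Uparrow_q w,u\rangle$ because $|x|>|u|\ge|w|$ forces $\langle x\Uparrow_q w,u\rangle=0$ for all other support words.
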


\begin{proof}
  The word $u$ appears in the support of $u\Uparrow_q v$ only if $v=a_1\cdots a_k$ is a subword of $u$, $a_i\in A$. Every other element in the support has length larger than $|u|$. We process one factorization of the form
  $u=u_0a_1\cdots u_{k-1}a_ku_k$ from right to left. From the definition of $\Uparrow_q$, each letter of $u_i$ provides a factor $q^{|a_1\cdots a_i|}$ and this is also the case with $a_i$. 
  So the coefficient of $u$ in $u\Uparrow_q v$ is given by
  \[
    \sum_{\substack{u_0,\ldots,u_k\in A^*\\
        u=u_0a_1\cdots u_{k-1}a_ku_k}}
    q^{\sum_{i=1}^k i(|u_i|+1)}.\]

  The reasoning is similar for $v\Uparrow_q u$. When processing one factorization of the form
  $u=u_0a_1\cdots u_{k-1} a_ku_k$ from right to left, only the letter $a_i$ provides a factor with exponent $|u|-|u_ia_{i+1}\cdots u_{k-1}a_ku_k|$. 
   So the coefficient of $u$ in $v\Uparrow_q u$ is given by
  \[
    \sum_{\substack{u_0,\ldots,u_k\in A^*\\
        u=u_0a_1\cdots u_{k-1}a_ku_k}}
    q^{k|u|-\sum_{i=1}^k i|u_i|-k(k-1)/2}= q^{k|u|-\frac{k(k-1)}{2}} \binom{u}{v}_{1/q} .\]
  \end{proof}

\section{Towards \texorpdfstring{$p$}{p}-groups}\label{sec:pgroup}

In this section, we introduce what we call the $(u,\mathfrak{M})$-binomial equivalence relation. In particular, we consider a congruence refining  it\footnote{Recall that an equivalence relation $\cong$ is a {\em refinement} of $\sim_{u,\mathfrak{M}}$ if, for all words $w_1,w_2$,
$w_1\cong w_2$ implies $w_1\sim_{u,\mathfrak{M}}w_2$.
We also say that $\sim_{u,\mathfrak{M}}$ is {\em coarser} than $\cong$.}. This way, the quotient of $A^*$ by such a congruence is a monoid. We further study this structure and determine when it is a group. Moreover, we obtain information about its order. As a corollary, we generalize Eilenberg's theorem characterizing $p$-group languages.

\begin{definition}\label{def:recognizable}
  The monoid $M$ {\em recognizes} the language $L\subseteq A^*$ if there exist a monoid morphism $\mu:A^*\to M$ and a subset $S$ of $M$ such that $L=\mu^{-1}(S)$. A language is {\em recognizable} if it is recognized by a finite monoid.
\end{definition}
It is a well-known result that a language is recognizable if and only if it is regular, i.e., there exists a \emph{deterministic finite automaton (DFA)} that recognizes the language \cite[Thm.~10.2.5]{Lawson2004}.

Let $p$ be a prime. A language recognized by a $p$-group is a {\em $p$-group language}. We follow Eilenberg's classical textbook \cite{Eilenberg1976} where it is shown that a language is a $p$-group language if and only if it is a Boolean combination of languages of the form
\begin{equation}
  \label{eq:pl1}
  L_{v,r,p}:=\left\{u\in A^*\mid \binom{u}{v}\equiv r\pmod{p}\right\}.
\end{equation}

Let $\mathbb{F}_p=\mathbb{Z}/p\mathbb{Z}$ be the field of integers modulo $p$ and let $\mathfrak{M}$ be a non-zero polynomial of degree~$d\ge 1$ in $\mathbb{F}_p[q]$. Since $q$-deformed binomial coefficients are polynomials in $\mathbb{Z}[q]$, their integer coefficients can be reduced modulo~$p$ to get a polynomial in $\mathbb{F}_p[q]$. Similarly to \cref{eq:pl1}, we will consider the languages of the form
\begin{equation}
  \label{eq:plang}
  L_{v,\mathfrak{R},\mathfrak{M}}:=\left\{u\in A^*\mid \qbin{u}{v}\equiv \mathfrak{R}\pmod{\mathfrak{M}}\right\},
\end{equation}
where $\mathfrak{R}\in\mathbb{F}_p[q]$ is a polynomial of degree less than $\deg(\mathfrak{M})$.

As a consequence of our treatment, we get a generalization of Eilenberg's theorem: a language is a $p$-group language if and only if it is a Boolean combination of languages of the form \cref{eq:plang} where $\mathfrak{M}=a(q-1)^d$ for some integer $d\ge 1$ and non-zero $a\in\mathbb{F}_p$.

In order to lighten the proofs of our main results, we gather all required preliminary definitions and results about polynomial algebra in the next subsection.

\subsection{Some polynomial algebra}\label{sec:pol.alg}

We let $\mathbb{K}$ denote the finite ring $\mathbb{F}_p[q]/\langle\mathfrak{M}\rangle$ of order $p^d$ and we let $\mathbb{K}^\star$ denote the
multiplicative group made of the units. Note that $\mathbb{K}$ is a field if and only if $\mathfrak{M}$ is irreducible over $\mathbb{F}_p$. As
$(\mathbb{K},\cdot)$ is a finite semigroup with identity~$1$, for each $\mathfrak{P}\in \mathbb{K}$, two elements of the sequence
$1,\mathfrak{P},\mathfrak{P}^{2},\ldots$ will be equal. Considering the first repetition, the least positive integers $i\ge 0$ and $k\ge 1$ such that
 \[
 \mathfrak{P}^{i} = \mathfrak{P}^{i+k}
\]
\begin{center}
  \begin{tikzpicture}[->,>=stealth',shorten >=1pt,auto,node distance=1.6cm,
                thick,main node/.style={}]

  \node[main node] (1) {1};
  \node[main node] (2) [right of=1, node distance=1.2cm] {$\mathfrak{P}$};
  \node[main node] (3) [right of=2, node distance=1.2cm] {$\mathfrak{P}^2$};
   \node[main node] (3b) [right of=3, node distance=.8cm] {$\cdots$};
  \node[main node] (4) [right of=3b] {$\mathfrak{P}^{i-1}$};
  \node[main node] (5) [right of=4] {$\mathfrak{P}^i$};
  \node[main node] (6) [above right of=5] {$\mathfrak{P}^{i+1}$};
  \node[main node] (7) [below right of=5] {$\mathfrak{P}^{i+k-1}$};
  \node[main node] (8) [below right of=6] {$\vdots$};
  \path
    (1) edge node {} (2)
    (2) edge node {} (3)
    (3b) edge node {} (4)
    (4) edge node {} (5)
    (5) edge [bend left] node {} (6)
    (6) edge [bend left] node {} (8)
    (8) edge [bend left] node {} (7)
    (7) edge [bend left] node {} (5);      
  \end{tikzpicture}
\end{center}
are respectively the \emph{index} and the \emph{period} of $\mathfrak{P}$ and are denoted by $\ind(\mathfrak{P})$ and $\per(\mathfrak{P})$ (the semigroup~$\mathbb{K}$ being clear from the context). Note that for an arbitrary semigroup (without identity element), the usual definition of index is to consider an integer $i\ge 1$ because $\mathfrak{P}^0$ is not well-defined. For our record, we make the following trivial observation:
\begin{fact}\label{fac:trivial}
For all $m,n\in \N$, we have $\mathfrak{P}^{m} = \mathfrak{P}^{n}$ in $\mathbb{K}$ if and only if $m=n$ or $m\equiv n\pmod{\per(\mathfrak{P})}$ with
$m,n \geq \ind(\mathfrak{P})$.
\end{fact}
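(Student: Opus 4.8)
Write $i=\ind(\mathfrak{P})$ and $k=\per(\mathfrak{P})$. The statement is the familiar description of the $\rho$-shaped orbit of $\mathfrak{P}$ drawn just above it: a tail $1,\mathfrak{P},\ldots,\mathfrak{P}^{i-1}$ of distinct elements feeding into a cycle $\mathfrak{P}^{i},\ldots,\mathfrak{P}^{i+k-1}$ of length exactly $k$. The plan is to first settle the easy ``if'' direction by propagating the defining relation, then to establish that the exponents in $\{0,1,\ldots,i+k-1\}$ yield pairwise distinct powers, and finally to deduce the ``only if'' direction from this injectivity.

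For the ``if'' direction, starting from $\mathfrak{P}^{i}=\mathfrak{P}^{i+k}$ I would multiply both sides by $\mathfrak{P}^{j-i}$ to obtain $\mathfrak{P}^{j}=\mathfrak{P}^{j+k}$ for every $j\ge i$, and then iterate to get $\mathfrak{P}^{n}=\mathfrak{P}^{n+tk}$ for all $n\ge i$ and $t\ge 0$. Together with the trivial case $m=n$, this covers both clauses of the right-hand side.

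The core of the argument is injectivity on the window $\{0,\ldots,i+k-1\}$, which I would get from two observations. First, by minimality of $i$, any coincidence $\mathfrak{P}^{a}=\mathfrak{P}^{b}$ with $a<b$ forces $a\ge i$; otherwise $a$ would be a repeating index strictly smaller than $i$. Second, no two distinct exponents in $\{i,\ldots,i+k-1\}$ can collide: such a collision reads $\mathfrak{P}^{a}=\mathfrak{P}^{a+c}$ with $a\ge i$ and $1\le c\le k-1$, whence $\mathfrak{P}^{j}=\mathfrak{P}^{j+c}$ for all $j\ge a$. Choosing $N\ge a$ with $N\equiv i\pmod{k}$ and using the period-$k$ relation from the first step to rewrite $\mathfrak{P}^{N}$ as $\mathfrak{P}^{i}$ and $\mathfrak{P}^{N+c}$ as $\mathfrak{P}^{i+c}$, I would obtain $\mathfrak{P}^{i}=\mathfrak{P}^{i+c}$ with $c<k$, contradicting minimality of $k$. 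For the ``only if'' direction, suppose $\mathfrak{P}^{m}=\mathfrak{P}^{n}$ with $m\ne n$, say $m>n$. The first observation gives $n\ge i$, hence $m\ge i$ as well. Letting $m',n'\in\{i,\ldots,i+k-1\}$ be the representatives with $m'\equiv m$ and $n'\equiv n\pmod{k}$, the first step yields $\mathfrak{P}^{m'}=\mathfrak{P}^{m}=\mathfrak{P}^{n}=\mathfrak{P}^{n'}$; injectivity on the window then forces $m'=n'$, i.e., $m\equiv n\pmod{k}$, as required.

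I expect the second observation to be the only delicate point: since $\mathbb{K}$ carries no cancellation, I cannot simply divide by $\mathfrak{P}^{a}$ to transport a collision down to the anchor $i$. The trick of passing to a large exponent $N\equiv i\pmod{k}$ and rewriting both sides through the already established period-$k$ relation is what circumvents the missing cancellation and produces the contradiction with minimality of $k$; everything else is routine.
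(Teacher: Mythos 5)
The paper offers no proof of this Fact at all: it is recorded as a ``trivial observation'' about the eventually periodic sequence $1,\mathfrak{P},\mathfrak{P}^2,\ldots$ in the finite ring $\mathbb{K}$, so there is no argument of the authors' to compare yours against. Your proof is correct and complete. The ``if'' direction by propagating $\mathfrak{P}^{i}=\mathfrak{P}^{i+k}$ forward, the injectivity of $n\mapsto\mathfrak{P}^{n}$ on $\{0,\ldots,\ind(\mathfrak{P})+\per(\mathfrak{P})-1\}$ via the two minimality observations, and the reduction of arbitrary exponents into that window are exactly the standard structure theory of a finite monogenic semigroup, which is presumably why the authors felt entitled to skip it. You also correctly identify the one genuine subtlety: $\mathbb{K}$ need not admit cancellation (indeed $\mathfrak{P}$ may be a zero divisor, e.g.\ $\mathfrak{P}=q$ when $\val(\mathfrak{M})>0$), so a collision $\mathfrak{P}^{a}=\mathfrak{P}^{a+c}$ inside the cycle cannot be divided down to the anchor $\mathfrak{P}^{i}$; your trick of pushing both sides up to a common large exponent $N\equiv i\pmod{\per(\mathfrak{P})}$ and rewriting via the already-established period relation is the right way around this, and it is where a careless ``divide by $\mathfrak{P}^{a-i}$'' argument would fail.
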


In our setting, observe that $\ind(\mathfrak{P})=0$ if and only if $\mathfrak{P}$ is a unit. In particular, the index and period of the polynomial $q$ will have a central role in our results. The index of $q$ is directly linked to the polynomial $\mathfrak{M}$, as we shall see in what follows.

\begin{definition}\label{def:val}
The \emph{valuation} $\val(\mathfrak{P})$ of a polynomial $\mathfrak{P}\in\mathbb{F}[q]$, where $\mathbb{F}$ is a field, is the largest power of $q$ dividing $\mathfrak{P}$, i.e., $\val(\mathfrak{P}) = \max\{ \ell \colon q^{\ell} \vert \mathfrak{P}(q) \}$.
In particular, $\val(0)=+\infty$.
\end{definition}

The following result generalizes the previous lemma. We use again the notation $[q^i]P$ to denote coefficients of polynomials, see right before \cref{prop:sum_over_subwords}.

\begin{proposition}\label{prop:ind(q)}
In $\mathbb{F}_p[q]/{\langle\mathfrak{M}\rangle}$, we have $\ind(q) = \val(\mathfrak{M})$.
\end{proposition}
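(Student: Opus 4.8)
The plan is to compute the index of $q$ in the quotient ring $\mathbb{K} = \mathbb{F}_p[q]/\langle\mathfrak{M}\rangle$ directly from the definition of index, namely as the smallest $i\ge 0$ for which $q^i = q^{i+k}$ in $\mathbb{K}$ for some period $k\ge 1$. Translating this back to $\mathbb{F}_p[q]$, the condition $q^i \equiv q^{i+k} \pmod{\mathfrak{M}}$ means $\mathfrak{M}$ divides $q^i(q^k - 1) = q^{i+k} - q^i$. The key observation is that $q^k - 1$ is coprime to $q$ (its constant term is $-1 \neq 0$), so the only way for $\mathfrak{M}$ to contribute a factor of $q$ is through the explicit power $q^i$. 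This is precisely where $\val(\mathfrak{M})$ enters, since $\val(\mathfrak{M})$ is by \cref{def:val} the largest power of $q$ dividing $\mathfrak{M}$.

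First I would write $\mathfrak{M} = q^{\val(\mathfrak{M})}\,\mathfrak{N}$ where $\mathfrak{N}\in\mathbb{F}_p[q]$ satisfies $\val(\mathfrak{N}) = 0$, i.e., $q \nmid \mathfrak{N}$. Then for any $i,k$ the divisibility $\mathfrak{M} \mid q^i(q^k-1)$ is equivalent to $q^{\val(\mathfrak{M})}\mathfrak{N} \mid q^i(q^k-1)$. Since $\gcd(\mathfrak{N}, q) = 1$ and $\gcd(q^k-1, q) = 1$, I would factor the divisibility into two independent conditions: the $q$-part requires $\val(\mathfrak{M}) \le i$ (because the factor $q^k - 1$ contributes no powers of $q$, all $\val(\mathfrak{M})$ powers of $q$ in $\mathfrak{M}$ must be absorbed by $q^i$), and the remaining part requires $\mathfrak{N} \mid q^k - 1$.

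From this splitting the conclusion follows. For any fixed admissible $k$ (one for which $\mathfrak{N} \mid q^k - 1$; such $k$ exists because $q$ is a unit in $\mathbb{F}_p[q]/\langle\mathfrak{N}\rangle$, as $\gcd(q,\mathfrak{N})=1$, so some power $q^k\equiv 1$), the smallest $i$ making $q^i \equiv q^{i+k}$ hold is exactly $i = \val(\mathfrak{M})$. Conversely, no smaller $i$ can work for any period $k$, since the $q$-part condition $\val(\mathfrak{M}) \le i$ is forced regardless of $k$. Hence the least index over all valid $(i,k)$ pairs is $\ind(q) = \val(\mathfrak{M})$, which is what we want. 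I would also note that this recovers the earlier special case: $\ind(q) = 0$ iff $\val(\mathfrak{M}) = 0$ iff $q \nmid \mathfrak{M}$ iff $q$ is a unit, consistent with the remark following \cref{fac:trivial}.

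The main obstacle, though a mild one, is arguing carefully that minimizing $i$ and verifying the existence of a period decouple cleanly. One must make sure that the smallest $i$ is determined by the $q$-adic part alone and does not interact with the choice of $k$: the point is that the condition $\mathfrak{N}\mid q^k-1$ constrains only $k$ and never reduces the required lower bound on $i$, because $q^k - 1$ is always coprime to $q$ and so never supplies any of the needed factors of $q$. Making this independence explicit — rather than merely plausible — is the one place where the proof needs a genuine (if short) argument, and it is exactly the content that generalizes \cref{fac:trivial}.
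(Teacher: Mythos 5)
Your proof is correct, and it takes a genuinely different route from the paper's. You work in the polynomial ring itself: factoring $\mathfrak{M}=q^{\val(\mathfrak{M})}\mathfrak{N}$ with $q\nmid\mathfrak{N}$, you split the divisibility $\mathfrak{M}\mid q^i(q^k-1)$ into the two independent conditions $i\ge\val(\mathfrak{M})$ and $\mathfrak{N}\mid q^k-1$ (legitimate in the PID $\mathbb{F}_p[q]$, since $q^k-1$ is coprime to $q$ and $\mathfrak{N}$ is coprime to $q^i$), and you obtain an admissible period $k$ from the finiteness of the unit group of $\mathbb{F}_p[q]/\langle\mathfrak{N}\rangle$. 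The paper instead argues inside the quotient ring: after treating the monomial case $\mathfrak{M}=aq^d$ separately, it shows that multiplication by $q$ is a bijection $\varphi$ of the set $S$ of residues supported in degrees $\val(\mathfrak{M}),\dots,\deg(\mathfrak{M})-1$ (with an explicit inverse obtained from Euclidean division), notes that $q^j\bmod\mathfrak{M}\in S$ for all $j\ge\val(\mathfrak{M})$ so that no repetition can involve an exponent below $\val(\mathfrak{M})$, and then uses pigeonhole on $S$ together with injectivity of $\varphi$ to pull a repetition back to $q^{\val(\mathfrak{M})}$. Your argument is more uniform---no case split, the monomial case being simply the case where $\mathfrak{N}$ is constant---and it yields strictly more information: the period is identified exactly as the multiplicative order of $q$ modulo $\mathfrak{N}$, which would, for instance, streamline the proof of \cref{prop:order_of_q_power_of_p}. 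What the paper's computation buys is self-containedness (only Euclidean division, no appeal to unique factorization or Bézout) and an explicit picture of the eventual cycle among the powers of $q$. One cosmetic remark: when $\mathfrak{M}=aq^d$, your $\mathfrak{N}$ is a nonzero constant and $\mathbb{F}_p[q]/\langle\mathfrak{N}\rangle$ is the zero ring, so the cleaner phrasing of the existence step in that degenerate case is simply that a unit divides $q^k-1$ for every $k\ge1$; this does not affect the validity of the argument.
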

\begin{proof}
  If $\mathfrak{M}=a q^d$ with $a \neq 0$, then the sequence of powers of $q$ modulo $\mathfrak{M}$ is of the form $1,q,\ldots,q^{d-1},0,0,\ldots$ where the first $d$ terms are pairwise distinct. Hence $\ind(q)=d = \val(\mathfrak{M})$.
  
  Assume now that $\mathfrak{M}$ has degree~$d\ge 1$ and at least two non-zero terms. Observe that the map $\varphi:\mathfrak{P}\mapsto q \mathfrak{P}\pmod{\mathfrak{M}}$ is a bijection on the set $S=\left\{\sum_{i=\val(\mathfrak{M})}^{\deg(\mathfrak{M})-1} f_iq^i\mid f_{\val(\mathfrak{M})},\ldots,f_{\deg(\mathfrak{M})-1}\in\mathbb{F}_p \right\}$. Indeed, let $\mathfrak{P}\in S$. The remainder of the Euclidean division of $q\mathfrak{P}$ by $\mathfrak{M}$ is $q\mathfrak{P}-\frac{[q^{\deg{\mathfrak{M}}-1}]\mathfrak{P}}{[q^{\deg{\mathfrak{M}}}]\mathfrak{M}}\mathfrak{M}$. The inverse of $\varphi$ is given by $\mathfrak{P}\mapsto q^{-1} \left(\mathfrak{P}-\frac{[q^{\val(\mathfrak{M})}]\mathfrak{P}}{[q^{\val(\mathfrak{M})}]\mathfrak{M}}\mathfrak{M}\right)$. For all $j\ge\val(\mathfrak{M})$, $q^j\pmod{\mathfrak{M}}$ belongs to~$S$ (this follows directly from the fact that the map $\varphi$ maps $S$ to $S$). Hence, $q^i\not\equiv q^j\pmod{\mathfrak{M}}$ for $0\le i<\val(\mathfrak{M})\le j$. Finally, since $S$ is finite, there exist $i>j\ge 0$ such that $q^{i+\val(\mathfrak{M})}\equiv q^{j+\val(\mathfrak{M})} \pmod{\mathfrak{M}}$. Since 
  $\varphi$ is a bijection on $S$, $q^{i-j+\val(\mathfrak{M})}\equiv q^{\val(\mathfrak{M})} \pmod{\mathfrak{M}}$.
\end{proof} 
 
The following proposition will be important when discussing $p$-groups in the following subsection.

\begin{proposition}\label{prop:order_of_q_power_of_p}
The monomial $q$ has $\ind(q) = 0$ and $\per(q) = p^t$ for some $t>0$ in $\mathbb{F}_p[q]/\langle\mathfrak{M}\rangle$ if and only if $\mathfrak{M}$ is of the form $a(q-1)^d$ for some $a \in \mathbb{F}_p^\star$ and $d>0$.
\end{proposition}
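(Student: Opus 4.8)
The plan is to reduce both conditions to divisibility statements about powers of $q-1$, using the single identity $(q-1)^{p^t}=q^{p^t}-1$ valid over $\mathbb{F}_p$ for every $t\ge 0$ (an iterated application of the Frobenius identity $(q-1)^p=q^p-1$, together with $(-1)^p=-1$ in $\mathbb{F}_p$). First I would dispatch the index condition: by \cref{prop:ind(q)} we have $\ind(q)=\val(\mathfrak{M})$, so $\ind(q)=0$ is equivalent to $\val(\mathfrak{M})=0$, i.e.\ $q\nmid\mathfrak{M}$, which is exactly the statement that $q$ is a unit of $\mathbb{K}$. Under this hypothesis $\per(q)$ is simply the multiplicative order of $q$ in $\mathbb{K}^\star$, since the sequence of powers is then purely periodic from $q^0=1$ (cf.\ \cref{fac:trivial}). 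Both directions will be argued assuming $q$ is a unit.

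For the forward implication, suppose $\per(q)=p^t$ with $t>0$, so that $q^{p^t}\equiv 1\pmod{\mathfrak{M}}$. The displayed identity rewrites this as $\mathfrak{M}\mid (q-1)^{p^t}$. Since $\mathbb{F}_p[q]$ is a UFD and $q-1$ is (up to units) the only irreducible dividing $(q-1)^{p^t}$, any divisor of $(q-1)^{p^t}$ has the form $a(q-1)^d$ with $a\in\mathbb{F}_p^\star$. Hence $\mathfrak{M}=a(q-1)^d$, and $d=\deg(\mathfrak{M})\ge 1$ gives $d>0$ for free, so the right-hand condition is matched with no extra work.

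For the converse, assume $\mathfrak{M}=a(q-1)^d$ with $d>0$. Then $\val(\mathfrak{M})=0$ because $(q-1)(0)=-1\neq 0$, so $\ind(q)=0$ by \cref{prop:ind(q)} and $q$ is a unit. Writing $q=1+(q-1)$ and observing that $q-1$ is nilpotent in $\mathbb{K}$ (indeed $(q-1)^d\equiv 0$), the identity yields $q^{p^t}-1=(q-1)^{p^t}$, which vanishes in $\mathbb{K}$ precisely when $(q-1)^d\mid (q-1)^{p^t}$, i.e.\ when $p^t\ge d$. Because $(q-1)^j\not\equiv 0\pmod{\mathfrak{M}}$ for every $j<d$, the order of $q$ is exactly $p^{t_0}$ with $t_0=\min\{t\ge 0: p^t\ge d\}=\lceil\log_p d\rceil$, and in particular a power of $p$. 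The delicate point — which I view as the one thing to get right rather than a genuine obstacle — is this exponent: $t_0>0$ holds iff $d\ge 2$, whereas $d=1$ (so $\mathfrak{M}=a(q-1)$, $\mathbb{K}\cong\mathbb{F}_p$ and $q\equiv 1$) yields period $1=p^0$. I would therefore either read the right-hand condition as $d\ge 2$ in the positive-period case or record the $d=1$ boundary explicitly, since the equivalence is otherwise exact.
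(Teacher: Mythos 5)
Your proof is correct and follows essentially the same route as the paper's: both directions rest on the Frobenius identity $q^{p^t}-1=(q-1)^{p^t}$ over $\mathbb{F}_p$, with unique factorization in $\mathbb{F}_p[q]$ forcing $\mathfrak{M}=a(q-1)^d$ in one direction, and the divisibility $\mathfrak{M}\mid(q-1)^{p^n}$ for $p^n\ge d$ giving $q^{p^n}\equiv1\pmod{\mathfrak{M}}$ in the other (the paper does not route the index condition through \cref{prop:ind(q)} as you do, instead reading $\ind(q)=0$ directly off $q^{p^n}\equiv1$, but this difference is cosmetic). Your flagged boundary case is a genuine observation rather than a defect of your argument: for $d=1$ one has $\mathbb{K}\cong\mathbb{F}_p$ and $q\equiv1$, so $\per(q)=1=p^0$ and the stated equivalence with ``$t>0$'' strictly fails there; notably, the paper's own proof of the forward implication only concludes that $\per(q)$ \emph{divides} $p^n$, and its application in \cref{cor:lvrm} silently relaxes the conclusion to ``$\per(q)=p^t$ for some $t\ge0$'', which confirms your reading that $t\ge0$ (or an explicit carve-out of $d=1$) is the correct formulation. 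Your sharper computation of the exact period, $\per(q)=p^{t_0}$ with $t_0=\min\{t\ge0 : p^t\ge d\}$, goes beyond what the paper establishes and is correct, since $q^{p^s}=1+(q-1)^{p^s}$ in $\mathbb{K}$ vanishes minus one exactly when $p^s\ge d$.
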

\begin{proof}
First assume that $\mathfrak{M}(q)=a(q-1)^d$ for some positive integer $d$ and $a \in \mathbb{F}_p^\star$. Take $n>0$ such that $p^n\ge d$. Since the base field is $\mathbb{F}_p$, we have
\[
q^{p^n}-1=(q-1)^{p^n}=a^{-1}(q-1)^{p^n-d}a(q-1)^d=a^{-1}(q-1)^{p^n-d}\mathfrak{M}(q),
\]
so that $q^{p^n}\equiv 1 \pmod{\mathfrak{M}}$. Therefore $\ind(q)=0$ and $\per(q)$ divides $p^n$. 

Now assume that $\per(q)=p^t$ for some $t>0$ and $\ind(q) = 0$. There thus exists a polynomial $\mathfrak{A}$ such that
\[
\mathfrak{A}(q)\mathfrak{M}(q)=q^{p^t}-1=(q-1)^{p^t}.
\]
Since polynomial rings over fields are unique factorization domains, we conclude that $\mathfrak{M}(q)$ has the expected form.
\end{proof}

\subsection{On \texorpdfstring{$q$}{q}-binomials and \texorpdfstring{$p$}{p}-group languages}

 We now define an equivalence relation on $A^*$ as follows. 
If the word $w\in A^*$ can be written $pfs$ with $p,f,s\in A^*$, then $f$ is a {\em factor} of $w$. The set of factors of $w$ is denoted by $\Fac(w)$.
 \begin{definition}[$(u,\mathfrak{M})$-binomial equivalence]
   Let $u\in A^+$ and $\mathfrak{M}\in\mathbb{F}_p[q]$. Two finite words $w_1,w_2\in A^*$ are {\em $(u,\mathfrak{M})$-binomially equivalent} and we write $w_1\sim_{u,\mathfrak{M}} w_2$ whenever 
    \[ \forall v\in \Fac(u)\, :\, \qbin{w_1}{v}\equiv\qbin{w_2}{v}\pmod{\mathfrak{M}}.\]
  \end{definition}
      We stress that one has to consider all factors of $u$ and compute the corresponding $q$-binomial coefficients reduced modulo~$\mathfrak{M}$. There is no need to consider the trivial empty factor. Incidentally, the number of equivalence classes is bounded by $\#\mathbb{K}^{(\#\Fac(u)-1)}$. In particular, the equivalence relation
      $\sim_{u,\mathfrak{M}}$ has finite index in $A^*$.

\begin{remark}
  It is obvious that if $f$ is a factor of $u$, then $w_1\sim_{u,\mathfrak{M}} w_2$ implies $w_1\sim_{f,\mathfrak{M}} w_2$ because $\Fac(f)\subset\Fac(u)$. 
\end{remark}
  
\begin{example}\label{exa:equiv}
  Let $\mathfrak{M}(q)=q^2+1\in\mathbb{F}_2[q]$ and $u=01\in\{0,1\}^*$. Hence $\mathbb{K}=\{0,1,q,q+1\}$ and $\Fac(u)=\{\varepsilon,0,1,01\}$. By straightforward computation, we find $4^3=64$ equivalence classes---the maximal possible---with representatives of length at most $8$. For instance, the word $w=00011101$ is the shortest one such that
  \[\qbin{w}{0}\equiv q+1,\quad \qbin{w}{1}\equiv q+1,\quad \qbin{w}{01}\equiv q \pmod{q^2+1}.\]
\end{example}

  \begin{remark}\label{rem:restrict_def}
    The $(u,\mathfrak{M})$-binomial equivalence relation is not always a congruence on $A^*$. Take $p=2$, $\mathfrak{M}(q)=q^2+1$, $A=\{0,1\}$ and $u=0$. The simplest counterexample is $0\sim_{0,\mathfrak{M}}0$, $1\sim_{0,\mathfrak{M}}\varepsilon$ but $01\not\sim_{0,\mathfrak{M}}0$ because $\qbin{01}{0}\equiv q\not\equiv 1\equiv\qbin{0}{0}\pmod{\mathfrak{M}}$. 
  \end{remark}

In light of \cref{def:recognizable}, we seek for a congruence $\equiv$ on $A^*$ (of finite index) refining the equivalence relation~$\sim_{u,\mathfrak{M}}$
so that $A^*/{\equiv}$ is a finite monoid.

The following lemma gives some necessary conditions for a congruence to refine the $(u,\mathfrak{M})$-binomial equivalence.

\begin{lemma}\label{lem:coarsest}
  Let $u\in A^+$, $\mathfrak{M}\in\mathbb{F}_p[q]$ and $\cong$ be any congruence refining the $(u,\mathfrak{M})$-binomial equivalence~$\sim_{u,\mathfrak{M}}$. Let $w_1,w_2$ be words in $A^*$ such that $w_1 \cong w_2$. Then 
  \begin{itemize}
  \item $q^{|w_1|}\equiv q^{|w_2|}\pmod{\mathfrak{M}}$, and 
  \item $|w_1|=|w_2|$ or $\val\left(\qbin{w_i}{v}\right)+|w_i|-|v|\ge\val(\mathfrak{M})$ for all $i\in\{1,2\}$, $v\in A^*$ such that $av\in\Fac(u)$ for some $a\in A$. 
  \end{itemize}
\end{lemma}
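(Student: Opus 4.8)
The plan is to exploit the fact that a congruence refining $\sim_{u,\mathfrak{M}}$ must, in particular, respect the relation when we concatenate a common word on either side, and then to feed carefully chosen concatenations into the identity of \cref{rem:xy-u}. Throughout, $u$ is fixed with $u \in A^+$, so $\Fac(u)$ contains some nonempty factor; this lets us access $q$-binomial data.

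\textbf{First bullet.} Suppose $w_1 \cong w_2$. Pick any letter $a \in A$ appearing in $u$ (so $a \in \Fac(u)$). Since $\cong$ is a congruence, $w_1 a \cong w_2 a$, and therefore $w_1 a \sim_{u,\mathfrak{M}} w_2 a$, which gives in particular $\qbin{w_1 a}{a} \equiv \qbin{w_2 a}{a} \pmod{\mathfrak{M}}$. Now apply \cref{rem:xy-u} with $x = w_i$, $y = a$, $u = a$: the factorizations of the length-one word $a$ are $a = \varepsilon \cdot a$ and $a = a \cdot \varepsilon$, yielding
\[
\qbin{w_i a}{a} = q^{0}\,\qbin{w_i}{\varepsilon}\qbin{a}{a} + q^{|a|\cdot 0}\,\qbin{w_i}{a}\qbin{a}{\varepsilon} = q^{|w_i|} + \qbin{w_i}{a},
\]
using \cref{thm:powers} (or \cref{cor:xy-u}) to see the prefactor $q^{|a|(|y|-|a|)} = q^{0}$ attached to the $\qbin{w_i}{\varepsilon}\qbin{a}{a}$ term equals $q^{|w_i|}$ after tracking the exponent; the cleanest route is \cref{cor:xy-u} directly, where the $j=1$ term is $q^{0}\qbin{w_i}{a}\qbin{a}{a}=\qbin{w_i}{a}$ and the $j=0$ term is $q^{|a|}\qbin{w_i}{\varepsilon}\qbin{a}{a}=q^{|w_i|}$ once the length bookkeeping is done. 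Since $w_1 \cong w_2$ already gives $\qbin{w_1}{a}\equiv \qbin{w_2}{a}\pmod{\mathfrak{M}}$ (as $a \in \Fac(u)$), subtracting yields $q^{|w_1|}\equiv q^{|w_2|}\pmod{\mathfrak{M}}$, which is the first bullet.

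\textbf{Second bullet.} Assume $|w_1|\neq |w_2|$; I must show the valuation inequality for each $i$ and each relevant $v$. Take $v$ with $av\in\Fac(u)$ for some $a\in A$, so both $v$ and $av$ lie in $\Fac(u)$. The key is to compare $\qbin{w_i a}{av}$ computed two ways. On one hand $w_1 a \cong w_2 a$ forces equality modulo $\mathfrak{M}$ of $\qbin{w_1 a}{av}$ and $\qbin{w_2 a}{av}$. On the other hand, \cref{rem:xy-u} applied to $x=w_i$, $y=a$, target word $av$ splits the target as $av = (av)\varepsilon$ and $av = u_1 u_2$ with $u_2$ a nonempty suffix; since $y=a$ has length one, the only factorizations contributing are $u_1 = av, u_2 = \varepsilon$ and $u_1 = v, u_2 = a$ (the latter only if $a$ is the relevant letter, contributing $\qbin{a}{a}=1$). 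This produces a relation of the shape
\[
\qbin{w_i a}{av} = q^{|av|}\,\qbin{w_i}{av} + q^{|v|}\,\delta_{\ldots}\,\qbin{w_i}{v},
\]
where the exponent $|v|$ on the second term comes from $|u_1|(|y|-|u_2|)=|v|(1-1)$ plus the intrinsic contribution; the precise exponent is what I must pin down. Combining the congruence $\qbin{w_1 a}{av}\equiv\qbin{w_2 a}{av}$ with the already-known congruences $\qbin{w_1}{av}\equiv\qbin{w_2}{av}$ and $\qbin{w_1}{v}\equiv\qbin{w_2}{v}$, all the ``matched'' terms cancel and I am left with a relation forcing $q^{|v|}\qbin{w_1}{v}\equiv q^{\text{shift}}$ terms to agree, from which—using $q^{|w_1|}\equiv q^{|w_2|}$ from the first bullet together with \cref{fac:trivial}—the valuation bound $\val(\qbin{w_i}{v})+|w_i|-|v|\ge \val(\mathfrak{M})=\ind(q)$ (via \cref{prop:ind(q)}) must hold, since otherwise the two sides would differ in a low-degree monomial that $\mathfrak{M}$ cannot reconcile when the lengths differ.

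\textbf{Main obstacle.} The delicate part is the exact exponent bookkeeping in the two-term expansion and extracting the valuation inequality cleanly. The mechanism is that $q^{|w_1|}\equiv q^{|w_2|}\pmod{\mathfrak{M}}$ with $|w_1|\neq|w_2|$ forces, by \cref{fac:trivial} and \cref{prop:ind(q)}, that both exponents are at least $\ind(q)=\val(\mathfrak{M})$; the ``length-shift'' in the $q$-Vandermonde term is exactly $|w_i|-|v|$, so the monomial carrying $\qbin{w_i}{v}$ sits starting at degree $\val(\qbin{w_i}{v})+(|w_i|-|v|)$, and for the congruence to survive the length mismatch this must be pushed past $\val(\mathfrak{M})$ so the discrepancy is absorbed modulo $\mathfrak{M}$. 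I would carry out the exponent computation via \cref{cor:xy-u} rather than \cref{rem:xy-u} to avoid sign/offset errors, and invoke \cref{fac:trivial} as the final quantitative step.
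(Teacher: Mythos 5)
Your overall skeleton---use the congruence to extend both $w_1$ and $w_2$ by a letter $a$ with $av\in\Fac(u)$, expand the resulting $q$-binomial via a two-term $q$-Vandermonde identity, cancel the terms already matched by $\sim_{u,\mathfrak{M}}$, and finish with a valuation argument---is exactly the paper's. But the computations at its core are wrong, in a way that kills the argument: you extend on the \emph{right}, whereas the deformation \cref{eq:recdef} processes letters from the right, so right-extension produces exponents that never see $|w_i|$. Concretely, \cref{cor:xy-u} with $x=w_i$, $y=a$, $k=1$ gives the $j=0$ term $q^{0}\qbin{w_i}{\varepsilon}\qbin{a}{a}=1$ and the $j=1$ term $q^{1\cdot(|a|-1+1)}\qbin{w_i}{a}\qbin{a}{\varepsilon}=q\,\qbin{w_i}{a}$, so that $\qbin{w_ia}{a}=1+q\,\qbin{w_i}{a}$, \emph{not} $q^{|w_i|}+\qbin{w_i}{a}$; subtracting the two available congruences then yields the tautology $1\equiv 1$ and no constraint whatsoever on the lengths. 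Likewise, for the second item, $\qbin{w_ia}{av}=q^{|av|}\qbin{w_i}{av}+\delta_{a,c}\,\qbin{w_i}{av'}$, where $c$ is the last letter of $av$ and $av=av'c$ (your proposed factorization ``$u_1=v$, $u_2=a$'' of $av$ is not a factorization of $av$ unless $av=va$): again no occurrence of $|w_i|$ in any exponent, and the surviving term involves $\qbin{w_i}{av'}$, not the polynomial $\qbin{w_i}{v}$ you need.

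The repair is to prepend, which is what the paper does: since $\cong$ is a congruence, $aw_1\cong aw_2$, and \cref{cor:xy-u} with $x=a$, $y=w_i$ gives
\[
\qbin{aw_i}{av}=\qbin{w_i}{av}+q^{|w_i|-|v|}\qbin{w_i}{v},
\]
so cancelling $\qbin{w_1}{av}\equiv\qbin{w_2}{av}\pmod{\mathfrak{M}}$ leaves $q^{|w_1|-|v|}\qbin{w_1}{v}\equiv q^{|w_2|-|v|}\qbin{w_2}{v}\pmod{\mathfrak{M}}$; the case $v=\varepsilon$ (legitimate since $u\neq\varepsilon$) is the first item. For the second item, your quantitative step is also insufficient: applying \cref{fac:trivial} to $q^{|w_1|}\equiv q^{|w_2|}$ with $|w_1|\neq|w_2|$ only yields the lower bound $|w_i|\ge\ind(q)=\val(\mathfrak{M})$, which does not imply $\val\left(\qbin{w_i}{v}\right)+|w_i|-|v|\ge\val(\mathfrak{M})$ (take $v$ a long suffix of $w_i$: then $\val\left(\qbin{w_i}{v}\right)=0$ by \cref{cor:degg} while $|w_i|-|v|$ can be small). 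The paper's actual mechanism is a Euclidean-division observation: reduction modulo $\mathfrak{M}$ preserves the valuation of any polynomial whose valuation is strictly below $\val(\mathfrak{M})$. Hence, if $\val\left(\qbin{w_1}{v}\right)+|w_1|-|v|<\val(\mathfrak{M})$, the displayed congruence forces $\val\left(\qbin{w_1}{v}\right)+|w_1|-|v|=\val\left(\qbin{w_2}{v}\right)+|w_2|-|v|$ with both sides finite; finiteness gives $\qbin{w_i}{v}\neq 0$, hence $|w_i|\ge|v|$ and $\val\left(\qbin{w_i}{v}\right)<\val(\mathfrak{M})$, and then $\qbin{w_1}{v}\equiv\qbin{w_2}{v}\pmod{\mathfrak{M}}$ (valid because $v\in\Fac(u)$) gives $\val\left(\qbin{w_1}{v}\right)=\val\left(\qbin{w_2}{v}\right)$, whence $|w_1|=|w_2|$. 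So both your key identity and your closing valuation comparison must be replaced as above before the proof stands.
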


\begin{proof}
  Let $v\in A^*$ such that $av\in\Fac(u)$ for some $a\in A$. By \cref{thm:powers}, for $i\in\{1,2\}$, we have
  \[
    \qbin{aw_i}{av}=\qbin{w_i}{av}+q^{|w_i|-|v|}\qbin{w_i}{v}.
  \]
  Since $\cong$  is a refinement of $\sim_{u,\mathfrak{M}}$, $\qbin{w_1}{av}\equiv\qbin{w_2}{av}\pmod{\mathfrak{M}}$. It is also congruence, so $aw_1 \cong aw_2$ and $\qbin{aw_1}{av}\equiv\qbin{aw_2}{av}\pmod{\mathfrak{M}}$. It follows that
  \begin{equation}
    \label{eq:jep1}
q^{|w_1|-|v|}\qbin{w_1}{v}\equiv q^{|w_2|-|v|}\qbin{w_2}{v}\pmod{\mathfrak{M}}    
  \end{equation}
 In particular, $v=\varepsilon$ gives that $q^{|w_1|}\equiv q^{|w_2|}\pmod{\mathfrak{M}}$ since $u\neq\varepsilon$.

    Note that for each $\mathfrak{P}\in\mathbb{F}_p[q]$ we have
    \begin{equation}\label{eq:jep3}
      \mathfrak{P}\equiv\sum_{i=0}^{\val(\mathfrak{M})-1} ([q^i]\mathfrak{P})q^i+\sum_{i=\val(\mathfrak{M})}^{\deg(\mathfrak{M})-1} c_iq^i\pmod{\mathfrak{M}}
    \end{equation}
    for some $c_i\in\mathbb{F}_p$ because by Euclidean division $\mathfrak{P}=\mathfrak{A}.\mathfrak{M}+(\mathfrak{P}\bmod{\mathfrak{M}})$ and $\val(\mathfrak{A}\mathfrak{M})\ge \val(\mathfrak{M})$. In particular, we have
    \begin{eqnarray}
      \val(\mathfrak{P}\bmod{\mathfrak{M}})=\val(\mathfrak{P}),&  \text{ if }0\le \val(\mathfrak{P})<\val(\mathfrak{M})\text{ and } \label{eq:jep2} \\
                                                                   \val(\mathfrak{P}\bmod{\mathfrak{M}})\ge \val(\mathfrak{M}),&\text{ if }\val(\mathfrak{P})\ge\val(\mathfrak{M}). \nonumber 
    \end{eqnarray}
    Assume that
    \[
      \val\left(\qbin{w_1}{v}\right)+|w_1|-|v|<\val(\mathfrak{M}).
    \]
    Plugging the polynomials from \cref{eq:jep1} into \cref{eq:jep3}, we get $\val\left(\qbin{w_1}{v}\right)+|w_1|-|v|=\val\left(\qbin{w_2}{v}\right)+|w_2|-|v|$. The assumption implies that $\val\left(\qbin{w_i}{v}\right)$ is finite, i.e., $\qbin{w_i}{v}\neq 0$ and thus $|w_i|\ge|v|$. Hence $\val\left(\qbin{w_i}{v}\right)<\val(\mathfrak{M})$. It follows from \eqref{eq:jep2} that $\val\left(\qbin{w_i}{v}\right)=\val\left(\qbin{w_i}{v}\bmod{\mathfrak{M}}\right)$. Since $\qbin{w_1}{v}\equiv \qbin{w_2}{v} \pmod{\mathfrak{M}}$, we therefore conclude that $|w_1|=|w_2|$. 
\end{proof}

\begin{proposition}\label{pro:q_not_unit}
    Let $u\in A^+$, $\mathfrak{M}\in\mathbb{F}_p[q]$ and $\cong$ be a congruence that refines $\sim_{u,\mathfrak{M}}$. If $q$ is not a unit of~$\mathbb{K}$, then the monoid $A^*/{\cong}$ is a not group. In particular, no element except for the identity is invertible.
\end{proposition}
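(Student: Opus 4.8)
The plan is to deduce both assertions from the length invariant recorded in the first bullet of \cref{lem:coarsest}, namely that $w_1\cong w_2$ forces $q^{|w_1|}\equiv q^{|w_2|}\pmod{\mathfrak{M}}$. The only ring-theoretic input I need is the elementary fact that if $q^n\equiv 1\pmod{\mathfrak{M}}$ for some $n\geq 1$, then $q$ is a unit of $\mathbb{K}$, with inverse the class of $q^{n-1}$; equivalently, the hypothesis that $q$ is not a unit says precisely that $q^n\not\equiv 1\pmod{\mathfrak{M}}$ for every $n\geq 1$. (This is consistent with the earlier remark that $\ind(q)=0$ exactly when $q$ is a unit, but I do not even need \cref{prop:ind(q)} for this direction.)

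First I would establish the ``in particular'' clause. Write $[w]$ for the $\cong$-class of a word $w$, and suppose $[w]$ is invertible in $A^*/{\cong}$ with inverse $[w']$, so that $ww'\cong\varepsilon$. Applying \cref{lem:coarsest} to this pair yields $q^{|ww'|}\equiv q^{0}=1\pmod{\mathfrak{M}}$. Since $q$ is not a unit, the observation above forces $|w|+|w'|=|ww'|=0$, hence $w=w'=\varepsilon$ and $[w]=[\varepsilon]$ is the identity. Thus no class other than the identity is invertible.

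It then remains only to verify that $A^*/{\cong}$ is not the trivial one-element monoid, since a one-element monoid \emph{is} a group. As $u\in A^+$, it has a first letter $a\in\Fac(u)$, and the defining recursion \cref{eq:recdef} gives $\qbin{a}{a}=1$ and $\qbin{\varepsilon}{a}=0$, which are distinct modulo $\mathfrak{M}$ because $\deg(\mathfrak{M})\geq 1$. Hence $a\not\sim_{u,\mathfrak{M}}\varepsilon$, and since $\cong$ refines $\sim_{u,\mathfrak{M}}$ we get $a\not\cong\varepsilon$; so $[a]\neq[\varepsilon]$ is a non-invertible element and $A^*/{\cong}$ is not a group. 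The argument is short, and the only step deserving care is this last one: one must not overlook the degenerate possibility of a trivial quotient, which is exactly where the hypothesis $u\in A^+$ is used, providing a factor that separates some word from $\varepsilon$.
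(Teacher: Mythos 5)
Your proof is correct and follows essentially the same route as the paper's: both rest on the first bullet of \cref{lem:coarsest}, which for $w\cong\varepsilon$ gives $q^{|w|}\equiv q^0\pmod{\mathfrak{M}}$, combined with the observation that $q$ is a unit of $\mathbb{K}$ exactly when $q^n\equiv 1\pmod{\mathfrak{M}}$ for some $n\ge 1$ (the paper phrases this as $\ind(q)=0$), so that $[\varepsilon]=\{\varepsilon\}$ and no class of a non-empty word can be invertible. Your explicit verification that the quotient is nontrivial (via $\qbin{a}{a}=1\not\equiv 0=\qbin{\varepsilon}{a}$ for the first letter $a$ of $u$, using $\deg\mathfrak{M}\ge 1$) is a welcome bit of care that the paper leaves implicit, since the singleton property of $[\varepsilon]$ already separates $[a]$ from $[\varepsilon]$.
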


\begin{proof}
  The class $[\varepsilon]$ for $\cong$ is clearly the identity element of the monoid. 
 Thanks to \cref{lem:coarsest}, if $\ind(q)\ge 1$, then $[\varepsilon]$ is a singleton. For any non-empty word $w$ and for all words $x$, the class $[w]\cdot [x]=[wx]$ differs from $[\varepsilon]$, thus $[w]$ is not invertible.
\end{proof}

Nevertheless, it is interesting to look for the coarsest congruence refining $\sim_{u,\mathfrak{M}}$. The following result, along with \cref{lem:coarsest}, enables us to state the conditions defining this congruence.

\begin{proposition}\label{prop:coarsest.cong}
Let $u\in A^+$ and $\mathfrak{M}\in\mathbb{F}_p[q]$. Define the relation $\equiv_{u,\mathfrak{M}}$ by
$w_1 \equiv_{u,\mathfrak{M}} w_2$ if 
 \begin{itemize}
 \item $w_1\sim_{u,\mathfrak{M}}w_2$, and
  \item $q^{|w_1|}\equiv q^{|w_2|}\pmod{\mathfrak{M}}$, and 
  \item $|w_1|=|w_2|$ or $\val\left(\qbin{w_i}{v}\right)+|w_i|-|v|\ge\val(\mathfrak{M})$ for all $i\in\{1,2\}$, $v\in A^*$ such that $av\in\Fac(u)$ for some $a\in A$. 
  \end{itemize}

Then $\equiv_{u,\mathfrak{M}}$ is a congruence. Moreover, it is the coarsest one refining $\sim_{u,\mathfrak{M}}$.
\end{proposition}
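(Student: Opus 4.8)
The plan is to prove two things: that $\equiv_{u,\mathfrak{M}}$ is a congruence, and that it is the coarsest congruence refining $\sim_{u,\mathfrak{M}}$. The second part is almost immediate: \cref{lem:coarsest} shows that \emph{any} congruence $\cong$ refining $\sim_{u,\mathfrak{M}}$ forces all three defining conditions of $\equiv_{u,\mathfrak{M}}$ to hold whenever $w_1\cong w_2$. Thus $w_1\cong w_2$ implies $w_1\equiv_{u,\mathfrak{M}}w_2$, i.e. $\cong$ refines $\equiv_{u,\mathfrak{M}}$. Combined with the (yet to be established) fact that $\equiv_{u,\mathfrak{M}}$ is itself a congruence refining $\sim_{u,\mathfrak{M}}$ (the first condition is exactly $\sim_{u,\mathfrak{M}}$), this proves it is the coarsest such congruence. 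So the real content is verifying that $\equiv_{u,\mathfrak{M}}$ is a congruence.

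To show $\equiv_{u,\mathfrak{M}}$ is an equivalence relation is routine (reflexivity and symmetry are clear; transitivity of the third clause requires a small argument but follows from the structure of the conditions). The essential step is \textbf{compatibility with concatenation}: assuming $w_1\equiv_{u,\mathfrak{M}}w_2$, I must show $xw_1y\equiv_{u,\mathfrak{M}}xw_2y$ for all $x,y\in A^*$, and by the usual reduction it suffices to handle left and right multiplication by a single letter $a$. The second and third defining conditions are easy to propagate: appending or prepending a letter changes all lengths uniformly, so $q^{|w_1|}\equiv q^{|w_2|}$ immediately gives $q^{|aw_1|}\equiv q^{|aw_2|}$ and likewise on the right, and the length/valuation clause behaves predictably under the length shift. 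The crux is the first condition, namely that $\sim_{u,\mathfrak{M}}$ is preserved, i.e. $\qbin{aw_1}{v}\equiv\qbin{aw_2}{v}$ and $\qbin{w_1a}{v}\equiv\qbin{w_2a}{v}\pmod{\mathfrak{M}}$ for every factor $v$ of $u$.

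The plan for this crux is to expand the $q$-binomial of the extended word using \cref{thm:powers} (equivalently the Pascal-type recurrences and \cref{cor:xy-u}/\cref{rem:xy-u}). For right multiplication, $\qbin{w_ia}{v}=\qbin{w_i}{v}q^{|v|}+\delta_{a,b}\qbin{w_i}{v'}$ when $v=v'b$ ends in the letter $b$; here both $v$ and $v'$ are factors of $u$, so the hypothesis $w_1\sim_{u,\mathfrak{M}}w_2$ already gives congruence of both summands, and we are done for this direction \emph{using only} the first condition. For left multiplication, $\qbin{aw_i}{v}=\qbin{w_i}{v}+q^{|w_i|-|v|}\qbin{w_i}{v'}$ when $v=bv'$ begins with $b=a$ (and $=\qbin{w_i}{v}$ otherwise). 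The term $\qbin{w_i}{v}$ is controlled by $\sim_{u,\mathfrak{M}}$, but the second term carries the factor $q^{|w_i|-|v|}$, whose exponent depends on $|w_i|$. \textbf{This is the main obstacle}: when $|w_1|\ne|w_2|$ the prefactors $q^{|w_1|-|v|}$ and $q^{|w_2|-|v|}$ need not be congruent unless the index/period structure of $q$ in $\mathbb{K}$ intervenes. This is exactly where the second and third defining conditions earn their place. Using \cref{fac:trivial} together with \cref{prop:ind(q)}, when $|w_1|\ne|w_2|$ the second condition gives $q^{|w_1|}\equiv q^{|w_2|}$ with both exponents at least $\ind(q)=\val(\mathfrak{M})$, and the third condition guarantees that either the valuations are large enough that $q^{|w_i|-|v|}\qbin{w_i}{v}$ vanishes modulo $\mathfrak{M}$ (when the total valuation reaches $\val(\mathfrak{M})$), or the relevant congruence of prefactors holds on the nose. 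I would case-split on whether $|w_1|=|w_2|$: in the equal-length case the prefactors are literally identical and the argument collapses to $\sim_{u,\mathfrak{M}}$; in the unequal-length case I invoke the valuation estimate \eqref{eq:jep2}–\eqref{eq:jep3} from the proof of \cref{lem:coarsest}, which shows the two products agree modulo $\mathfrak{M}$ once the valuation threshold is met. Carefully noting that $v'=a^{-1}v\in\Fac(u)$ whenever $v\in\Fac(u)$ begins with $a$ (so that the third condition indeed applies to the right subword $v'$) completes the propagation and hence the congruence property.
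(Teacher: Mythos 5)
Your proposal is correct, and its engine is the same as the paper's: \cref{fac:trivial} combined with $\ind(q)=\val(\mathfrak{M})$ from \cref{prop:ind(q)}, plus the remainder--valuation estimates \eqref{eq:jep2}--\eqref{eq:jep3}, applied to the single length-sensitive cross term; the coarsest-congruence part via \cref{lem:coarsest} is identical in both. Where you genuinely differ is the decomposition. The paper proves compatibility for arbitrary products in one stroke: it expands $\qbin{y_1w_1}{v}$ by the $q$-Vandermonde identity (\cref{rem:xy-u}) into the two boundary terms $\qbin{w_1}{v}$ and $q^{|v||w_1|}\qbin{y_1}{v}$ plus a sum over factorizations $v=v_1v_2$ with $v_1,v_2\in A^+$, and then normalizes each cross term by writing the remainder $R$ of $\qbin{w_1}{v_2}$ modulo $\mathfrak{M}$ as $q^{l_{w_1,v_2}}\cdot(R/q^{l_{w_1,v_2}})$, so that the full power of $q$ has exponent at least $\ind(q)$ and may be shifted by multiples of $\per(q)$. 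Your letter-by-letter reduction collapses that sum to a single cross term, which is cleaner, but it costs you the obligation to propagate \emph{all three} defining conditions through single-letter extensions --- a check you assert (``behaves predictably'') but do not perform. It does go through: e.g.\ for $aw_i$ and $v$ with $bv\in\Fac(u)$, one has $\val\left(\qbin{aw_i}{v}\right)\ge\min\left(\val\qbin{w_i}{v},\,|w_i|-|v'|+\val\qbin{w_i}{v'}\right)$ with $v=av'$, and the hypothesis applied to $v$ (witness $b$) and to $v'$ (witness $a$) gives the required bound with room to spare; a similar routine check handles $w_ia$ and the transitivity of the third clause. Note, in fairness, that the paper's proof carries the symmetric gloss: it only verifies the $\sim_{u,\mathfrak{M}}$ part for the products $y_1w_1$, $y_2w_2$ and leaves the second and third conditions implicit, exactly the easy checks you flag.

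Two minor slips, neither fatal. First, an off-by-one: with $v=bv'$ and $b=a$, the left recurrence is $\qbin{aw_i}{v}=\qbin{w_i}{v}+q^{|w_i|-|v'|}\qbin{w_i}{v'}$, i.e.\ exponent $|w_i|-|v|+1$, not $|w_i|-|v|$ (you transplanted the formula from \cref{lem:coarsest}, where the letter ``$v$'' denotes the tail); this is harmless since your argument never uses the exact shift. Second, your dichotomy ``either the term vanishes modulo $\mathfrak{M}$ or the congruence of prefactors holds on the nose'' is not quite the right statement: take $\val(\mathfrak{M})=2$, $|w_i|-|v'|=1$ and $l:=\val\left(\qbin{w_i}{v'}\bmod\mathfrak{M}\right)=1$; then neither branch holds, yet the products are congruent, precisely because one must first extract $q^{l}$ from the remainder and only then invoke \cref{fac:trivial} on the combined exponent $|w_i|-|v'|+l\ge\ind(q)$. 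Your subsequent appeal to \eqref{eq:jep2}--\eqref{eq:jep3} is the correct mechanism, so this is a phrasing inaccuracy rather than a gap in the plan.
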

\begin{proof} 
Let $w_1,w_2,y_1,y_2$ be words such that $w_1\equiv_{u,\mathfrak{M}}w_2$ and $y_1\equiv_{u,\mathfrak{M}}y_2$, and let $v$ be a factor of~$u$. From \cref{rem:xy-u},
\begin{align*}
\qbin{y_1w_1}{v} & =\sum_{\substack{v=v_1v_2\\ v_1,v_2 \in A^*}} q^{ |v_1| (|w_1|-|v_2|)} \qbin{y_1}{v_1}\qbin{w_1}{v_2}\\
& = \qbin{w_1}{v} + q^{|v||w_1|}\qbin{y_1}{v} + \sum_{\substack{v=v_1v_2\\ v_1,v_2 \in A^+}} q^{ |v_1| (|w_1|-|v_2|)} \qbin{y_1}{v_1}\qbin{w_1}{v_2}.
\end{align*}
Since $v_1$ and $v_2$ are factors of $v\in\Fac(u)$, 
\begin{equation}\label{eq:jep4}
\qbin{y_1}{v_1}\equiv \qbin{y_2}{v_1},\quad  \qbin{w_1}{v_2}\equiv \qbin{w_2}{v_2} \pmod{\mathfrak{M}}.
\end{equation}
Moreover, we have
\[
q^{|v_1||w_1|}\equiv q^{|v_1||w_2|}\pmod{\mathfrak{M}}.
\]
For the sake of notation, let us define, for all words $v,w$,
\[
  l_{w,v}=\val\left(\qbin{w}{v}\bmod{\mathfrak{M}}\right).
\]
Let $R$ be the remainder of the Euclidean division of $\qbin{w_1}{v_2}$ by $\mathfrak{M}$, so that $R/q^{l_{w_1,v_2}}$ is a polynomial.
  Finally, we have
  \begin{multline*}
\sum_{\substack{v=v_1v_2\\ v_1,v_2 \in A^+}} \!\! q^{ |v_1| (|w_1|-|v_2|)} \qbin{y_1}{v_1}\qbin{w_1}{v_2}\\
\equiv \sum_{\substack{v=v_1v_2\\ v_1,v_2 \in A^+}} \!\! q^{(|v_1|-1)(|w_1|-|v_2|)+l_{w_1,v_2}+|w_1|-|v_2|} \qbin{y_1}{v_1}\frac{R}{q^{l_{w_1,v_2}}} \pmod{\mathfrak{M}}.
\end{multline*}
Using \eqref{eq:jep2} note that $\val\left(\qbin{w}{v}\right)+|w|-|v|\ge\val(\mathfrak{M})$ implies $\val\left(\qbin{w}{v}\bmod{\mathfrak{M}}\right)+|w|-|v|\ge\val(\mathfrak{M})$. Hence $l_{w_1,v_2}+|w_1|-|v_2|\geq\val(\mathfrak{M})=\ind(q)$, by assumption $|w_2|=|w_1|+d\per(q)$ for some $d$, we have
\begin{align*}
q^{(|v_1|-1)(|w_1|-|v_2|)+l_{w_1,v_2}+|w_1|-|v_2|} & \equiv q^{(|v_1|-1)(|w_1|-|v_2|)+l_{w_2,v_2}+|w_1|-|v_2|+d|v_1|\per(q)}\pmod{\mathfrak{M}}\\
& \equiv q^{|v_1|(|w_2|-|v_2|)+l_{w_2,v_2}}\pmod{\mathfrak{M}},
\end{align*}
with $l_{w_1,v_2}=l_{w_2,v_2}$ coming from \cref{eq:jep4}. In the end, we get
\[
\sum_{\substack{v=v_1v_2\\ v_1,v_2 \in A^+}} q^{ |v_1| (|w_1|-|v_2|)} \qbin{y_1}{v_1}\qbin{w_1}{v_2} \equiv \sum_{\substack{v=v_1v_2\\ v_1,v_2 \in A^+}} q^{ |v_1| (|w_2|-|v_2|)} \qbin{y_2}{v_1}\qbin{w_2}{v_2}\pmod{\mathfrak{M}},
\]
so that $\qbin{y_1w_1}{v}\equiv\qbin{y_2w_2}{v}$ for all factors $v$ of $u$, i.e., $y_1w_1\equiv_{u,\mathfrak{M}}y_2w_2$.

The fact that $\equiv_{u,\mathfrak{M}}$ is the coarsest congruence refining $\sim_{u,\mathfrak{M}}$ follows from
\cref{lem:coarsest}.
\end{proof}

\begin{remark}\label{rem:all_letters}
We provide two cases where the congruence defined in \cref{prop:coarsest.cong} has a simpler expression:

\textbf{Case 1.} The first case is when $q$ is a unit of $\mathbb{K}$. Here $\ind(q)=0=\val(\mathfrak{M})$. Consequently
\[
w_1 \equiv_{u,\mathfrak{M}} w_2 \quad \text{if and only if} \quad w_1\sim_{u,\mathfrak{M}}w_2 \text{ and } |w_1|\equiv |w_2| \pmod{\per(q)}.
\]

\textbf{Case 2.} The second case is when $u$ contains all letters of $A$. In this case, we show next, that this implies $q^{|w_1|}\equiv q^{|w_2|} \pmod{\mathfrak{M}}$. As a consequence,
in this case, we have
\begin{align*}
w_1 \equiv_{u,\mathfrak{M}} w_2 \quad \text{if and only if} &
\quad w_1\sim_{u,\mathfrak{M}}w_2 \text{ and }  |w_1|  = |w_2| \text{ or }\\
                                                            & \min_{\substack{i\in\{1,2\}, v\in A^*:\\ \exists a\in A, av\in\Fac(u)}}\left\{\val\left(\qbin{w_i}{v}\right)+|w_i|-|v|\right\}\ge\val(\mathfrak{M}) . 
\end{align*}
Since $u$ contains all letters of $A$, \cref{prop:sum_over_subwords} implies for $i=1,2$ 
\[
  \sum_{a\in A}\qbin{w_i}{a}=\qbin{|w_i|}{1}=\frac{q^{|w_i|}-1}{q-1}.
\]
Now $w_1\sim_{u,\mathfrak{M}}w_2$ implies that $\qbin{w_1}{a}\equiv \qbin{w_2}{a}\pmod{\mathfrak{M}}$ for all $a\in A$. Hence
\[\qbin{|w_1|}{1}\equiv\qbin{|w_2|}{1}\pmod{\mathfrak{M}}\]
thus $q^{|w_1|}-q^{|w_2|}=(q-1)\mathfrak{P}\mathfrak{M}$ for some polynomial $\mathfrak{P}$.

To conclude this remark, we combine the two above cases, namely, if both $q$ is invertible and $u$ contains all letters of $A$, we find that the congruence $\equiv_{u,\mathfrak{M}}$ is actually the equivalence relation $\sim_{u,\mathfrak{M}}$ itself (i.e., there is no proper refinement). 
\end{remark}

When $q$ is a unit of $\mathbb{K}$, for a congruence  $\cong$ that refines $\sim_{u,\mathfrak{M}}$, we may ask whether or not the monoid $A^*/{\cong}$ is a group. It is natural to consider the \emph{coarsest} congruence refining $\sim_{u,\mathfrak{M}}$, which is $\equiv_{u,\mathfrak{M}}$ in view of \cref{prop:coarsest.cong}. In particular, since we consider that $q$ is a unit of $\mathbb{K}$, \cref{rem:all_letters} shows that this congruence is actually the intersection of relations $\sim_{u,\mathfrak{M}}\cap \sim_{\per(q)}$ (or just $\sim_{u,\mathfrak{M}}$ if $u$ contains all letters of $A$) where $x \sim_{\per(q)} y$ if and only if $|x|\equiv |y| \pmod{\per(q)}$. Considering this particular congruence, the corresponding quotient is indeed a group as seen in \cref{thm:group_qinvertible}.

\begin{example}
  Continuing \cref{exa:equiv}, if we take for each equivalence the genealogically least representative and if the $64$ classes are ordered with respect to these representatives, we get the multiplication table depicted in~\cref{fig:mult_table}. The element in the upper left corner is $[\varepsilon]$.
\end{example}
\begin{figure}[h!tb]
  \centering
  \scalebox{.4}{\includegraphics{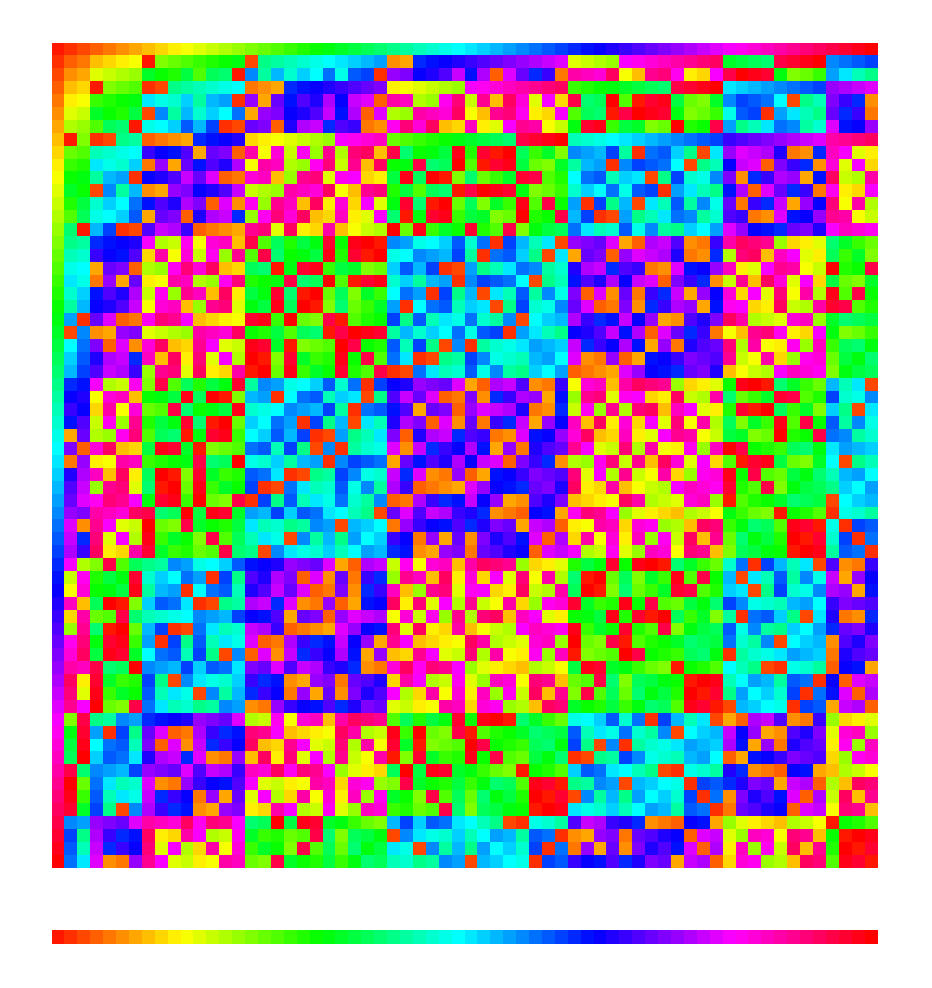}}
  \caption{The multiplication table for the group $\{0,1\}^*/{\sim_{01,q^2+1}}$ with $64$ elements.}
  \label{fig:mult_table}
\end{figure}

\begin{theorem}\label{thm:group_qinvertible}
 Let $u\in A^+$, $\mathfrak{M}\in\mathbb{F}_p[q]$ and the relation $\equiv_{u,\mathfrak{M}}$ defined in \cref{prop:coarsest.cong}. If $q$ is a unit in $\mathbb{K}$, then $A^*/{\equiv_{u,\mathfrak{M}}}$ is a group whose order divides $\per(q)\cdot p^{|u|}$.
\end{theorem}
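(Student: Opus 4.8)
The plan is to first use the hypothesis that $q$ is a unit to simplify the congruence, then to prove group-ness by embedding $A^*/{\equiv_{u,\mathfrak{M}}}$ into a finite matrix group, and finally to control the order through a length homomorphism together with a $p$-group filtration. Since $q$ is a unit, \cref{prop:ind(q)} gives $\ind(q)=\val(\mathfrak{M})=0$, so \textbf{Case 1} of \cref{rem:all_letters} applies: $w_1\equiv_{u,\mathfrak{M}}w_2$ holds exactly when $w_1\sim_{u,\mathfrak{M}}w_2$ and $|w_1|\equiv|w_2|\pmod{\per(q)}$. By \cref{prop:coarsest.cong} the relation $\equiv_{u,\mathfrak{M}}$ is a congruence of finite index, so $G:=A^*/{\equiv_{u,\mathfrak{M}}}$ is a finite monoid with identity $[\varepsilon]$; it remains to show it is a group and to bound $\#G$.

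For group-ness I would build a multiplicative representation. Index rows and columns by the (finite) set $\Fac(u)$, partially ordered by the prefix relation $\preceq$; crucially $\Fac(u)$ is closed under taking factors, so for $s\preceq t$ the word $s^{-1}t$ and every intermediate prefix lie again in $\Fac(u)$. For $w\in A^*$ define $M(w)_{s,t}=q^{|s|\,|w|-|s|\,|t|+|t|^2}\,\qbin{w}{s^{-1}t}\bmod\mathfrak{M}$ when $s\preceq t$, and $0$ otherwise. Expanding $M(xy)_{s,t}=q^{(\cdots)}\qbin{xy}{s^{-1}t}$ by \cref{rem:xy-u} and comparing with $\sum_r M(x)_{s,r}M(y)_{r,t}$, one checks that this particular power of $q$ is exactly what absorbs the correcting factor $q^{|v_1|(|w|-|v_2|)}$, so that $M\colon A^*\to GL_n(\mathbb{K})$ with $n=\#\Fac(u)$ is a monoid morphism (the diagonal entries are the units $q^{|s|\,|w|}$, hence $M(w)$ is invertible). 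The row $s=\varepsilon$ recovers each $\qbin{w}{v}\bmod\mathfrak{M}$ for $v\in\Fac(u)$, and a length‑one diagonal entry recovers $q^{|w|}$, hence $|w|\bmod\per(q)$ via \cref{fac:trivial}; therefore $M(w_1)=M(w_2)$ if and only if $w_1\equiv_{u,\mathfrak{M}}w_2$. Thus $G\cong M(A^*)$ is a finite submonoid of the finite group $GL_n(\mathbb{K})$, and a finite submonoid of a group is a subgroup (for each element $g$ one has $g^i=g^{i+k}$, whence $g^k=e$ and $g^{-1}=g^{k-1}$ lie in the submonoid). Hence $G$ is a group.

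To obtain the order I would split off the cyclic part. The map $\lambda\colon G\to\mathbb{Z}/\per(q)\mathbb{Z}$, $[w]\mapsto|w|\bmod\per(q)$, is a well-defined (by the simplified congruence) surjective homomorphism, so $\#G=\per(q)\cdot\#K$ with $K=\ker\lambda$. I would then filter $K$ by factor length: set $G_k=\{[w]\in K:\qbin{w}{v}\equiv0\pmod{\mathfrak{M}}\text{ for all }v\in\Fac(u)\text{ with }1\le|v|\le k\}$, giving $K=G_0\supseteq G_1\supseteq\cdots\supseteq G_{|u|}=\{[\varepsilon]\}$, the last equality because $G_{|u|}$ forces $w\sim_{u,\mathfrak{M}}\varepsilon$. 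For $[w],[y]\in G_{k-1}$ and $|v|=k$, \cref{rem:xy-u} gives $\qbin{yw}{v}\equiv\qbin{y}{v}+\qbin{w}{v}\pmod{\mathfrak{M}}$, since the middle terms involve shorter factors (which vanish) and $q^{|v|\,|w|}=1$ on $K$; thus $[w]\mapsto(\qbin{w}{v}\bmod\mathfrak{M})_{|v|=k}$ is a homomorphism $G_{k-1}\to(\mathbb{K},+)^{N_k}$ with kernel exactly $G_k$, so every quotient $G_{k-1}/G_k$ is an abelian $p$-group. Consequently $\#K$ is a power of $p$ and $\#G$ divides $\per(q)$ times a power of $p$.

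The main obstacle is the \emph{exponent of $p$}. The naive filtration only bounds $G_{k-1}/G_k$ by $(\#\mathbb{K})^{N_k}$, yielding $\per(q)\cdot p^{d(\#\Fac(u)-1)}$ rather than the claimed $\per(q)\cdot p^{|u|}$. Reaching the sharp bound requires showing that the successive quotients are much smaller than the ambient $(\mathbb{K},+)^{N_k}$, exploiting the rigidity of $q$-binomials recorded in \cref{cor:degg} (prescribed valuations, and unit leading and trailing coefficients) and the linear dependencies among $\qbin{w}{v}$ for overlapping factors $v$. I expect this dimension count — pinning the total contribution of the unipotent part to exactly $p^{|u|}$ — to be the delicate step of the argument.
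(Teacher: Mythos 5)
Your argument for group-ness is a genuinely different route from the paper's, and as far as I can check it is correct. The paper builds no representation at all: it proves the single identity $[w]^{\per(q)p^{|u|}}=[\varepsilon]$ for every $w\in A^*$, by induction on the length of the factor $v$ of $u$ using \cref{cor:general} (for letters, the geometric sum $\sum_{i=0}^{\per(q)p-1}q^{i|w|}$ splits into $p$ copies of $\sum_{i=0}^{\per(q)-1}q^{i|w|}$ and vanishes in $\mathbb{F}_p$; in the inductive step all mixed factorizations of $v$ die modulo $\mathfrak{M}$, leaving $p\qbin{w^{\per(q)p^k}}{v}\equiv 0$), and this identity simultaneously yields invertibility of every class and the divisibility claim. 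Your triangular matrix $M(w)_{s,t}=q^{|s||w|-|s||t|+|t|^2}\qbin{w}{s^{-1}t}$ does satisfy $M(xy)=M(x)M(y)$: I verified that the exponent bookkeeping against \cref{rem:xy-u} works out exactly, that every intermediate prefix $r$ with $s\preceq r\preceq t$ stays in $\Fac(u)$, and that $M$ separates classes precisely according to the Case~1 description in \cref{rem:all_letters}; so $G$ embeds in $GL_n(\mathbb{K})$ and is a group. Likewise your length homomorphism and filtration correctly give $\#G=\per(q)\cdot p^m$ for some $m$.

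The step you flag as the remaining delicate point is, however, misdiagnosed, in two ways. First, the sharp bound on the \emph{cardinality} of $G$ that you set out to prove is simply false: in \cref{exa:equiv} (where $q$ is a unit, $\per(q)=2$, and $u=01$ contains all letters, so $\equiv_{u,\mathfrak{M}}$ coincides with $\sim_{u,\mathfrak{M}}$) the group has $64$ elements, while $\per(q)\cdot p^{|u|}=2\cdot 2^2=8$. The statement must be read---as the paper's proof makes plain with its opening line ``we will prove that $[w]^{\per(q)p^{|u|}}=[\varepsilon]$, which suffices''---as a bound on the order of each \emph{element}, i.e.\ on the exponent of $G$; no dimension count of the filtration quotients can recover a cardinality bound that fails. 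Second, your own machinery already proves this exponent bound, so nothing delicate is missing: each quotient $G_{k-1}/G_k$ embeds into the elementary abelian group $(\mathbb{K},+)^{N_k}$, hence has exponent $p$, so for $g\in K=G_0$ one gets $g^p\in G_1$, then $g^{p^2}\in G_2$, and finally $g^{p^{|u|}}\in G_{|u|}=\{[\varepsilon]\}$; combining with $[w]^{\per(q)}\in K$ gives $[w]^{\per(q)p^{|u|}}=[\varepsilon]$ for all $w$, which is exactly the paper's conclusion and is all that is used downstream (in \cref{cor:lvrm} only the fact that $G$ is a $p$-group matters, which follows from $p$-power element orders via Cauchy's theorem, or directly from your $\#G=\per(q)\cdot p^m$ with $\per(q)=p^t$).
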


\begin{proof}
We will prove that $[w]^{\per(q)p^{|u|}}=[\varepsilon]$ for each $w \in A^*$, which suffices for the claim. Note that the length of $w^{\per(q)p^{|u|}}$ is a multiple of $\per(q)$ and thus we trivially have $|w^{\per(q)p^{|u|}}|\equiv 0\pmod{\per(q)}$. Since, for every non-empty factor $v$ of~$u$, $\qbin{\varepsilon}{v}=0$, we have to show that \[\qbin{w^{\per(q)p^{|u|}}}{v}\equiv 0\pmod{\mathfrak{M}}.\]
Let $v$ be a non-empty factor of $u$. We prove by induction on $|v|$ that, for all words $w$ and $j\ge 0$,   
\[\qbin{w^{\per(q)p^{|v|+j}}}{v}\equiv 0 \pmod{\mathfrak{M}}.\]
It is enough to prove it for $j=0$. Indeed, replacing $w$ with $w^p$,
\[(w^p)^{\per(q)p^{|v|}}=w^{\per(q)p^{|v|}.p}=w^{\per(q)p^{|v|+1}}.\]
If $v=a\in A$, we make use of \cref{cor:general}
\[\qbin{w^{\per(q)p}}{a}= \sum_{i=0}^{\per(q)p-1} q^{i|w|} \qbin{w}{a}.\]
Since $q$ is a unit (i.e., $\ind(q)=0$), we deduce that 
\[\sum_{i=0}^{\per(q)p-1} q^{i|w|}
= \sum_{i=0}^{\per(q)-1} \sum_{j=0}^{p-1} q^{(\per(q)j+i)|w|} \equiv \sum_{i=0}^{\per(q)-1} p\, q^{i|w|}\equiv0\pmod{\mathfrak{M}}.\]
Assume that the result holds for words of length up to $k\ge 1$ and consider a word $v$ of length $k+1$. From \cref{cor:general}, we have
\[\qbin{w^{\per(q)p^{k+1}}}{v}=\sum_{\substack{v=v_1\cdots v_p\\ v_i\in A^*}} q^{ \sum_{i=1}^{p-1} |v_i|((p-i)\per(q)p^k|w|-|v_{i+1}\cdots v_p|)} \qbin{w^{\per(q)p^k}}{v_1}\cdots \qbin{w^{\per(q)p^k}}{v_p}\]
and, by induction hypothesis, if one of the $v_i$'s in a factorization of $v$ has a length such that $1\le |v_i|\le k$, then
\[\qbin{w^{\per(q)p^k}}{v_i}\equiv 0\pmod{\mathfrak{M}}.\]
So the only non-zero terms modulo~$\mathfrak{M}$ correspond to factorizations where all the $v_i$'s are $\varepsilon$ except one which is $v$. We get 
\[\qbin{w^{\per(q)p^{k+1}}}{v}\equiv\sum_{j=0}^{p-1} q^{ j|v|\per(q)p^k|w|} \qbin{w^{\per(q)p^k}}{v}\equiv p \qbin{w^{\per(q)p^k}}{v} \equiv 0 \pmod{\mathfrak{M}}.\]
\end{proof}

\begin{corollary}\label{cor:lvrm}
  Let $v$ be a word and $\mathfrak{M}=a(q-1)^d$ for some integer $d\ge 1$ and non-zero $a\in\mathbb{F}_p$.
  The language $L_{v,\mathfrak{R},\mathfrak{M}}$ given in \cref{eq:plang} is a $p$-group language.
\end{corollary}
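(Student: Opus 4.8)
The plan is to exhibit a finite $p$-group recognizing $L_{v,\mathfrak{R},\mathfrak{M}}$ in the sense of \cref{def:recognizable}, namely the quotient of $A^*$ by the congruence $\equiv_{v,\mathfrak{M}}$ from \cref{prop:coarsest.cong}, taking the distinguished word there to be $v$ itself. Everything will be assembled from results already proved.

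First I would invoke \cref{prop:order_of_q_power_of_p}: since $\mathfrak{M} = a(q-1)^d$ with $a \in \mathbb{F}_p^\star$ and $d \geq 1$, the monomial $q$ is a unit of $\mathbb{K} = \mathbb{F}_p[q]/\langle\mathfrak{M}\rangle$ (that is, $\ind(q) = 0$) and $\per(q) = p^t$ for some $t > 0$. This is exactly the hypothesis of \cref{thm:group_qinvertible}, which therefore asserts that $G := A^*/{\equiv_{v,\mathfrak{M}}}$ is a group whose order divides $\per(q)\cdot p^{|v|} = p^{t+|v|}$. A finite group of prime-power order $p^{t+|v|}$ is a $p$-group, so $G$ is a finite $p$-group, and the canonical projection $\mu\colon A^* \to G$, $w \mapsto [w]$, is a monoid morphism onto a $p$-group.

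Next I would verify that $L_{v,\mathfrak{R},\mathfrak{M}}$ is a union of $\equiv_{v,\mathfrak{M}}$-classes. This is immediate: the congruence $\equiv_{v,\mathfrak{M}}$ refines $\sim_{v,\mathfrak{M}}$, and since $v$ is a factor of itself, $w_1 \equiv_{v,\mathfrak{M}} w_2$ forces $\qbin{w_1}{v} \equiv \qbin{w_2}{v} \pmod{\mathfrak{M}}$. Hence the defining condition $\qbin{w}{v} \equiv \mathfrak{R} \pmod{\mathfrak{M}}$ depends only on the class $[w]$. Setting $S := \mu(L_{v,\mathfrak{R},\mathfrak{M}}) \subseteq G$, we obtain $L_{v,\mathfrak{R},\mathfrak{M}} = \mu^{-1}(S)$, so $G$ recognizes $L_{v,\mathfrak{R},\mathfrak{M}}$ and the language is a $p$-group language by definition.

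There is no genuine obstacle here; the corollary is essentially a repackaging of \cref{prop:order_of_q_power_of_p} and \cref{thm:group_qinvertible}. The only point requiring a moment's care is recognizability rather than the mere group structure of the quotient: one must note that $v \in \Fac(v)$, so that the single coefficient $\qbin{\cdot}{v}$ governing membership in $L_{v,\mathfrak{R},\mathfrak{M}}$ is indeed constant on $\equiv_{v,\mathfrak{M}}$-classes, which is precisely what lets us realize the language as the preimage $\mu^{-1}(S)$.
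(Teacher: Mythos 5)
Your proposal is correct and follows essentially the same route as the paper: invoke \cref{prop:order_of_q_power_of_p} to get $\ind(q)=0$ and $\per(q)=p^t$, apply \cref{thm:group_qinvertible} to conclude that $A^*/{\equiv_{v,\mathfrak{M}}}$ is a group of order dividing $p^{t+|v|}$, and recognize $L_{v,\mathfrak{R},\mathfrak{M}}$ as $\mu^{-1}(S)$ via the projection morphism. Your only addition is to spell out why the defining condition is constant on congruence classes (since $v\in\Fac(v)$ and $\equiv_{v,\mathfrak{M}}$ refines $\sim_{v,\mathfrak{M}}$), a step the paper dispatches with ``it is clear.''
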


\begin{proof}
By \cref{prop:order_of_q_power_of_p}, we have $\ind(q) = 0$ and $\per(q) = p^t$ for some $t \geq 0$ (and clearly we must have $p^t \geq d$).
Thanks to \cref{thm:group_qinvertible}, the order of the group $G=A^*/{\equiv_{v,\mathfrak{M}}}$ divides $p^{t+|v|}$, and is thus a $p$-group.
  
Let $\mathfrak{R}\in\mathbb{F}_p[q]$ be a polynomial of degree less than $\deg(M)$.  Consider the projection morphism $\pi:A^*\to G$ that maps a word $x$ to its equivalence class $[x]$. Take $S\subset G$ be the set of classes defined by $[y]\in S$ if and only if \[\qbin{y}{v}\equiv\mathfrak{R}\pmod{\mathfrak{M}}.\]
  It is clear that $\pi^{-1}(S)=L_{v,\mathfrak{R},\mathfrak{M}}$.
\end{proof}

The DFA $\mathcal{M}$ whose set of states is $G=A^*/{\equiv_{v,\mathfrak{M}}}$, with transition function $\delta \colon G\times A\to G$ defined by $\delta([y],a)=[ya]$, initial state $[\varepsilon]$ and set of final states $\pi^{-1}(S)$ as in the previous proof, accepts the language $L_{v,\mathfrak{R},\mathfrak{M}}$. Since $G$ is a group, this DFA is a permutation automaton (i.e., its transition semigroup is a group). A quotient of a permutation automaton being a permutation automaton, the minimal automaton of $L_{v,\mathfrak{R},\mathfrak{M}}$ is a permutation automaton.

\begin{example}
  Continuing \cref{exa:equiv} and applying the procedure described above, the minimization of the DFA is depicted in \cref{fig:min_aut}. For $\mathfrak{R}$ being $0$, $1$, $q+1$ or $q$, the structure of the minimal automaton does not change but one has to choose the four states on the first $\{1,2,3,4\}$, resp.~second $\{5,6,7,8\}$, third $\{9,10,11,12\}$ or fourth layer $\{13,14,15,16\}$ to be the final states.
  \begin{figure}[h!tb]
    \begin{minipage}{.60\linewidth}
  \centering
\begin{tikzpicture}[scale=.6,every node/.style={circle,minimum width=.5cm,inner sep=1pt}]
\node (1) at (0,0) [draw,circle] {\scriptsize{$1$}};
\node (2) at (3,0) [draw,circle] {\scriptsize{$2$}};
\node (3) at (6,0) [draw,circle] {\scriptsize{$3$}};
\node (4) at (3,-3) [draw,circle] {\scriptsize{$6$}};
\node (5) at (9,0) [draw,circle] {\scriptsize{$4$}};
\node (6) at (6,-6) [draw,circle] {\scriptsize{$11$}};
\node (7) at (6,-3) [draw,circle] {\scriptsize{$7$}};
\node (8) at (3,-6) [draw,circle] {\scriptsize{$10$}};
\node (9) at (9,-9) [draw,circle] {\scriptsize{$16$}};
\node (10) at (9,-6) [draw,circle] {\scriptsize{$12$}};
\node (11) at (9,-3) [draw,circle] {\scriptsize{$8$}};
\node (12) at (3,-9) [draw,circle] {\scriptsize{$14$}};
\node (13) at (0,-9) [draw,circle] {\scriptsize{$13$}};
\node (14) at (0,-6) [draw,circle] {\scriptsize{$9$}};
\node (15) at (0,-3) [draw,circle] {\scriptsize{$5$}};
\node (16) at (6,-9) [draw,circle] {\scriptsize{$15$}};
  \draw[->] (1) edge node[below] {\scriptsize{$0$}} (2);
  \draw [->] (1) edge[out=70,in=110,looseness=8] node[above] {\scriptsize{$1$}} (1); 
  \draw[->] (2) edge node[below] {\scriptsize{$0$}} (3); 
  \draw[->] (2) edge node[left] {\scriptsize{$1$}} (4); 
  \draw[->] (3) edge node[below] {\scriptsize{$0$}} (5); 
  \draw[->] (3) edge[out=290,in=70] node[pos=.15, right] {\scriptsize{$1$}} (6); 
  \draw[->] (4) edge node[below] {\scriptsize{$0$}} (7); 
  \draw[->] (4) edge node[left] {\scriptsize{$1$}} (8); 
  \draw[->] (5) edge[out=160,in=20] node[pos=.15, above] {\scriptsize{$0$}} (1); 
  \draw[->] (5) edge[out=290,in=70] node[right] {\scriptsize{$1$}} (9); 
  \draw[->] (6) edge node[below] {\scriptsize{$0$}} (10); 
  \draw[->] (6) edge[out=110,in=250] node[pos=.3, left] {\scriptsize{$1$}} (3); 
  \draw[->] (7) edge node[below] {\scriptsize{$0$}} (11);
  \draw [->] (7) edge[out=-70,in=-110,looseness=8] node[below] {\scriptsize{$1$}} (7); 
  \draw[->] (8) edge node[below] {\scriptsize{$0$}} (6); 
  \draw[->] (8) edge node[left] {\scriptsize{$1$}} (12); 
  \draw[->] (9) edge[out=160,in=20] node[pos=.15, above] {\scriptsize{$0$}} (13); 
  \draw[->] (9) edge node[left] {\scriptsize{$1$}} (10); 
  \draw[->] (10) edge[out=160,in=20] node[pos=.15, above] {\scriptsize{$0$}} (14); 
  \draw[->] (10) edge node[left] {\scriptsize{$1$}} (11); 
  \draw[->] (11) edge[out=160,in=20] node[pos=.15, above] {\scriptsize{$0$}} (15); 
  \draw[->] (11) edge node[left] {\scriptsize{$1$}} (5); 
  \draw[->] (12) edge node[below] {\scriptsize{$0$}} (16); 
  \draw[->] (12) edge[out=70,in=290] node[pos=.15, right] {\scriptsize{$1$}} (2); 
  \draw[->] (13) edge node[below] {\scriptsize{$0$}} (12); 
  \draw[->] (13) edge[out=110,in=250] node[pos=.15, left] {\scriptsize{$1$}} (15); 
  \draw[->] (14) edge node[below] {\scriptsize{$0$}} (8);
  \draw [->] (14) edge[out=70,in=110,looseness=8] node[above] {\scriptsize{$1$}} (14); 
  \draw[->] (15) edge node[below] {\scriptsize{$0$}} (4); 
  \draw[->] (15) edge[out=290,in=70] node[pos=.15, right] {\scriptsize{$1$}} (13); 
  \draw[->] (16) edge node[below] {\scriptsize{$0$}} (9);
  \draw [->] (16) edge[out=-70,in=-110,looseness=8] node[below] {\scriptsize{$1$}} (16);
  \draw [->] (-1,0) -- (1);
\end{tikzpicture}
\end{minipage}
\begin{minipage}{.25\linewidth}
  \[\begin{array}{c|c}
    \mathfrak{R} & \text{final states}\\
    \hline
    0&1,2,3,4\\
    1&5,6,7,8\\
    q+1&9,10,11,12\\
    q&13,14,15,16\\
  \end{array}\]
\end{minipage}
\caption{The structure of the minimal automaton of $L_{01,\mathfrak{R},q^2+1}$.}
  \label{fig:min_aut}
\end{figure}
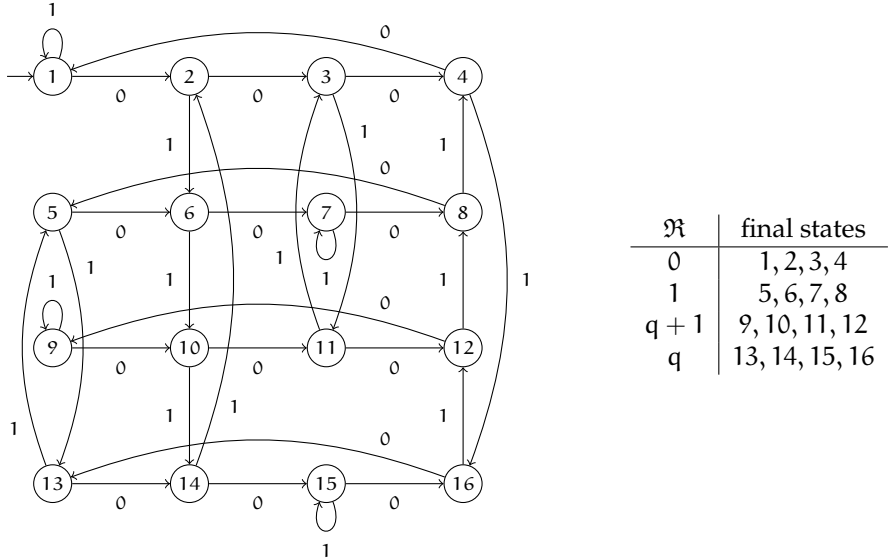
In \cref{fig:min_aut}, we see how $0$ and $1$ act as permutations of the set of states. These two permutations may be used as generators of the considered group.
\end{example}

We have a direct generalization of Eilenberg's theorem characterizing $p$-group languages.
\begin{theorem}
  Let $p$ be a prime and $\mathfrak{M} = a(q-1)^d$ with $d\ge 1$ an integer and a non-zero $a\in\mathbb{F}_p$. A language is a $p$-group language if and only if it is
  a Boolean combination of languages of the form
  \[
  L_{v,\mathfrak{R},\mathfrak{M}}=\left\{u\in A^*\mid \qbin{u}{v}\equiv \mathfrak{R}\pmod{\mathfrak{M}}\right\}.
  \]
\end{theorem}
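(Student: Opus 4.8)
Since this is an equivalence, I would prove the two implications separately, and most of the work is already available. For the implication that every Boolean combination of languages $L_{v,\mathfrak{R},\mathfrak{M}}$ is a $p$-group language, I would invoke \cref{cor:lvrm}, which already guarantees that each individual $L_{v,\mathfrak{R},a(q-1)^d}$ is a $p$-group language. The only extra ingredient needed is that the class of $p$-group languages is closed under Boolean operations: if $L_1,L_2$ are recognized by finite $p$-groups $G_1,G_2$, then $L_1\cup L_2$ and $L_1\cap L_2$ are recognized by the $p$-group $G_1\times G_2$, and the complement of $L_1$ is recognized by $G_1$ with complemented accepting set. Thus any Boolean combination of the $L_{v,\mathfrak{R},\mathfrak{M}}$ is again a $p$-group language.

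For the converse, the plan is to reduce to the classical form of Eilenberg's theorem \cite[Thm.~VIII.10.1]{Eilenberg1976}, which states that every $p$-group language is a Boolean combination of the languages $L_{v,r,p}$ of \cref{eq:pl1}. It then suffices to express each $L_{v,r,p}$ as a finite (hence Boolean) combination of languages $L_{v,\mathfrak{R},\mathfrak{M}}$; I would in fact exhibit it as a disjoint union, as announced in the introduction.

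The heart of the argument is the link between reduction modulo $(q-1)^d$ and evaluation at $q=1$. Writing $\qbin{u}{v}\equiv\mathfrak{R}\pmod{\mathfrak{M}}$ with $\deg(\mathfrak{R})<d$, we have a decomposition $\qbin{u}{v}=\mathfrak{Q}\cdot(q-1)^d+\mathfrak{R}$ in $\mathbb{F}_p[q]$, using that $\langle a(q-1)^d\rangle=\langle(q-1)^d\rangle$ since $a$ is a unit. Evaluating at $q=1$ annihilates the first summand, so that $\mathfrak{R}(1)=\qbin{u}{v}(1)=\binom{u}{v}\bmod p$, the last equality being the fact that the $q$-deformation evaluated at $1$ returns the classical coefficient. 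Therefore $u\in L_{v,r,p}$ precisely when the residue of $\qbin{u}{v}$ modulo $\mathfrak{M}$ takes the value $r$ at $q=1$, yielding
\[
L_{v,r,p}=\bigsqcup_{\substack{\deg(\mathfrak{R})<d\\ \mathfrak{R}(1)=r}} L_{v,\mathfrak{R},\mathfrak{M}}.
\]
Since the languages $L_{v,\mathfrak{R},\mathfrak{M}}$ partition $A^*$ as $\mathfrak{R}$ runs over all $p^d$ residues of degree less than $d$, this is a genuine finite disjoint union, and its index set is non-empty for every $r$ because the constant polynomial $\mathfrak{R}=r$ already satisfies $\mathfrak{R}(1)=r$. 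This completes the converse.

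I do not expect a serious obstacle at this stage: essentially all of the difficulty has been absorbed into \cref{cor:lvrm}, which rests on \cref{thm:group_qinvertible} and \cref{prop:order_of_q_power_of_p}. The present theorem is an assembly, and the only points requiring minor care are the verification that $G_1\times G_2$ is again a $p$-group in the forward direction, and the specialization $\mathfrak{R}(1)=\binom{u}{v}\bmod p$ together with the finiteness and disjointness of the above union in the converse.
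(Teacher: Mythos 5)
Your proposal is correct and follows essentially the same route as the paper's proof: the converse reduces to Eilenberg's classical theorem by writing each $L_{v,r,p}$ as the disjoint union of the languages $L_{v,\mathfrak{R},\mathfrak{M}}$ over residues $\mathfrak{R}$ with $\mathfrak{R}(1)=r$ (using that reduction modulo $a(q-1)^d$ commutes with evaluation at $q=1$), while the forward direction combines \cref{cor:lvrm} with closure of $p$-group languages under Boolean operations. Your only addition is to spell out that closure via the product group $G_1\times G_2$, which the paper simply recalls as known.
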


\begin{proof}
Since $\mathfrak{M}$ is of the form $a(q-1)^d$, if $\mathfrak{P}\equiv\mathfrak{R}\pmod{\mathfrak{M}}$, then $\mathfrak{P}(1)=\mathfrak{R}(1)$. Since a classical binomial coefficient $\binom{u}{v}$ of words  is a $q$-binomial $\binom{u}{v}_1$ evaluated at $1$, we have $u\in L_{v,r,p}$ if and only if $u\in L_{v,\mathfrak{R},\mathfrak{M}}$ for some $\mathfrak{R}$ such that $\mathfrak{R}(1)\equiv r\pmod{p}$. 

This means that every language $L_{v,r,p}:=\{u\in A^*\mid \binom{u}{v}\equiv r\pmod{p}\}$ is the disjoint union of languages  \cref{eq:plang} of the form $L_{v,\mathfrak{R},\mathfrak{M}}$ for polynomials $\mathfrak{R}$ such that $\mathfrak{R}(1)=r$ modulo~$p$. 
More precisely, we observe that amongst the $p^d$ polynomials $\mathfrak{R}$ of degree less than $d$ in $\mathbb{F}_p[q]$, exactly $p^{d-1}$ are such that $\mathfrak{R}(1)$ is a given element in $\mathbb{F}_p$; the sum modulo~$p$ of the elements in a $d$-tuple of coefficients in $\mathbb{F}_p$ is completely determined by one of the coefficients when the other $d-1$ ones are fixed.

 We may now conclude using Eilenberg's theorem. If $L$ is a $p$-group language, it  is a Boolean combination of languages of the form $L_{v,r,p}$ which are themselves finite unions of languages of the form $L_{v,\mathfrak{R},\mathfrak{M}}$.

  The converse comes from \cref{cor:lvrm}. Recall that any Boolean combination of $p$-group languages is a $p$-group language.
\end{proof}

 As an example, for $p=2$ and $t=1$, since $q+1$ at $q=1$ evaluates to $0$ modulo~$2$
\[L_{v,0,2}=\left\{u\in A^*\mid \qbin{u}{v}\equiv 0\pmod{q^2-1}\right\}\cup
\left\{u\in A^*\mid \qbin{u}{v}\equiv q+1\pmod{q^2-1}\right\}\]
and since $q$ at $q=1$ evaluates to $1$ modulo~$2$
\[L_{v,1,2}=\left\{u\in A^*\mid \qbin{u}{v}\equiv 1\pmod{q^2-1}\right\}\cup
\left\{u\in A^*\mid \qbin{u}{v}\equiv q\pmod{q^2-1}\right\}.\]

In \cref{fig:min_aut2}, we have depicted the minimal automaton for $L_{01,r,2}$. For $r=0$ (resp. $r=1$), the final states are $\{1,2\}$ (resp. $\{3,4\}$). This automaton is a quotient of the one depicted in \cref{fig:min_aut} where the states $1,3,9,11$ are mapped to $1$, resp. $2,4,10,12$ to $2$, resp. $6,8,14,16$ to $3$ and $5,7,13,15$ to $4$. 
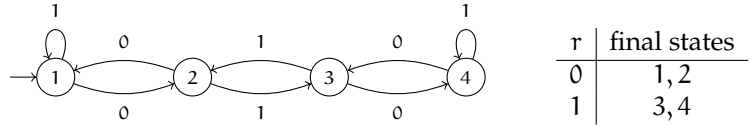
\begin{figure}[h!tb]
  \centering
  \begin{minipage}{.5\linewidth}
 \centering
    \begin{tikzpicture}[scale=.6,every node/.style={circle,minimum width=.5cm,inner sep=1pt}]
\node (1) at (0,0) [draw,circle] {\scriptsize{$1$}};
\node (2) at (3,0) [draw,circle] {\scriptsize{$2$}};
\node (3) at (6,0) [draw,circle] {\scriptsize{$3$}};
\node (5) at (9,0) [draw,circle] {\scriptsize{$4$}};
  \draw[->] (1) edge[out=-20,in=-160] node[below] {\scriptsize{$0$}} (2);
  \draw [->] (1) edge[out=70,in=110,looseness=8] node[above] {\scriptsize{$1$}} (1); 
  \draw[->] (2) edge[out=-20,in=-160] node[below] {\scriptsize{$1$}} (3); 
  \draw[->] (2) edge[out=160,in=20] node[above] {\scriptsize{$0$}} (1); 
  \draw[->] (3) edge[out=160,in=20] node[above] {\scriptsize{$1$}} (2); 
  \draw[->] (3) edge[out=-20,in=-160] node[below] {\scriptsize{$0$}} (5);
  \draw[->] (5) edge[out=160,in=20] node[above] {\scriptsize{$0$}} (3);
  \draw [->] (5) edge[out=70,in=110,looseness=8] node[above] {\scriptsize{$1$}} (5); 
  \draw [->] (-1,0) -- (1);
\end{tikzpicture}
\end{minipage}
\begin{minipage}{.2\linewidth}
  \[\begin{array}{c|c}
    r & \text{final states}\\
    \hline
    0&1,2\\
    1&3,4\\
  \end{array}\]
\end{minipage}
\caption{The structure of the minimal automaton of $L_{01,r,2}$.}
  \label{fig:min_aut2}
\end{figure}

\section{Conclusions}
In this paper, we have proposed a $q$-deformation of the binomial coefficients of words. Our natural definition seems quite reasonable, since it has enabled us to obtain the analogue of many classical properties or formulas. Moreover, as shown by \cref{cor:reconstruction}, these $q$-deformations of binomial coefficients of words contain much richer information than the coefficients obtained for $q=1$. However, it should be noted that not all classical properties extend in an obvious way. For example, our proposed family of $q$-deformations of the infiltration product does not yield an analogue to Chen--Fox--Lyndon relation \cref{eq:cfl}. Finally, it is interesting to note that as an application of our developments, we obtain a refinement of Eilenberg's theorem characterizing $p$-group languages. One also obtains other group languages. It could be interesting to investigate their properties further.

\section*{Acknowledgments}
We thank S.~Morier-Genoud and C.~Reutenauer for useful references. We warmly thank the anonymous referees. They have made a significant contribution to improving the presentation of this text and have proposed changes to simplify a number of proofs. In particular, \cref{cor:ref} and \cref{rem:ref} were suggested by one of the referees.

\section*{Data availability}

Data sharing is not applicable to this article as no datasets were generated or analyzed.

\section*{Author Contributions}

Each of the three authors have equally contributed to the manuscript. 

\section*{Conflict of interest statement}

The authors declare no conflict of interest.

\bibliographystyle{plainurl}
\bibliography{./bibliography}

\end{document}